\RequirePackage{snapshot}       
\documentclass[a4paper, 11pt]{article}

\usepackage[english]{babel}
\usepackage{microtype}
\usepackage{array,multirow} 
\usepackage{mathtools} 
\usepackage{amsthm} 
\usepackage{graphicx}
\usepackage{rotating}
\usepackage{adjustbox} 
\usepackage{tikz}
\usetikzlibrary{calc}
\usetikzlibrary{cd} 
\usetikzlibrary{shapes.geometric}
\usetikzlibrary{trees}
\usetikzlibrary{positioning}
\usetikzlibrary{arrows.meta}
\usepackage[alwaysadjust]{paralist}
\usepackage[%
breaklinks,
colorlinks=true,
linkcolor=black,
anchorcolor=black,
citecolor=black,
filecolor=black,
menucolor=black,
urlcolor=black]{hyperref}
\urlstyle{same}
\usepackage[font={small}]{caption}
\captionsetup[table]{labelsep=period}
\usepackage[charter]{mathdesign}
\usepackage[textwidth=365pt, textheight=594pt, vmarginratio=2:3]{geometry}



\usepackage[T3,T1]{fontenc}
\DeclareSymbolFont{tipa}{T3}{cmr}{m}{n}
\DeclareMathAccent{\invbreve}{\mathalpha}{tipa}{16}
\DeclareMathAccent{\mybreve}{\mathalpha}{tipa}{8}

\providecommand{\noopsort}[1]{}

\newcommand{\sminfty}{\makebox{\small$\infty$}}
\newcommand{\R}{\mathbb{R}}
\newcommand{\Q}{\mathbb{Q}}
\newcommand{\C}{\mathbb{C}}
\newcommand{\Z}{\mathbb{Z}}

\newcommand{\im}{\operatorname{Im}}
\newcommand{\trace}{\operatorname{trace}}
\newcommand{\length}{\operatorname{length}}
\newcommand{\group}[1]{\mathit{#1}}
\newcommand{\RP}{{\R}{P}}
\newcommand{\GLTZ}{\group{GL}_2(\Z)}
\newcommand{\PGLTZ}{\group{PGL}_2(\Z)}
\newcommand{\PSLTZ}{\group{PSL}_2(\Z)}
\newcommand{\SLTZ}{\group{SL}_2(\Z)}
\newcommand{\SLTR}{\group{SL}_2(\R)}
\newcommand{\PSLTR}{\group{PSL}_2(\R)}

\theoremstyle{plain}
\newtheorem{lemma}{Lemma}[section]
\newtheorem{theorem}[lemma]{Theorem}
\newtheorem{corollary}[lemma]{Corollary}
\newtheorem{conjecture}[lemma]{Conjecture}

\theoremstyle{definition}
\newtheorem{definition}[lemma]{Definition}

\newtheorem{question}[lemma]{Question}
\newtheorem{remark}[lemma]{Remark}

\newtheorem{examples}[lemma]{Examples}

\title{The worst approximable rational numbers}

\author{Boris Springborn}
\date{}

\begin{document}

\maketitle

\begin{abstract}
  We classify and enumerate all rational numbers with approximation
  constant at least $\frac{1}{3}$ using hyperbolic geometry. Rational
  numbers correspond to geodesics in the modular torus with both ends
  in the cusp, and the approximation constant measures how far they
  stay out of the cusp neighborhood in between. Compared to the
  original approach, the geometric point of view eliminates the need
  to discuss the intricate symbolic dynamics of continued fraction
  representations, and it clarifies the distinction between the two
  types of worst approximable rationals: (1) There is a plane forest
  of \emph{Markov fractions} whose denominators are Markov
  numbers. They correspond to simple geodesics in the modular torus
  with both ends in the cusp. (2) For each Markov fraction, there are
  two infinite sequences of \emph{companions}, which correspond to
  non-simple geodesics with both ends in the cusp that do not
  intersect a pair of disjoint simple geodesics, one with both ends in
  the cusp and one closed.
  
  \vspace{0.5\baselineskip}\noindent%
  \emph{MSC (2010).} 
  10F20, 30F60
  \\
  \emph{Key words and phrases.}
  Approximation constant,
  Diophantine approximation,
  Markov equation,
  modular torus
\end{abstract}

\section{Introduction}
\label{sec:intro}

For any real number $x$, the \emph{approximation constant}
\begin{equation}
  \label{eq:C}
  C(x)\;=\;\inf_{\frac{a}{b}\;\in\;\Q\,\setminus\,\{x\}}\;
  b^{2}\cdot\,\Big|\,x\;-\;\frac{a}{b}\,\Big|\;,
\end{equation}
measures how well $x$ can be approximated by rational
numbers~$\frac{a}{b}\not=x$. Excluding~$x$ from the admissible
approximants~$\frac{a}{b}$ makes $C(x)$ strictly positive for all
rational numbers $x$. (This is easy to see and will be discussed
shortly.) Intuitively, the larger the approximation constant $C(x)$ of
a rational number $x\in\Q$, the more isolated the rational number $x$
is among the other rational numbers.

An irrational number $x$ is sometimes called \emph{badly approximable}
if $C(x)>0$, and this is the case if and only if the sequence of
partial denominators $a_{k}$ of the continued fraction expansion
\begin{equation*}
  x\;=\;a_{0}
  + \cfrac{1}{a_{1}
    +\cfrac{1}{a_{2} + \ldots}}
\end{equation*}
is bounded (see, e.g.,~\cite[Prop.~1.32]{Aigner13}
or~\cite[{\S}10.8]{Hardy08}).

By contrast (as stated before), $C(x)>0$ for all rational
numbers $x=\frac{p}{q}$. Moreover, the nonzero infimum
in~\eqref{eq:C} is attained for some fraction~$\frac{a}{b}$ with
denominator~$b\leq q$. Because, on the one hand, if
$b>q$ and $\frac{a}{b}\not=\frac{p}{q}$, then
\begin{equation*}
  b^{2}\cdot\Big|\,\frac{p}{q}\;-\;\frac{a}{b}\,\Big|\;=\;
  \frac{b}{q}\cdot\big|\,pb\;-\;qa\,\big| > 1. 
\end{equation*}
On the other hand, if $b=1$ and $a$ is a closest integer to $x$
(except $x$ itself if $x\in\Z$) then
\begin{equation*}
  b^{2}\cdot\big|\,x-\frac{a}{b}\,\big| = |\,x-a\,| \leq 1
\end{equation*}
with equality if and only if $x\in\Z$.

This shows not only that all rational numbers are badly approximable
in the sense that $C(x)>0$, but also that the very worst approximable
rational numbers are the integers $x\in\Z$ with $C(x)=1$. They are
followed by the half-integers $x\in \Z+\frac{1}{2}$ with
$C(x)=\frac{1}{2}$, which are best approximated by the two nearest
integers. This can be seen as a striking confirmation of Hardy and
Wright's general observation: ``From the point of view of rational
approximation, \emph{the simplest numbers are the worst}''
\cite[p.~209, emphasis in original]{Hardy08}.

This article is about a geometric method to classify and enumerate the
rational numbers $x$ with approximation constant
$C(x)\geq\frac{1}{3}$. Previously, the real numbers with
$C(x)>\frac{1}{3}$ were classified in terms of their continued
fraction representations by Flahive~\cite{gbur78} for rational~$x$ and
by Gurwood~\cite{gurwood76} for irrational~$x$. Flahive's
classification apparently extends to include rational numbers with
$C(x)=\frac{1}{3}$.

All of this is closely related to the classification of \emph{Markov
  irrationals} and \emph{Markov forms}, i.e., of irrational numbers
$x$ with \emph{Lagrange number} $L(x)>\frac{1}{3}$ and of indefinite
quadratic forms $f(x,y)=Ax^{2}+2Bxy+Cy^{2}$ with \emph{Markov
  constant} $M(f)>\frac{2}{3}$, where
\begin{gather}
  \label{eq:Lagrange_number}
  L(x)=\liminf_{b\rightarrow\infty}\,\big(b^{2}\cdot\min_{a\in\Z}\,\big|x-\tfrac{a}{b}\big|\big)
  \\\intertext{and}
  \label{eq:Markov_constant}
  M(f)\;=\;
  \inf_{(a,b)\,\in\,\Z^{2}\setminus\{(0,0)\}}
  \frac{f(a,b)}{\sqrt{-\det\,f}}\,.
\end{gather}
The monographs~\cite{Aigner13,Cassels57,Cusick89} and their
bibliographies are excellent resources for that theory. (Often the
Lagrange number is defined as the reciprocal value, $1/L(x)$.)

While the Lagrange number $L(x)$ and the Markov constant $M(f)$ are
invariant under the actions of $\GLTZ$, the approximation number
$C(x)$ is only invariant under the group of \emph{$\Z$-affine
  transformations}
\begin{equation}
  \label{eq:Z-affine}
  x\;\longmapsto\; \pm\,x\;+\;n\qquad (n\;\in\;\Z).
\end{equation}
In fact, the irrational numbers with $C(x)>\frac{1}{3}$ classified by
Gurwood are special representatives of the $\GLTZ$-classes of Markov
irrationals. In this article, we focus on the worst approximable
rational numbers.

The connection between continued fractions and Diophantine
approximation on the one hand, and hyperbolic geometry on the other
hand is well established~\cite{bonahon09,einsiedler11,hatcher}. Both
Markov forms and Markov irrationals correspond to geodesics in the
modular torus~\cite{Aigner13, cohn55, cohn71, gorshkov77, haas86,
  series85}: Markov forms correspond to simple closed geodesics, and
Markov irrationals correspond to simple geodesics with one end in the
cusp and the other end spiraling into a simple closed geodesic. The
Lagrange number $L(x)$ and the Markov constant $M(f)$ correspond to
distances between geodesics and the Ford circles interpreted as
horocycles. This makes it possible to classify the Markov irrationals
and Markov forms using purely geometric
methods~\cite{Springborn_Markov}. In this article, we take the same
geometric approach to classify the worst approximable rational
numbers.

Figure~\ref{fig:ford}
\begin{figure}[p]
  \centering
  \input{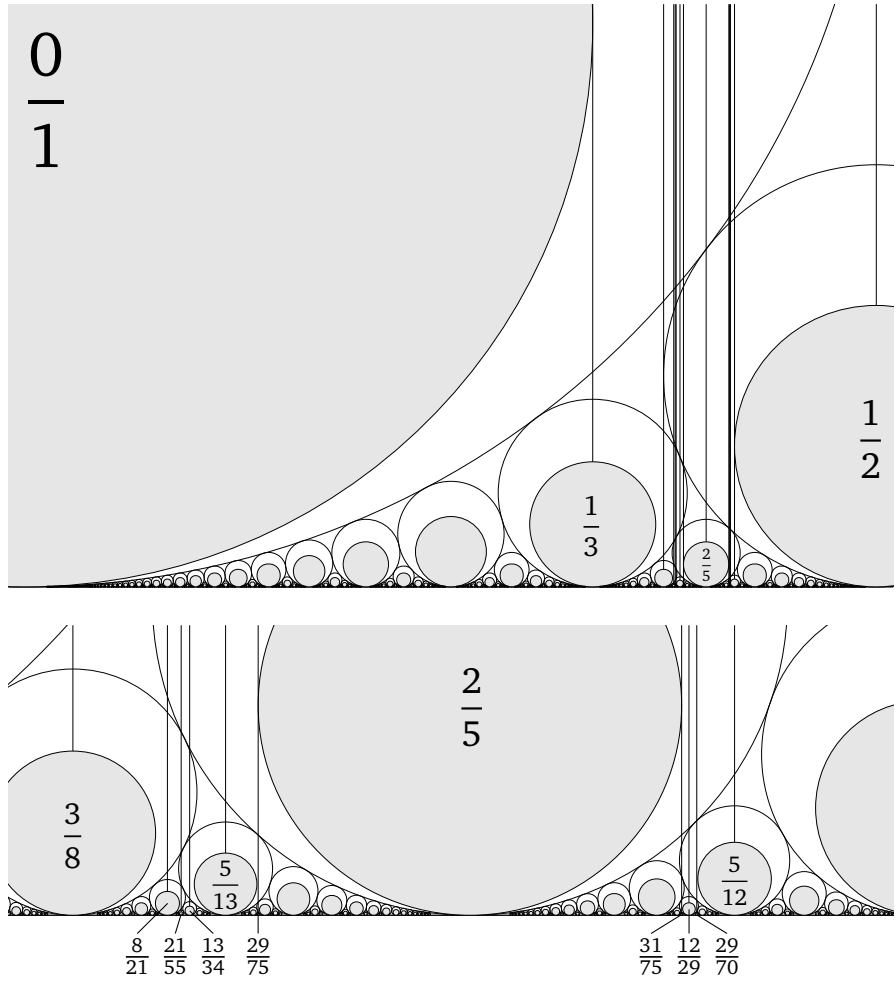}
  \caption{Ford circles scaled by $\frac{2}{3}$. Those whose highest
    point is visible from $\sminfty$ belong to rational numbers with
    approximation constant $\geq\frac{1}{3}$. A few of them are
    indicated in the figure (compare
    Figure~\ref{fig:Markov_fraction_tree}
    and Table~\ref{tab:companions}). The bottom figure shows a detail of the
    top figure.}
  \label{fig:ford}
\end{figure}
\begin{figure}[p]
  \small
  \begin{minipage}[b]{0.65\textwidth}
    \hspace*{\fill}%
    \renewcommand{\arraystretch}{1.5}%
    \begin{tabular}{c}
      \multirow{2}{*}{\quad$\dfrac{1}{1}$\quad} \\ \\
      \hline
      \multirow{2}{*}{\quad$\dfrac{0}{1}$\quad} \\ \\
    \end{tabular}
    \hspace{\fill}%
    \begin{tabular}{cc}
      \multirow{2}{*}{\quad$\dfrac{p_{2}}{q_{2}}$\quad} & \\\cline{2-2}
                                                        & \multicolumn{1}{|c}{\multirow{2}{*}{\quad$\dfrac{p'}{q'}$\quad}}\\\cline{1-1}
      \multirow{2}{*}{\quad$\dfrac{p_{1}}{q_{1}}$\quad} & \multicolumn{1}{|c}{}\\\cline{2-2}
                                                        & 
    \end{tabular}
    \hspace*{\fill}%
  \end{minipage}%
  \hspace{\fill}%
  \begin{minipage}[b]{0.35\textwidth}
    \begin{equation}
      \label{eq:pqprime}
      \begin{split}
        p' \;&=\;
        \frac{p_{1}\,q_{1}\,+\,p_{2}\,q_{2}}{p_{2}\,q_{1}\,-\,p_{1}\,q_{2}}\\[\medskipamount]
        q' \;&=\;
        \frac{q_{1}^{2}\,+\,q_{2}^{2}}{p_{2}\,q_{1}\,-\,p_{1}\,q_{2}}
      \end{split}
    \end{equation}
  \end{minipage}
  
  \caption{Root edge and generating rule for the tree of Markov
    fractions in~$[0,1]$, of which one half is shown in
    Figure~\ref{fig:Markov_fraction_tree}}
  \label{fig:Markov_fraction_tree_0_1}
\end{figure}
\begin{figure}[p]
  \vspace{-\baselineskip}
  \centering
  \begingroup
  \renewcommand{\arraystretch}{.65}
%
%
\begingroup
\newcommand{\I}[1]{\multicolumn{1}{|c}{#1}}
\newcommand{\mr}[1]{\multirow{2}{*}{#1}}
\begin{tabular}{ccccccl}
               &                    &                    &                     &                     &                     & \mr{{\qquad}0.5}\\
               &                    &                    &                     &                     &                     & \\\cline{6-6}
               &                    &                    &                     &                     & \I{\mr{$\frac{2378}{5741}$}} & \mr{{\qquad}0.4142135516\ldots}\\\cline{5-5}
               &                    &                    &                     & \I{}               & \I{}               & \\\cline{6-6}
               &                    &                    &                     & \I{\mr{$\frac{408}{985}$}} &                     & \mr{{\qquad}0.4142131979\ldots}\\\cline{4-4}
               &                    &                    & \I{}               & \I{}               &                     & \\\cline{6-6}
               &                    &                    & \I{}               & \I{}               & \I{\mr{$\frac{206855}{499393}$}} & \mr{{\qquad}0.4142128544\ldots}\\\cline{5-5}
               &                    &                    & \I{}               &                     & \I{}               & \\\cline{6-6}
               &                    &                    & \I{\mr{$\frac{70}{169}$}} &                     &                     & \mr{{\qquad}0.4142011834\ldots}\\\cline{3-3}
               &                    & \I{}              & \I{}               &                     &                     & \\\cline{6-6}
               &                    & \I{}              & \I{}               &                     & \I{\mr{$\frac{3087111}{7453378}$}} & \mr{{\qquad}0.4141895124\ldots}\\\cline{5-5}
               &                    & \I{}              & \I{}               & \I{}               & \I{}               & \\\cline{6-6}
               &                    & \I{}              & \I{}               & \I{\mr{$\frac{6089}{14701}$}} &                     & \mr{{\qquad}0.4141895109\ldots}\\\cline{4-4}
               &                    & \I{}              &                     & \I{}               &                     & \\\cline{6-6}
               &                    & \I{}              &                     & \I{}               & \I{\mr{$\frac{529673}{1278818}$}} & \mr{{\qquad}0.4141895093\ldots}\\\cline{5-5}
               &                    & \I{}              &                     &                     & \I{}               & \\\cline{6-6}
               &                    & \I{\mr{$\frac{12}{29}$}}&                     &                     &                     & \mr{{\qquad}0.4137931034\ldots}\\\cline{2-2}
               & \I{}              & \I{}              &                     &                     &                     & \\\cline{6-6}
               & \I{}              & \I{}              &                     &                     & \I{\mr{$\frac{1354498}{3276509}$}} & \mr{{\qquad}0.4133966975\ldots}\\\cline{5-5}
               & \I{}              & \I{}              &                     & \I{}               & \I{}               & \\\cline{6-6}
               & \I{}              & \I{}              &                     & \I{\mr{$\frac{15571}{37666}$}} &                     & \mr{{\qquad}0.4133966972\ldots}\\\cline{4-4}
               & \I{}              & \I{}              & \I{}               & \I{}               &                     & \\\cline{6-6}
               & \I{}              & \I{}              & \I{}               & \I{}               & \I{\mr{$\frac{20226717}{48928105}$}} & \mr{{\qquad}0.4133966970\ldots}\\\cline{5-5}
               & \I{}              & \I{}              & \I{}               &                     & \I{}               & \\\cline{6-6}
               & \I{}              & \I{}              & \I{\mr{$\frac{179}{433}$}} &                     &                     & \mr{{\qquad}0.4133949191\ldots}\\\cline{3-3}
               & \I{}              &                    & \I{}               &                     &                     & \\\cline{6-6}
               & \I{}              &                    & \I{}               &                     & \I{\mr{$\frac{3472225}{8399329}$}} & \mr{{\qquad}0.4133931412\ldots}\\\cline{5-5}
               & \I{}              &                    & \I{}               & \I{}               & \I{}               & \\\cline{6-6}
               & \I{}              &                    & \I{}               & \I{\mr{$\frac{2673}{6466}$}} &                     & \mr{{\qquad}0.4133931333\ldots}\\\cline{4-4}
               & \I{}              &                    &                     & \I{}               &                     & \\\cline{6-6}
\mr{$\frac{1}{2}$} & \I{}              &                    &                     & \I{}               & \I{\mr{$\frac{39916}{96557}$}} & \mr{{\qquad}0.4133931253\ldots}\\\cline{5-5}
               & \I{}              &                    &                     &                     & \I{}               & \\\cline{6-6}
               & \I{\mr{$\frac{2}{5}$}} &                    &                     &                     &                     & \mr{{\qquad}0.4}\\\cline{1-1}
               & \I{}              &                    &                     &                     &                     & \\\cline{6-6}
\mr{$\frac{0}{1}$}  & \I{}              &                    &                     &                     & \I{\mr{$\frac{16725}{43261}$}} & \mr{{\qquad}0.3866068745\ldots}\\\cline{5-5}
               & \I{}              &                    &                     & \I{}               & \I{}               & \\\cline{6-6}
               & \I{}              &                    &                     & \I{\mr{$\frac{1120}{2897}$}} &                     & \mr{{\qquad}0.3866068346\ldots}\\\cline{4-4}
               & \I{}              &                    & \I{}               & \I{}               &                     & \\\cline{6-6}
               & \I{}              &                    & \I{}               & \I{}               & \I{\mr{$\frac{651838}{1686049}$}} & \mr{{\qquad}0.3866067949\ldots}\\\cline{5-5}
               & \I{}              &                    & \I{}               &                     & \I{}               & \\\cline{6-6}
               & \I{}              &                    & \I{\mr{$\frac{75}{194}$}} &                     &                     & \mr{{\qquad}0.3865979381\ldots}\\\cline{3-3}
               & \I{}              & \I{}              & \I{}               &                     &                     & \\\cline{6-6}
               & \I{}              & \I{}              & \I{}               &                     & \I{\mr{$\frac{1701181}{4400489}$}} & \mr{{\qquad}0.3865890813\ldots}\\\cline{5-5}
               & \I{}              & \I{}              & \I{}               & \I{}               & \I{}               & \\\cline{6-6}
               & \I{}              & \I{}              & \I{}               & \I{\mr{$\frac{2923}{7561}$}} &                     & \mr{{\qquad}0.3865890755\ldots}\\\cline{4-4}
               & \I{}              & \I{}              &                     & \I{}               &                     & \\\cline{6-6}
               & \I{}              & \I{}              &                     & \I{}               & \I{\mr{$\frac{113922}{294685}$}} & \mr{{\qquad}0.3865890696\ldots}\\\cline{5-5}
               & \I{}              & \I{}              &                     &                     & \I{}               & \\\cline{6-6}
               & \I{}              & \I{\mr{$\frac{5}{13}$}} &                     &                     &                     & \mr{{\qquad}0.3846153846\ldots}\\\cline{2-2}
               &                    & \I{}              &                     &                     &                     & \\\cline{6-6}
               &                    & \I{}              &                     &                     & \I{\mr{$\frac{19760}{51641}$}} & \mr{{\qquad}0.3826416994\ldots}\\\cline{5-5}
               &                    & \I{}              &                     & \I{}               & \I{}               & \\\cline{6-6}
               &                    & \I{}              &                     & \I{\mr{$\frac{507}{1325}$}}  &                     & \mr{{\qquad}0.3826415094\ldots}\\\cline{4-4}
               &                    & \I{}              & \I{}               & \I{}               &                     & \\\cline{6-6}
               &                    & \I{}              & \I{}               & \I{}               & \I{\mr{$\frac{51709}{135137}$}}  & \mr{{\qquad}0.3826413195\ldots}\\\cline{5-5}
               &                    & \I{}              & \I{}               &                     & \I{}               & \\\cline{6-6}
               &                    & \I{}              & \I{\mr{$\frac{13}{34}$}}  &                     &                     & \mr{{\qquad}0.3823529411\ldots}\\\cline{3-3}
               &                    &                    & \I{}               &                     &                     & \\\cline{6-6}
               &                    &                    & \I{}               &                     & \I{\mr{$\frac{3468}{9077}$}}  & \mr{{\qquad}0.3820645587\ldots}\\\cline{5-5}
               &                    &                    & \I{}               & \I{}               & \I{}               & \\\cline{6-6}
               &                    &                    & \I{}               & \I{\mr{$\frac{34}{89}$}}  &                     & \mr{{\qquad}0.3820224719\ldots}\\\cline{4-4}
               &                    &                    &                     & \I{}               &                     & \\\cline{6-6}
               &                    &                    &                     & \I{}               & \I{\mr{$\frac{89}{233}$}}  & \mr{{\qquad}0.3819742489\ldots}\\\cline{5-5}
               &                    &                    &                     &                     & \I{}               & \\\cline{6-6}
               &                    &                    &                     &                     &                     & \mr{{\qquad}0.0}\\
               &                    &                    &                     &                     &                     & \\
\end{tabular}
\endgroup
%
%

  \endgroup
  \caption{Markov fractions in the interval
    $[0,\frac{1}{2}]$. Numerical values are shown in the right
    column.
  }
  \label{fig:Markov_fraction_tree}
\end{figure}
illustrates the geometric interpretation of the approximation
constant. The extended real line $\RP^{1}=\R\cup\{\infty\}$ is the
ideal boundary of the hyperbolic plane $H^{2}$ in the half-plane
model. For an irrational number $x\in\R\setminus\Q$ and a bound $c>0$,
the approximation constant satisfies the inequality $C(x)\geq c$ if
and only if $x$ is visible from $\sminfty$ behind the Ford circles
after they are scaled by $2c$. For a rational number, $x\in\Q$, the
condition is that $x$ is visible after the Ford circle at~$x$ itself
is removed, or equivalently, that the highest point of the Ford circle
at~$x$ is visible

The classical approach to Diophantine approximation based on continued
fractions can also be interpreted geometrically, because continued
fractions encode the symbolic dynamics of geodesics in the Farey
triangulation. But our geometric approach goes further. In effect, we
trivialize the symbolic dynamics by considering not only one but all
ideal triangulations of the modular torus.

\section{Overview}
\label{sec:overview}

\subsection{Classification of the worst approximable rational numbers}
\label{sec:overview_classify}

The main result of this article is the classification of the worst
approximable rational numbers in terms of \emph{Markov fractions} and
their \emph{companions}. These special numbers can be described in an
elementary way as follows.

\paragraph{The forest of Markov fractions.}

The Markov fractions between zero and one are naturally associated
with the faces of a plane binary tree. Its root edge and generating
rule are shown in Figure~\ref{fig:Markov_fraction_tree_0_1}. For
example, the first branching after the root edge produces the Markov
fraction~$\frac{1}{2}$. At each node, the numbers $p'$ and $q'$
obtained by equations~\eqref{eq:pqprime} are coprime integers, so the
Markov fraction $\frac{p'}{q'}$ is reduced.

Figure~\ref{fig:Markov_fraction_tree} shows the first few levels of
the subtree containing the Markov fractions in the
interval~$[0,\frac{1}{2}]$. There is a symmetric subtree containing
the Markov fractions in~$[\frac{1}{2},1]$, obtained by a reflection
and replacing each Markov fraction $x$ with $1-x$. Likewise, the
Markov fractions $x+n$ in any other integer interval $[n,n+1]$ are
arranged in a similar tree, and all these trees form a plane binary
forest. The generating rule is the same for all trees, and the root
edges are infinite parallel rays separating faces associated with
consecutive integers (see Figure~\ref{fig:roots}).
\begin{figure}
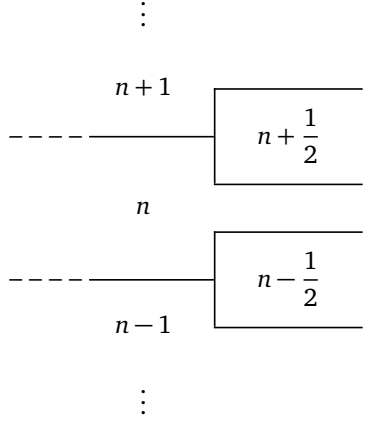

  \begin{center}
    \small%
    \renewcommand{\arraystretch}{1.5}%
    \begin{tabular}{r@{}cc}
      & $\vdots$ & \\
      & \multirow{2}{*}{\quad$n+1$\quad} & \\
      \cline{3-3}
      \multirow{2}{*}{\raisebox{-.2pt}{--\;--\;--\;--\;}} & & \multicolumn{1}{|c}{\multirow{2}{*}{\quad$n+\dfrac{1}{2}$\quad}}\\
      \cline{2-2}
      &\multirow{3}{*}{\quad$n$\quad}
               & \multicolumn{1}{|c}{}\\\cline{3-3}
      &         & \\\cline{3-3}
      \multirow{2}{*}{\raisebox{-.2pt}{--\;--\;--\;--\;}} & & \multicolumn{1}{|c}{\multirow{2}{*}{\quad$n-\dfrac{1}{2}$\quad}}\\
      \cline{2-2}
      &\multirow{2}{*}{\quad$n-1$\quad}
               & \multicolumn{1}{|c}{}\\\cline{3-3}
      &         &  \\
      &$\vdots$ &
    \end{tabular}
  \end{center}
  \caption{Roots of the forest of Markov fractions}
  \label{fig:roots}
\end{figure}

Defining Markov fractions by this plane forest is simple but not very
illuminating, and it would be a bad starting point from which to develop
the theory. So in Section~\ref{sec:Markov_fractions}, we define the
Markov fractions in terms of \emph{rational Markov triples}
(Definition~\ref{sec:Markov_fractions}) and then construct the forest
of Markov fractions in Lemma~\ref{lem:children_parents}.

At each node in the plane forest, the three Markov fractions form a
rational Markov triple
$\big(\frac{p_{1}}{q_{1}},\frac{p'}{q'},\frac{p_{2}}{q_{2}}\big)$. The
three denominators $q_{1}$, $q'$, $q_{2}$ are Markov numbers forming a
Markov triple. The rational Markov triples at the vertices are
also~\emph{centered}, i.e., the middle denominator $q'$ is largest
(see Definition~\ref{def:Markov_triple_types}).  All centered rational
Markov triples arise in this way, except for the integer triples
$\big(n-1,n,n+1\big)\in\Z^{3}$ which correspond to pairs of
neighboring root edges. The best approximating rational numbers for
the middle Markov fraction $\frac{p'}{q'}$ are the two Markov
fractions $\frac{p_{1}}{q_{1}}$ and $\frac{p_{2}}{q_{2}}$ (see
Theorem~\ref{thm:Markov_best_approximants}).

\paragraph{The sequences of companions.} For each Markov fraction
$\frac{p}{q}$, there are two infinite sequences of companions,
$(\gamma^{+}_{k}(\frac{p}{q}))_{k\geq 2}$ and
$(\gamma^{-}_{k}(\frac{p}{q}))_{k\geq 2}$ defined as follows.

\begin{definition}
  \label{def:companion}
  The \emph{right companions} and \emph{left companions} of a Markov
  fraction~$\frac{p}{q}$ are the rational numbers
  $\gamma^{+}_{k}(\frac{p}{q})$ and $\gamma^{-}_{k}(\frac{p}{q})$
  for $k\geq 2$, defined by
  \begin{equation}
    \label{eq:companions}
    \gamma^{\pm}_{k}\,\Big(\frac{p}{q}\Big)\;=\;
    \frac{p}{q}\;\pm\;
    \frac{u_{k-1}}{q\,u_{k}},
  \end{equation}
  where $(u_{k})$ is the integer sequence defined by the second
  order linear recursion
  \begin{equation}
    \label{eq:un}
    u_{0}\;=\;0,\qquad u_{1}\;=\;1,\qquad
    u_{k+1}\;=\;3q\,u_{k}\;-\;u_{k-1}\,.
  \end{equation}
\end{definition}

Note that $\gamma^{\pm}_{1}\,\Big(\frac{p}{q}\Big)=\frac{p}{q}$ is the
Markov fraction itself, so the sequences of companions start with
$k=2$ (see also Remark~\ref{rem:companions_index}).

The first companions $\gamma^{\pm}_{2}(\frac{p}{q})$ are best
approximated by the associated Markov fraction~$\frac{p}{q}$ and no
other rational number. All other companions,
$\gamma^{\pm}_{k}(\frac{p}{q})$ with $k>2$, are best approximated by
two rational numbers: the associated Markov fraction~$\frac{p}{q}$ and
the previous companion in the sequence,
$\gamma^{\pm}_{k-1}(\frac{p}{q})$ (see
Theorem~\ref{thm:companion_approximation}).

\bigskip\noindent%
We can now state the main theorem:

\begin{theorem}
  \label{thm:classify}
  For a rational number $\frac{p}{q}\in\Q$, the following two statements are
  equivalent:

  \smallskip
  \begin{compactenum}[(i)]
  \item $C(\frac{p}{q})\geq\frac{1}{3}$
  \item The number $\frac{p}{q}$ is either a Markov fraction or a
    companion of a Markov fraction.
  \end{compactenum}\smallskip
  Furthermore, $C(\frac{p}{q})=\frac{1}{3}$ if and only if $x$ is the
  first left or right companion $\gamma^{\pm}_{2}(\frac{p}{q})$ of a
  Markov fraction $x=\frac{p}{q}$.
\end{theorem}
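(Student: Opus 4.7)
The plan is to convert the statement into a question about geodesics in the modular torus. A rational number $x=\frac{p}{q}$ corresponds to a bi-infinite geodesic $\gamma_{x}$ with both ends in the cusp, and by the Ford-circle interpretation sketched in the introduction, the approximation constant $C(x)$ measures how deeply $\gamma_{x}$ penetrates into the canonical horocyclic cusp neighborhood. The inequality $C(x)\geq\frac{1}{3}$ will be equivalent to $\gamma_{x}$ staying outside (or just touching the boundary of) the horocyclic neighborhood obtained by scaling the canonical one by $\frac{2}{3}$, and $C(x)=\frac{1}{3}$ will correspond to exact tangency. I would first fix this dictionary as a lemma so that the rest of the argument is purely topological and geometric on the modular torus.

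For the direction (ii)$\Rightarrow$(i), I would treat the two cases separately. If $x$ is a Markov fraction, then by definition $\gamma_{x}$ is a simple cusp-to-cusp geodesic, hence an edge of some ideal triangulation of the modular torus. Using the Markov equation that governs the decorated edge lengths of such a triangulation, one verifies directly that $\gamma_{x}$ stays strictly outside the scaled horocycle, giving $C(x)>\frac{1}{3}$. If $x$ is a companion of a Markov fraction $y$, then by the definition in Section~\ref{sec:companions_alg}, $\gamma_{x}$ avoids a pair of disjoint simple geodesics (one simple cusp-to-cusp, namely $\gamma_{y}$, and one closed). These two disjoint simple geodesics bound an embedded subsurface of the modular torus in which $\gamma_{x}$ is confined, and the hyperbolic geometry of this subsurface constrains the excursions of $\gamma_{x}$ to the cusp by exactly the quantity $\frac{1}{3}$; the first left and right companions are precisely those where the excursion becomes tangent to the horocycle.

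For the converse (i)$\Rightarrow$(ii), I would argue contrapositively on the topology of $\gamma_{x}$. If $\gamma_{x}$ is simple, then by the classification of simple cusp-to-cusp geodesics in the once-punctured torus (the Markov tree), $x$ must be a Markov fraction. If $\gamma_{x}$ is not simple, the task is to show that $C(x)\geq\frac{1}{3}$ forces the self-intersections to be ``trapped'' inside a subsurface cut off by a pair of disjoint simple geodesics of the kind appearing in the companion definition, so that $x$ is a companion of the associated Markov fraction. To carry this out I would choose an ideal triangulation of the modular torus adapted to $\gamma_{x}$ (rather than the Farey triangulation) and follow $\gamma_{x}$ triangle by triangle: every time $\gamma_{x}$ crosses an edge whose horocyclic gap is too small, $\gamma_{x}$ is pushed closer than $\frac{1}{3}$ to the cusp, which is excluded. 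A flip calculus then lets one identify a simple closed geodesic and a simple cusp-to-cusp geodesic that together enclose the non-simple part of $\gamma_{x}$.

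The main obstacle is this last step: showing that the constraint $C(x)\geq\frac{1}{3}$ is strong enough to localize all self-intersections of $\gamma_{x}$ inside the correct embedded subsurface, and that the resulting bounding pair is disjoint and of the right topological type. This is where the advantage of working with arbitrary ideal triangulations, rather than only with the Farey triangulation, becomes essential, and it is also where the borderline case $C(x)=\frac{1}{3}$ has to be pinned down to identify the first companions. I expect the proof to be essentially a bookkeeping of horocyclic gaps along a well-chosen triangulation, with the Markov equation and the geometric definition of companion doing the conceptual work that in the classical approach was carried by the intricate symbolic dynamics of continued fractions.
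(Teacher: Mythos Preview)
Your outline for (ii)$\Rightarrow$(i) is broadly aligned with the paper's strategy, though the paper does considerably more work than your sketch suggests. For Markov fractions and for companions separately, the paper proves exact formulas for $C(x)$ (Theorems~\ref{thm:Markov_best_approximants} and~\ref{thm:companion_approximation}) by first using geometric separation lemmas to cut down the set of candidate best approximants to a finite list, and then checking that list explicitly. Your phrase ``the hyperbolic geometry of this subsurface constrains the excursions of $\gamma_{x}$ to the cusp by exactly the quantity~$\tfrac{1}{3}$'' hides a genuine computation: one must rule out that some Ford circle \emph{outside} the cylinder comes closer to $g_{x}$ than the ones inside, and this is where Lemmas~\ref{lem:nearest_to_companion} and~\ref{lem:nearest_to_companion2} do real work.

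For (i)$\Rightarrow$(ii), your route and the paper's diverge substantially. You propose to follow $\gamma_{x}$ through an adapted triangulation and use a flip calculus to locate a bounding pair of disjoint simple geodesics. The paper instead proves a McShane-type covering statement: the intervals $I_{\frac{p}{q}}$ of~\eqref{eq:Ipq}, one around each Markov fraction, cover all of~$\Q$ and have disjoint interiors (Lemmas~\ref{lem:Ipq} and~\ref{lem:Ipq_cover}). Any rational~$y$ that is not a Markov fraction or companion therefore lies strictly between two consecutive companions $\gamma^{\pm}_{k}(x)$ and $\gamma^{\pm}_{k+1}(x)$ of some Markov fraction~$x$, and a two-line Ford-circle computation using~\eqref{eq:u_conserved} shows that the scaled Ford circles at these two points overlap, so $C(y)<\tfrac{1}{3}$. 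This bypasses any case analysis on the self-intersection pattern of $\gamma_{y}$.

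The gap in your plan is the step you yourself flag: showing that $C(x)\geq\tfrac{1}{3}$ forces the self-intersections of $\gamma_{x}$ into a single cylinder bounded by a simple closed geodesic and a simple cusp-to-cusp geodesic. This is not a bookkeeping exercise; it is essentially equivalent to proving the interval covering Lemma~\ref{lem:Ipq_cover}, and the paper's proof of that lemma already requires the tree structure of centered rational Markov triples and a limiting argument to exclude the case of a path with infinitely many left and right turns. Your flip-calculus idea does not obviously supply this, and without it you cannot conclude that the bounding pair exists, is disjoint, or is unique.
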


The proof of Theorem~\ref{thm:classify} is presented in
Section~\ref{sec:pf_classify}. It relies on the characterization of
Markov fractions and their companions in terms of geodesics in the
modular torus (see Section~\ref{sec:geometric_pov}).

Because a number-theoretic question deserves a number-theoretic answer,
we first define the Markov fractions and their companions
algebraically in Section~\ref{sec:algebraic} and then derive the
geometric characterization in Section~\ref{sec:geometric_pov}. A more
geometrically minded reader might want to take the geometric
characterization as the definition and read
Sections~\ref{sec:algebraic} and~\ref{sec:geometric_pov} in reverse
order.

Yet another characterization is presented in
Section~\ref{sec:triangle_paths}, where Markov numbers and their
companions are described by a Fibonacci-like recursion along paths in
a triangle lattice. This represents the combinatorial point of view on
Markov theory and illuminates the link between Markov numbers and
Christoffel words~\cite{Reutenauer19}.

Interest in classifying the worst approximable rational numbers was
initially motivated by the \emph{uniqueness conjecture for Markov
  numbers}~\cite{Aigner13,McShane_Parlier08,Zagier82}. This is the
proposition --- yet to be proved or refuted --- that every Markov
number is the maximum of exactly one ordered Markov triple. Indeed,
the uniqueness conjecture for Markov numbers is equivalent to the
following uniqueness conjecture for Markov fractions:

\begin{conjecture}
  \label{con:markov_fractions}
  For each Markov number~$q$, the interval $[0,\frac{1}{2}]$ contains
  at most one Markov fraction with denominator $q$ .
\end{conjecture}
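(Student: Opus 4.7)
The plan is to translate Conjecture~\ref{con:markov_fractions} into a geometric uniqueness statement about simple geodesics in the modular torus $T$ using the correspondence of Section~\ref{sec:geometric_pov}, and then attack that statement by induction along the Markov tree.

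First, I would use the correspondence to identify each Markov fraction $\frac{p}{q}\in[0,\frac{1}{2}]$ with a pair $(\gamma,\delta)$ consisting of a simple cusp-to-cusp geodesic $\gamma$ and a disjoint simple closed geodesic $\delta$ in $T$, where the Markov number attached to $\delta$ equals $q$. The restriction to $[0,\frac{1}{2}]$ selects one representative per orbit of the $\Z$-affine group, which matches the action on $T$ generated by the hyperelliptic involution together with the parabolic fixing the cusp. I would then show that $\gamma$ is determined by $\delta$: cutting $T$ along $\delta$ yields a cusped annulus, which carries a unique isotopy class of essential simple arc from cusp to cusp. Consequently, Markov fractions in $[0,\frac{1}{2}]$ with denominator $q$ are in bijection with mapping-class orbits of simple closed geodesics in $T$ whose associated Markov number equals~$q$, and the conjecture reduces to injectivity of the trace function $\trace M_\delta = 3q$ on these orbits.

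Second, to establish this injectivity I would attempt an induction along the Markov tree generated by the Vieta mutations $(a,b,c)\mapsto(a,b,3ab-c)$ on ordered triples with $a\leq b\leq c$. Assuming two distinct ordered triples $(a_1,b_1,q)$ and $(a_2,b_2,q)$ share the same maximum $q$, the goal is to derive contradictory divisibility or size constraints from the Markov equation $a^{2}+b^{2}+c^{2}=3abc$, possibly combined with finer geometric data coming from the geometric framework of this paper---for instance the lengths of the shorter closed geodesics in the surface obtained by cutting $T$ along~$\delta$, or arithmetic invariants of the Cohn matrix representing $\delta$ beyond its trace.

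The main obstacle is that this final injectivity step is the classical Markov uniqueness conjecture itself, open since Frobenius despite verification in many special cases (prime-power denominators and twice prime-power denominators by Baragar, further classes by Button, Schmutz, Lang--Tan, and others). The hyperbolic-geometric viewpoint developed here clarifies the structural picture but does not by itself supply a new number-theoretic ingredient of the sort that several decades of work have not produced. I therefore expect the injectivity step to be the main, and likely insurmountable, obstacle of the plan: absent a genuinely new input---such as a length-spectrum rigidity statement for the modular torus, or a height bound on Markov triples that bypasses Vieta descent---the plan yields only the conditional statement that Conjecture~\ref{con:markov_fractions} holds for those Markov numbers $q$ for which classical uniqueness is known.
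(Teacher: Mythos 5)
The statement you were asked to prove is in fact stated as a \emph{conjecture} in the paper, and the paper offers no proof of it: Springborn explicitly writes that Conjecture~\ref{con:markov_fractions} is equivalent to the classical uniqueness conjecture for Markov numbers, which is ``yet to be proved or refuted.'' The only argument the paper gives in its vicinity is the remark that one may replace ``at most one'' by ``exactly one,'' which follows from the facts (established in Section~\ref{sec:Markov_fractions}) that Markov fractions come in $\Z$-affine orbits and that every Markov number occurs as a denominator.

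Your analysis correctly diagnoses all of this. You translate the conjecture into a geometric uniqueness statement for simple geodesics in the modular torus, observe that the cusp-to-cusp geodesic is determined (up to the relevant symmetries) by its companion simple closed geodesic, and reduce the problem to injectivity of the trace (equivalently, length) on simple closed geodesics --- which is the hyperbolic-geometric reformulation of the classical Markov uniqueness conjecture. You then correctly flag that this final injectivity step is an open problem since Frobenius, and that the geometric framework of this paper clarifies the structure without supplying a new arithmetic input. That assessment precisely matches the paper's framing. The only genuinely useful refinement would be to note that the bijection with orbits of simple closed geodesics should be taken with respect to the finite isometry group of the modular torus (the source of the sixfold symmetry in Figure~\ref{fig:Markov_tree}) rather than the full mapping class group, and that the reduction from ``pair $(\gamma,\delta)$ with $\gamma$ a cusp arc disjoint from $\delta$'' to ``$\delta$ alone'' is Theorem~\ref{thm:furthest_geodesics}(a): each simple cusp-to-cusp geodesic has a unique disjoint simple closed geodesic, and conversely. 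But these are cosmetic; your proposal correctly identifies both the right geometric reduction and the exact point at which it becomes the unresolved Markov uniqueness conjecture. There is nothing further to push through here, and you are right to stop where you do.
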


One may equivalently write ``exactly one'' instead of ``at most one'',
because the Markov fractions come in $\Z$-affine orbits and every
Markov number appears among the denominators (see
Section~\ref{sec:Markov_fractions}).

\subsection{Limits of infinite paths in the forest}
\label{sec:path_limit}

The forest structure of the set of Markov fractions is useful to study
the set of its accumulation points. But first note that every
accumulation point of the set of Markov fractions is
irrational. Indeed, for $\frac{a}{b}\in\Q$ and any Markov
fraction~$\frac{p}{q}\not=\frac{a}{b}$, we have
$\big|\frac{p}{q}-\frac{a}{b}\big|>\frac{1}{3b^{2}}$ because
$C(\frac{p}{q})>\frac{1}{3}$. So $\frac{a}{b}$ is not an accumulation
point of the set of Markov fractions.

Now consider infinite paths in the tree of Markov fractions between
$0$ and~$1$, starting at the root edge. Each path corresponds to a
sequence of Markov fractions with increasing denominators, which
converges to an irrational number~$x$ with Lagrange number
$L(x)\geq\frac{1}{3}$.

For example, always taking the lower branch in
Figure~\ref{fig:Markov_fraction_tree} generates the sequence
\begin{equation*}
  \frac{1}{2}\,,\; \frac{2}{5}\,,\; \frac{5}{13}\,,\; \frac{13}{34}\,,\;
  \ldots
\end{equation*}
which converges to
\begin{equation*}
  \phi^{-2}\;=\;\frac{1}{2}\,\big(3-\sqrt{5}\big)\;=\;2\,-\,\phi\,,
\end{equation*}
with $\phi=\frac{1}{2}(1+\sqrt{5})$, the golden ratio. This is a Markov
irrational with Lagrange number
$L(2-\phi)=L(\phi)=\frac{1}{\sqrt{5}}>\frac{1}{3}$.

Always taking the upper branch in
Figure~\ref{fig:Markov_fraction_tree} generates the sequence
\begin{equation*}
  \frac{2}{5}\,,\;\frac{12}{29}\,,\;\frac{70}{169}\,,\;\frac{408}{985}\,,\;\ldots
\end{equation*}
with limit $-1+\sqrt{2}$. This is another Markov irrational, with
Lagrange number 
$L(-1+\sqrt{2})=L(\sqrt{2})=\frac{1}{\sqrt{8}}>\frac{1}{3}$.

For these two examples, the limits represent the first two worst
approximable classes of irrational numbers. But there are only a
countable number of $\PGLTZ$-classes of Markov irrationals and an
uncountable number of paths in an infinite binary tree, so these two
examples are exceptional. In fact, the limit $x$ is a Markov
irrational if the path turns always left or always right except for
finitely many branchings. If, on the other hand, there are infinitely
many left turns and infinitely many right turns, then the limit has
Lagrange number $L(x)=\frac{1}{3}$. Before we come back to this in
Section~\ref{sec:inf_paths_cont}, let us use the Stern--Brocot tree to
label Markov fractions.

\subsection{Labeling the Markov fractions in the unit interval}
\label{sec:labeling}

The denominators of Markov fractions are Markov numbers. If we take
the subtree of Markov fractions in the interval $[0,\frac{1}{2}]$
shown in Figure~\ref{fig:Markov_fraction_tree} and replace the
fractions by their denominators alone, we obtain one of the six
symmetric subtrees of the tree of Markov numbers shown in
Figure~\ref{fig:Markov_tree}.
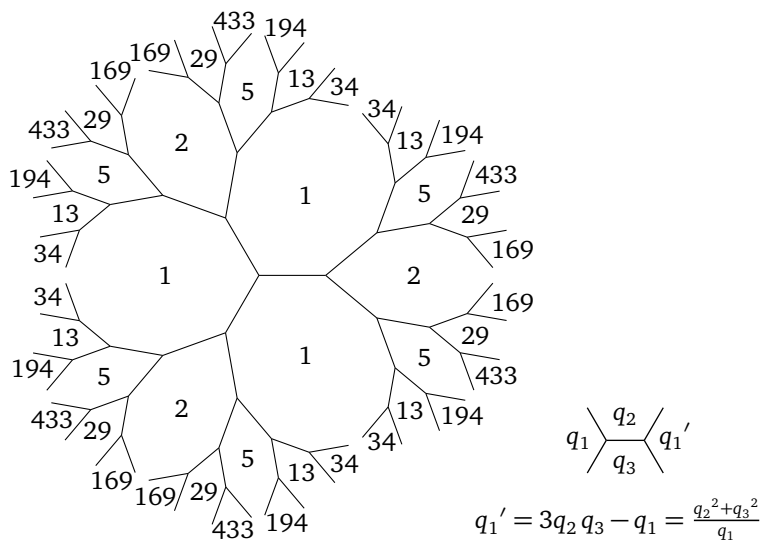
\begin{figure}
  \hspace*{\fill}\begin{tikzpicture}[grow cyclic]
  \tikzstyle{level 1}=[level distance=25pt,sibling angle=120] 
  \tikzstyle{level 2}=[level distance=25pt,sibling angle=80] 
  \tikzstyle{level 3}=[level distance=20pt,sibling angle=60] 
  \tikzstyle{level 4}=[level distance=15pt,sibling angle=60] 
  \tikzstyle{level 5}=[level distance=15pt,sibling angle=60] 
  \tikzstyle{every node}=[font=\small]
  \coordinate
  child foreach \x in {1,2,3}
  {child foreach \x in {1,2}
    {child foreach \x in {1,2}
      {child foreach \x in {1,2}
        {child foreach \x in {1,2}}}}};
  
  \draw (-35pt,0pt) node {1};
  \draw[rotate=120] (-35pt,0pt) node {1};
  \draw[rotate=-120] (-35pt,0pt) node {1};
  
  \draw (58pt,0pt) node {2};
  \draw[rotate=120] (58pt,0pt) node {2};
  \draw[rotate=-120] (58pt,0pt) node {2};
  
  \draw (62pt,31pt) node {5};
  \draw (62pt,-31pt) node {5};
  \draw[rotate=120] (62pt,31pt) node {5};
  \draw[rotate=120] (62pt,-31pt) node {5};
  \draw[rotate=-120] (62pt,31pt) node {5};
  \draw[rotate=-120] (62pt,-31pt) node {5};
  
  \draw (81pt,23pt) node {29};
  \draw (81pt,-23pt) node {29};
  \draw[rotate=120] (81pt,23pt) node {29};
  \draw[rotate=120] (81pt,-23pt) node {29};
  \draw[rotate=-120] (81pt,23pt) node {29};
  \draw[rotate=-120] (81pt,-23pt) node {29};
  
  \draw (56.5pt,51pt) node {13};
  \draw (56.5pt,-51pt) node {13};
  \draw[rotate=120] (56.5pt,51pt) node {13};
  \draw[rotate=120] (56.5pt,-51pt) node {13};
  \draw[rotate=-120] (56.5pt,51pt) node {13};
  \draw[rotate=-120] (56.5pt,-51pt) node {13};
  
  \draw (95pt,10pt) node {169};
  \draw (95pt,-10pt) node {169};
  \draw[xshift=0.5pt,yshift=0pt] [rotate=120] (95pt,10pt) node {169};
  \draw[xshift=-2pt,yshift=-3pt] [rotate=120] (95pt,-10pt) node {169};
  \draw[xshift=0pt,yshift=4pt] [rotate=-120] (95pt,10pt) node {169};
  \draw[xshift=1pt,yshift=0pt] [rotate=-120] (95pt,-10pt) node {169};
  
  \draw (89pt,38pt) node {433};
  \draw (89pt,-38pt) node {433};
  \draw[xshift=-1pt,yshift=-3pt] [rotate=120] (89pt,38pt) node {433};
  \draw[xshift=2.5pt,yshift=0pt] [rotate=120] (89pt,-38pt) node {433};
  \draw[xshift=2pt,yshift=0pt] [rotate=-120] (89pt,38pt) node {433};
  \draw[xshift=0pt,yshift=3.5pt] [rotate=-120] (89pt,-38pt) node {433};
  
  \draw (76pt,53pt) node {194};
  \draw (76pt,-53pt) node {194};
  \draw[xshift=-2pt,yshift=-3pt] [rotate=120]  (76pt,53pt) node {194};
  \draw[xshift=2pt,yshift=1pt] [rotate=120]  (76pt,-53pt) node {194};
  \draw[xshift=2.5pt,yshift=-1pt] [rotate=-120]  (76pt,53pt) node {194};
  \draw[xshift=-1pt,yshift=4pt] [rotate=-120]  (76pt,-53pt) node {194};
  
  \draw (46pt,63pt) node {34};
  \draw (46pt,-63pt) node {34};
  \draw (32pt,71pt) node {34};
  \draw (32pt,-71pt) node {34};
  \draw (-79pt,8pt) node {34};
  \draw (-79pt,-8pt) node {34};
  
  
  
  
\end{tikzpicture}  
  \hspace*{\fill}%
  \makebox[0pt][r]{\small\begin{tikzpicture}
  [
  line width=0.5pt,
  every node/.style={inner sep=0pt, minimum size=0pt,
    line width=0pt}
  ]
  \draw (0,0) node (l) {}
  node [left=0.2] {$q_{1}$}
  --
  node [above=.2] {$q_{2}$}
  node [below=.2] {$q_{3}$}
  +(.5,0) node (r) {}
  node [right=0.2] {$q_{1}\smash{'}$};
  \draw (l) -- +(120:0.5);
  \draw (l) -- +(-120:0.5);
  \draw (r) -- +(60:0.5);
  \draw (r) -- +(-60:0.5);
\end{tikzpicture}
  \hspace*{24pt}\\[.5\baselineskip]
  \hspace*{\fill}\makebox[0pt][r]{\small ${q_{1}}'=3q_{2}\,q_{3}-q_{1}=\frac{{q_{2}}^{2}+{q_{3}}^{2}}{q_{1}}$}
  \caption{The tree of Markov numbers and its generating rule}
  \label{fig:Markov_tree}
\end{figure}
The combinatorial isomorphism with the
Stern--Brocot tree leads to the customary labeling of Markov numbers
by the rational numbers in the interval $[0,1]$. If we do not forget
about the numerators of the Markov fractions, we obtain a labeling of
the Markov fractions in the interval $[0,\frac{1}{2}]$.  For our
purposes, it makes sense to extend the Stern--Brocot tree to contain
all non-negative rational numbers and $\frac{1}{0}=\infty$. That is,
we consider the rooted trivalent plane tree with the following root
edge and generating rule:
\\
\begingroup%
\small \hspace*{\fill}%
\renewcommand{\arraystretch}{1.5}%
\begin{tabular}{c}
  \multirow{2}{*}{\quad$\dfrac{1}{0}$\quad} \\ \\
  \hline
  \multirow{2}{*}{\quad$\dfrac{0}{1}$\quad} \\ \\
\end{tabular}
\hspace{\fill}%
\begin{tabular}{cc}
  \multirow{2}{*}{\quad$\dfrac{p_{2}}{q_{2}}$\quad} & \\\cline{2-2}
                                                    & \multicolumn{1}{|c}{\multirow{2}{*}{\;$\dfrac{p_{1}\,+\,p_{2}}{q_{1}\,+\,q_{2}}$\;}}\\\cline{1-1}
  \multirow{2}{*}{\quad$\dfrac{p_{1}}{q_{1}}$\quad} & \multicolumn{1}{|c}{}\\\cline{2-2}
                                                    & 
\end{tabular}
\hspace*{\fill}%
\endgroup%
\\
The combinatorial isomorphism of rooted plane trees leads to a
bijection
\begin{equation}
  \label{eq:mu}
  \begin{split}
    \mu:\;\Q_{\geq 0}\;\cup\;\{\infty\}\quad
    &\longrightarrow\quad
    \big\{\,\text{Markov fractions}\,\big\}\;\cap\;[0,1]
    \\
    \frac{n}{m}\quad
    &\longmapsto\quad
    \mu_{\frac{n}{m}}\;.
  \end{split}
\end{equation}
The symmetries of the (extended) Stern-Brocot tree and the tree of
Markov fractions in $[0,1]$ lead to the functional equation
$\mu_{\frac{n}{m}}+\mu_{\frac{m}{n}}=1.$ For example:

\medskip
\begin{center}
  \begin{tabular}{@{\quad}c@{\quad}|@{\quad}c@{\quad}|@{\quad}c@{\quad}|@{\quad}c@{\quad}|@{\quad}c@{\quad}|@{\quad}c@{\quad}|@{\quad}c@{\quad}|@{\quad}c@{\quad}|@{\quad}c@{\quad}|@{\quad}c@{\quad}}
    $\dfrac{n}{m}$
    &$\dfrac{0}{1}$
    &$\dfrac{1}{3}$
    &$\dfrac{1}{2}$
    &$\dfrac{2}{3}$
    &$\dfrac{1}{1}$
    &$\dfrac{3}{2}$
    &$\dfrac{2}{1}$
    &$\dfrac{3}{1}$
    &$\dfrac{1}{0}$
    \\
    &&&&&&&&&\\[-2ex]
    \hline
    &&&&&&&&&\\[-2ex]
    $\displaystyle\mu_{\frac{n}{m}}$
    &$\dfrac{0}{1}$
    &$\dfrac{5}{13}$
    &$\dfrac{2}{5}$
    &$\dfrac{12}{29}$
    &$\dfrac{1}{2}$
    &$\dfrac{17}{29}$
    &$\dfrac{3}{5}$
    &$\dfrac{8}{13}$
    &$\dfrac{1}{1}$
  \end{tabular}
\end{center}

\medskip%
The same correspondence $\mu$ also arises from the construction
involving triangle paths in the Eisenstein lattice explained in
Section~\ref{sec:triangle_paths}.

\subsection{Limits of infinite paths (continued)}
\label{sec:inf_paths_cont}

The combinatorial isomorphism between the (extended) Stern--Brocot
tree and the tree of Markov fractions in the interval $[0,1]$ extends
to a bijection between infinite paths starting at the root edges of
either tree, respectively.

Each infinite path in the Stern--Brocot tree generates a sequence of
nonnegative rational numbers that converges to a finite limit or
diverges properly to~$\sminfty$. The limit of a path with infinitely
many left turns and infinitely many right turns is irrational, and
each positive irrational number is the limit for precisely one such
path. Indeed, the sequence of partial denominators of the continued
fraction expansion is the run-length encoding of the sequence of left
and right turns. On the other hand, an infinite path with only
finitely many left or finitely many right turns has a rational limit,
and every positive rational number is the limit for two such
paths. The two corresponding paths in the tree of Markov fractions
have different but $\PGLTZ$-equivalent Markov irrationals as
limits. Thus, the correspondence of infinite paths defines a map
\begin{equation*}
  \widehat{\mu}:\;\R_{\geq 0}\;\cup\;\{\infty\}
  \quad\longrightarrow\quad
  \big\{\,\text{$\PGLTZ$-classes of real numbers with $L\geq\tfrac{1}{3}$}\,\big\}.
\end{equation*}

If $x$ is rational, $\widehat{\mu}(x)$ is a class of Markov
irrationals. Otherwise, $\widehat{\mu}(x)$ is a class of irrationals
with Lagrange number $=\frac{1}{3}$. It is not difficult to see that
the restriction of $\widehat{\mu}$ to $\Q\cap[0,1]$ is a bijection
onto the set of equivalence classes of Markov irrationals. What about
the restriction of $\widehat{\mu}$ to $[0,1]\setminus\Q$? The
following two questions seem to be open:

\begin{question}
  Is every irrational number $x$ with Lagrange number
  $L(x)=\frac{1}{3}$ equivalent to the limit of some infinite path in
  the tree of Markov fractions between $0$ and $1$\,?
\end{question}

\begin{question}
  Do two different infinite paths exist in the subtree containing all
  Markov fractions between $0$ and $\frac{1}{2}$ (see
  Figure~\ref{fig:Markov_fraction_tree}) such that their limits are
  $\PGLTZ$-equivalent with Lagrange number $L=\frac{1}{3}$\,?
\end{question}

\subsection{Companions of Markov fractions}
\label{sec:companions}

Table~\ref{tab:companions}
\begin{table}[p]
  \centering
  \renewcommand{\arraystretch}{2.5}
  {\small
%
%
\begin{adjustbox}{scale=1.0,center,tabular=ll}
$\displaystyle\frac{p}{q}=\gamma^{+}_{1}\left(\frac{p}{q}\right),\quad\gamma^{+}_{2}\left(\frac{p}{q}\right),\quad\gamma^{+}_{3}\left(\frac{p}{q}\right),\quad\ldots$ & $\displaystyle\longrightarrow\quad\lim\;\gamma_{k}\left(\frac{p}{q}\right)$\\[\smallskipamount]
\hline
$\dfrac{0}{1}$,\quad$\dfrac{1}{3}$,\quad$\dfrac{3}{8}$,\quad$\dfrac{8}{21}$,\quad$\dfrac{21}{55}$,\quad$\dfrac{55}{144}$,\quad$\dfrac{144}{377}$,\quad$\dfrac{377}{987}$,\quad$\dfrac{987}{2584}$,\quad\ldots&$\longrightarrow\quad\frac{1}{2}\,\big(3-\sqrt{5}\big)$\\
$\dfrac{1}{2}$,\quad$\dfrac{7}{12}$,\quad$\dfrac{41}{70}$,\quad$\dfrac{239}{408}$,\quad$\dfrac{1393}{2378}$,\quad$\dfrac{8119}{13860}$,\quad$\dfrac{47321}{80782}$,\quad\ldots&$\longrightarrow\quad2-\sqrt{2}$\\
$\dfrac{2}{5}$,\quad$\dfrac{31}{75}$,\quad$\dfrac{463}{1120}$,\quad$\dfrac{6914}{16725}$,\quad$\dfrac{103247}{249755}$,\quad$\dfrac{1541791}{3729600}$,\quad\ldots&$\longrightarrow\quad\frac{1}{10}\,\big(19-\sqrt{221}\big)$\\
$\dfrac{5}{13}$,\quad$\dfrac{196}{507}$,\quad$\dfrac{7639}{19760}$,\quad$\dfrac{297725}{770133}$,\quad$\dfrac{11603636}{30015427}$,\quad\ldots&$\longrightarrow\quad\frac{1}{26}\,\big(49-\sqrt{1517}\big)$\\
$\dfrac{12}{29}$,\quad$\dfrac{1045}{2523}$,\quad$\dfrac{90903}{219472}$,\quad$\dfrac{7907516}{19091541}$,\quad$\dfrac{687862989}{1660744595}$,\quad\ldots&$\longrightarrow\quad\frac{1}{58}\,\big(111-\sqrt{7565}\big)$\\
$\dfrac{13}{34}$,\quad$\dfrac{1327}{3468}$,\quad$\dfrac{135341}{353702}$,\quad$\dfrac{13803455}{36074136}$,\quad$\dfrac{1407817069}{3679208170}$,\quad\ldots&$\longrightarrow\quad\frac{1}{17}\,\big(32-\sqrt{650}\big)$\\
$\dfrac{34}{89}$,\quad$\dfrac{9079}{23763}$,\quad$\dfrac{2424059}{6344632}$,\quad$\dfrac{647214674}{1693992981}$,\quad\ldots&$\longrightarrow\quad\frac{1}{178}\,\big(335-\sqrt{71285}\big)$\\
$\dfrac{70}{169}$,\quad$\dfrac{35491}{85683}$,\quad$\dfrac{17993867}{43441112}$,\quad$\dfrac{9122855078}{22024558101}$,\quad\ldots&$\longrightarrow\quad\frac{1}{338}\,\big(647-\sqrt{257045}\big)$\\
$\dfrac{75}{194}$,\quad$\dfrac{43651}{112908}$,\quad$\dfrac{25404807}{65712262}$,\quad$\dfrac{14785554023}{38244423576}$,\quad\ldots&$\longrightarrow\quad\frac{1}{97}\,\big(183-\sqrt{21170}\big)$\\
$\dfrac{89}{233}$,\quad$\dfrac{62212}{162867}$,\quad$\dfrac{43486099}{113843800}$,\quad$\dfrac{30396720989}{79576653333}$,\quad\ldots&$\longrightarrow\quad\frac{1}{466}\,\big(877-\sqrt{488597}\big)$\\
$\dfrac{179}{433}$,\quad$\dfrac{232522}{562467}$,\quad$\dfrac{302045899}{730644200}$,\quad$\dfrac{392357390279}{949106253333}$,\quad\ldots&$\longrightarrow\quad\frac{1}{866}\,\big(1657-\sqrt{1687397}\big)$\\
$\dfrac{233}{610}$,\quad$\dfrac{426391}{1116300}$,\quad$\dfrac{780295297}{2042828390}$,\quad$\dfrac{1427939967119}{3738374837400}$,\quad\ldots&$\longrightarrow\quad\frac{1}{305}\,\big(574-\sqrt{209306}\big)$\\
$\dfrac{408}{985}$,\quad$\dfrac{1205641}{2910675}$,\quad$\dfrac{3562668747}{8601043640}$,\quad$\dfrac{10527684941744}{25416081045525}$,\quad\ldots&$\longrightarrow\quad\frac{1}{1970}\,\big(3771-\sqrt{8732021}\big)$\\
\end{adjustbox}
%
%
}
  \caption{The first few right companions,
    $\gamma^{+}_{2},\,\gamma^{+}_{3},\,\ldots$, of the Markov
    fractions $\frac{p}{q}$ with denominator $q<1000$. The limits are
    Markov irrationalities, i.e., irrational numbers~$x$ with Lagrange
    number $L(x)\;>\;\tfrac{1}{3}$. The fractions in the first row are
    equal to $F_{2k-2}/F_{2k}$, and the denominators in the second row
    are equal to the Pell numbers $P_{2k}$ (see
    Remark~\ref{rem:fibonacci_pell} and compare
    Figure~\ref{fig:Markov_fraction_tree}).}
  \label{tab:companions}
\end{table}
lists the first few \emph{right companions},
$\gamma^{+}_{2}(\frac{p}{q}),\,\gamma^{+}_{3}(\frac{p}{q}),\,\ldots$,
for each Markov fraction $\frac{p}{q}=\gamma^{+}_{1}(\frac{p}{q})$
with denominator $q<1000$ (see Definition~\ref{def:companion}).  The
corresponding \emph{left companions} $\gamma^{-}_{k}(\frac{p}{q})$ are
symmetrically located:
\begin{equation}
  \label{eq:companions_symmetric}
  \gamma^{-}_{k}\,\Big(\frac{p}{q}\Big)
  \,+\,
  \gamma^{+}_{k}\,\Big(\frac{p}{q}\Big)
  \;=\;2\;\frac{p}{q}\,.
\end{equation}
The sequences of
right and left companions converge to $\frac{p}{q}\pm \delta_{q}$,
respectively, with
\begin{equation}
  \delta_{q}\;=\;\frac{3}{2}\;-\;\sqrt{\,\frac{9}{4}\;-\;\frac{1}{q^{2}}\,}\;.
\end{equation}
The closed interval
\begin{equation}
  \label{eq:Ipq}
  I_{\frac{p}{q}}\;=\;\left[\,\tfrac{p}{q}\,-\,\delta_{q},\;\tfrac{p}{q}\,+\,\delta_{q}\,\right]
\end{equation}
contains
all companions of $\frac{p}{q}$, but neither any other Markov
fractions nor their companions, but the endpoints of the interval are
limit points of the set of Markov fractions (see Lemma~\ref{lem:Ipq}).

Figure~\ref{fig:companions}
\begin{figure}
  \footnotesize
  \centering
  \input{companion_fig}
  \bigskip
  \caption{Markov fractions and their companions. Markov fractions are
    marked by vertical lines and connected to their companions by
    semicircles. The only companions that are visually discernible at
    the scales of these figures are the first six right companions of
    $\frac{0}{1}$ (see Table~\ref{tab:companions}). The vertical lines
    and semicircles project to simple geodesics in the modular torus
    with both ends in the cusp.}
  \label{fig:companions}
\end{figure}
indicates a striking self-similarity in the arrangement of Markov
fractions and their companions.

\subsection{Hyperbolic geometry}
\label{sec:overview_hyperbolic}

In the upper half-space model of the the hyperbolic plane,
\begin{equation*}
  H^{2}\;=\;\{\,z\in\C\;|\;\im\,z\; >\; 0\}\quad
  \text{with metric}\quad
  ds=\frac{|dz|}{\im z},
\end{equation*}
we associate a horocycle $h(p,q)$ with every pair $(p,q)$ of real
numbers, not both zero:
\begin{compactitem}
  \smallskip%
\item If $q\not=0$, $h(p,q)$ is the horocycle centered at
  $\frac{p}{q}$ with euclidean diameter $\frac{1}{q^{2}}$.
\item $h(p,0)$ is the horizontal horocycle centered at~$\sminfty$ with equation
  $\im z = p^{2}$.
\end{compactitem}
\smallskip%
This establishes a $\PSLTR$-equivariant
bijection between the space
\begin{equation*}
  (\R^{2}\setminus\{(0,0)\})/\{\pm 1\}
\end{equation*}
of \emph{lax vectors} (in Conway's terminology~\cite{Conway97}) and
the space of horocycles in the hyperbolic plane~\cite[p.~665]{Fock07},
\cite[Ch.~5]{Springborn_Markov}.

The following observation translates between Diophantine approximation
and hyperbolic geometry:

\begin{lemma}[{\cite[Proposition~8.1]{Springborn_Markov}}]
  \label{lem:d_vert_horo}
  The signed distance $d(g_{x},\,h(p,q))$ of the vertical
  geodesic $g_{x}$ in $H^{2}$ joining $x\in\R$ and $\sminfty$ and a
  horocycle $h(p,q)$ with $q\not=0$ and $\frac{p}{q}\not=x$ satisfies
  \begin{equation}
    \label{eq:dhg}
    d(g_{x},\,h(p,q))\;=\;\log\left(2q^{2}\left|x-\frac{p}{q}\right|\right).
  \end{equation}
\end{lemma}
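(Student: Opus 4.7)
The plan is to normalize the configuration by a hyperbolic isometry and then read off the signed distance from a standard picture. Set $a = p/q$ and $\delta = x - a$, so $\delta \neq 0$ by assumption. First I would apply the Möbius transformation
\[
\varphi(z) = -\frac{1}{z - a},
\]
which is represented by the matrix $\bigl(\begin{smallmatrix}0 & -1\\ 1 & -a\end{smallmatrix}\bigr)\in\mathrm{PSL}_2(\mathbb{R})$ and is therefore an isometry of $H^2$. Since $\varphi$ sends $a$ to $\sminfty$, it sends the horocycle $h(p,q)$ — the Euclidean circle of diameter $1/q^2$ tangent to $\mathbb{R}$ at $a$ — to a horocycle centered at $\sminfty$, that is, to a horizontal line. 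A short calculation (substitute $u = z - a$ into $|u - i/(2q^2)| = 1/(2q^2)$ to obtain $|u|^2 = \mathrm{Im}(u)/q^2$, then substitute $u = -1/w$) identifies this image as the horizontal line $\{\mathrm{Im}(w) = q^2\}$.

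Next I would determine the image of the geodesic. Since $\varphi(x) = -1/\delta$ and $\varphi(\sminfty) = 0$, the image $\varphi(g_x)$ is the hyperbolic geodesic with ideal endpoints $-1/\delta$ and $0$, namely the upper half-circle of Euclidean diameter $1/|\delta|$. Its apex lies at height $1/(2|\delta|)$ above the midpoint of the diameter.

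Now the claim drops out of the standard picture: the vertical line through the apex is a common perpendicular to the semicircle and to the horizontal horocycle, and the hyperbolic distance between them along this perpendicular equals, in absolute value,
\[
\bigl|\log(q^2) - \log(1/(2|\delta|))\bigr| \;=\; \bigl|\log(2q^2|\delta|)\bigr|.
\]
To fix the sign, I would observe that $\varphi(g_x)$ enters the horoball $\{\mathrm{Im}(w) > q^2\}$ precisely when the apex height exceeds $q^2$, i.e.\ when $1/(2|\delta|) > q^2$, i.e.\ when $\log(2q^2|\delta|) < 0$. This is exactly the range in which the signed distance between a geodesic and a horocycle should be negative, so in both the disjoint and crossing cases the unsigned bars may be dropped and the signed distance equals $\log(2q^2|\delta|) = \log(2q^2|x - p/q|)$, as required.

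There is no serious obstacle here — the proof is a one-step change of coordinates plus a one-line computation in the model. The only place that warrants some care is matching the sign convention for the signed distance to the sign of $\log(2q^2|\delta|)$, which the apex-height comparison settles cleanly.
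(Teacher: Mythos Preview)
Your proof is correct and is essentially the paper's one-picture argument (Figure~\ref{fig:d_vert_horo}) viewed after your M\"obius map $\varphi$: the paper stays in the original coordinates, observes that the geodesic from $p/q$ meeting $g_x$ perpendicularly is the common perpendicular to $g_x$ and $h(p,q)$, and transfers the distance to the vertical line through $p/q$ via the concentric horocycle of Euclidean diameter $2\lvert x-p/q\rvert$ tangent to $g_x$, reading off $d=\log\!\big((2\lvert x-p/q\rvert)\big/(1/q^{2})\big)$. Both routes reduce to the same comparison of two heights in the half-plane model.
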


\begin{proof}
  See Figure~\ref{fig:d_vert_horo}.
  \begin{figure}
    \centering
    \begin{tikzpicture}

  \coordinate[label= below:\small$x\vphantom{\frac{p}{q}}$] (x) at (0,0);
  \coordinate[label= below:$\frac{p}{q}$] (pq) at (1.5,0);

  \draw (-1,0) -- (3.5,0);
  \draw (x) -- +(0,3.5) node[pos=0.75, left] {\small$g_{x}$};
  \draw (pq) -- +(0,3.5);
  \draw ($(pq)+(0,1)$) circle (1) node[right, xshift=9.5mm, yshift=2mm] {\small$h(p,q)$};
  \draw[dashed] ($(pq)+(1.5,1.5)$) arc (0:338:1.5);

  \draw[dashed] ($(pq)+(0,2)$) -- (3.25,2) node[right]{\small$\im z=\dfrac{1}{q^{2}}$};
  \draw[dashed] ($(pq)+(0,3)$) -- (3.25,3) node[right]{\small$\im z=2\,
    \Big|\,x-\dfrac{p}{q}\,\Big|$};

  \draw (0,1.5) arc (90:0:1.5);
  \draw[very thick] (0,1.5) arc (90:67.4:1.5) node[midway, above]
  {\small$d$};
  \draw[very thick] ($(pq)+(0,2)$) -- ($(pq)+(0,3)$) node[midway,
  right] {\small$d$};
  
  \foreach \point in {x, pq}
  {
    \fill [black] (\point) circle (1pt);
  }
\end{tikzpicture}

    \caption{Signed distance $d=d(g_{x},h(p,q))$ between the vertical
      geodesic $g_{x}$ and the horocycle $h(p,q)$.}
    \label{fig:d_vert_horo}
  \end{figure}
\end{proof}

If $p$ and $q$ are coprime integers, then the horocycle $h(p,q)$ is a
\emph{Ford circle}. The previous lemma relates the approximation
number $C(x)$ to the minimal signed distance between the geodesic
$g_{x}$ and a Ford circle $h(p,q)$ not centered at either~$\sminfty$
or~$x$ (see also Figure~\ref{fig:ford}):

\begin{corollary}
  \label{cor:C_geometric}
  For $x\in\R$,
  \begin{equation}
    \label{eq:C_and_d}
    \log(2C(x))\;=\;
    \inf_{\frac{p}{q}\;\in\;\Q\,\setminus\,\{x\}}\;
    d(g_{x}, h(p,q)).
  \end{equation}
In particular, the following two statements are equivalent:
  \begin{compactenum}[(i)]
  \item $C(x)\geq\frac{1}{3}$
  \item For all Ford circles $h(p,q)$ with
    $\frac{p}{q}\in\Q\setminus\{x\}$,
    \begin{equation}
      \label{eq:dhg_ineq}
      d\,(g_{x},\,h(p,q))\;\geq\;\log\,\frac{2}{3}\,.
    \end{equation}    
  \end{compactenum}
\end{corollary}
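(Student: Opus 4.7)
The plan is to derive both parts of the corollary as immediate consequences of Lemma~\ref{lem:d_vert_horo}, by composing that identity with $\log$ and then passing to an infimum. Since $\log$ is continuous and strictly increasing, it commutes with $\inf$ on positive-valued arguments, so no further hyperbolic input should be needed.

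First I would observe that the infimum defining $C(x)$ in~\eqref{eq:C} may be taken over reduced fractions $\frac{p}{q}$ with $\gcd(p,q)=1$ and $q>0$: replacing $(p,q)$ by $(kp,kq)$ for any integer $k>1$ multiplies $q^{2}\,|x-\tfrac{p}{q}|$ by $k^{2}>1$ and therefore can never decrease the quantity. Under this convention, the admissible pairs $(p,q)$ are exactly the labels of Ford circles, so the set over which the infimum is taken is in bijection with the set of Ford circles $h(p,q)$ satisfying $\tfrac{p}{q}\neq x$.

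Second, I would apply Lemma~\ref{lem:d_vert_horo} term by term. For each such reduced fraction,
\begin{equation*}
  d\bigl(g_{x},\,h(p,q)\bigr)\;=\;\log\!\Bigl(\,2\,q^{2}\,\bigl|\,x-\tfrac{p}{q}\,\bigr|\,\Bigr),
\end{equation*}
and the argument of the logarithm is strictly positive because $\tfrac{p}{q}\neq x$. Taking the infimum over all such fractions and using that $\log$ commutes with $\inf$ on strictly positive functions yields
\begin{equation*}
  \inf_{\frac{p}{q}\in\Q\setminus\{x\}}\;d\bigl(g_{x},\,h(p,q)\bigr)
  \;=\;\log\!\Bigl(\,2\,\inf_{\frac{p}{q}\in\Q\setminus\{x\}}\,q^{2}\,\bigl|\,x-\tfrac{p}{q}\,\bigr|\,\Bigr)
  \;=\;\log\bigl(2C(x)\bigr),
\end{equation*}
which is the identity~\eqref{eq:C_and_d}.

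For the equivalence of (i) and (ii), the chain of equivalences $C(x)\geq\tfrac{1}{3}\iff\log(2C(x))\geq\log\tfrac{2}{3}\iff\inf d(g_{x},h(p,q))\geq\log\tfrac{2}{3}\iff$ inequality~\eqref{eq:dhg_ineq} holds for every Ford circle $h(p,q)$ with $\tfrac{p}{q}\neq x$, follows from the monotonicity of $\log$ together with the identity just established. There is no genuine obstacle in this argument: the hyperbolic-geometric content has already been packaged into Lemma~\ref{lem:d_vert_horo}, and the remaining work consists only of the one-line reduction to coprime $(p,q)$ and the formal interchange of $\log$ with~$\inf$.
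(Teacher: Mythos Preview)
Your proof is correct and follows the same route as the paper, which simply presents the corollary as an immediate consequence of Lemma~\ref{lem:d_vert_horo} without a separate argument. You have spelled out the details the paper leaves implicit, in particular the reduction to coprime pairs $(p,q)$ so that the infimum genuinely ranges over Ford circles.
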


The hyperbolic plane $H^{2}$ is the universal cover of the modular
torus $M$ (see Section~\ref{sec:modular_torus}), and the Ford circles
project to the boundary of the maximal cusp neighborhood in the
modular torus. The geometric proof of the classification
Theorem~\ref{thm:classify} for worst approximable rational numbers in
Section~\ref{sec:pf_classify} also proves the following classification
theorem of geodesics in $M$ that have both ends in the cusp but
otherwise stay furthest away from the cusp:

\begin{theorem}
  \label{thm:furthest_geodesics}
  Let $g$ be a geodesic in the modular torus $M$ with both ends in the
  cusp. Then the following statements are equivalent:
  \begin{compactenum}[(i)]
  \item Away from the ends, the signed distance between $g$ and the
    cusp is $\geq\log\frac{2}{3}$.
  \item $g$ does not intersect all simple closed geodesics in $M$.
  \item $g$ does not intersect all simple geodesics in $M$ with both
    ends in the cusp.
  \end{compactenum}
  If one and hence all of these conditions hold, then either
  \begin{compactenum}[(a)]
  \item $g$ is a simple geodesic. In this case there is a unique
    simple closed geodesic that does not intersect $g$, and infinitely
    many simple geodesics with both ends in the cusp that do not
    intersect $g$.
  \item $g$ has self-intersections, and then there are a unique closed
    geodesic $c$ in $M$ and a unique simple geodesic $\tilde{g}$ in $M$
    with both ends in the cusp that do not intersect~$g$.
  \end{compactenum}
\end{theorem}

In case (a), the geodesic $g$ in $M$ is the projection of a vertical
geodesic~$g_{x}$ in~$H^{2}$ for some Markov fraction $x$. In case (b),
the geodesic $g$ in $M$ is the projection of a vertical geodesic
$g_{x}$ in $H^{2}$ for some companion $x$ of a Markov fraction
$\tilde{x}$, and the geodesic $\tilde{g}$ in $M$ is the projection of
the vertical geodesic $g_{\tilde{x}}$ in $H^{2}$.

\section{The algebraic point of view}
\label{sec:algebraic}

This section treats the two types of worst approximable rational
numbers, the Markov fractions and their companions, from the algebraic
point of view. More geometrically minded readers might want to read
Section~\ref{sec:geometric_pov} first, which provides a complementary
geometric point of view.

\subsection{Markov numbers and Markov triples}
\label{sec:Markov_numbers}

For reference and to fix notation, we begin with a brief review of
some basic definitions and facts about Markov numbers and Markov
triples~\cite{Aigner13,Cassels57,Cusick89}.

A \emph{Markov triple} $(q_{1},q_{2},q_{3})$ is a positive integer
solution of Markov's equation
\begin{equation}
  \label{eq:Markov}
  x^{2}\,+\,y^{2}\,+\,z^{2}\;=\;3 xyz\,.
\end{equation}
Equivalently, a triple of positive integers
$(q_{1},q_{2},q_{3})\in{(\Z_{>0})}^{3}$ is a Markov triple if and only if
\begin{equation}
  \label{eq:Markov_q}
  \frac{q_{1}}{q_{2}\,q_{3}}
  \;+\;
  \frac{q_{2}}{q_{3}\,q_{1}}
  \;+\;\frac{q_{3}}{q_{1}\,q_{2}}
  \;=\; 3. 
\end{equation}
A \emph{Markov number} is a positive integer that is an element of
some Markov triple. For example, $(5,1,2)$ and $(1,5,2)$ are two of
the six Markov triples containing the Markov numbers $1$, $2$, and
$5$.

Since Markov's equation~\eqref{eq:Markov} is quadratic in each
variable, there are two solutions for each variable if the other two
variables are fixed. Thus, if $(q_{1},q_{2},q_{3})$ is a Markov
triple, then so are its \emph{neighbors}
\begin{equation*}
  (q_{1}',\,q_{2},\,q_{3}),\quad
  (q_{1},\,q_{2}',\,q_{3}),\quad
  (q_{1},\,q_{2},\,q_{3}'),
\end{equation*}
where the $q_{i}'$ are determined by Vieta's formulas for quadratic
polynomials, i.e.,
\begin{equation}
  \label{eq:qiprime}
  q_{i}'\;=\;\frac{q_{j}^{2}\,+\,q_{k}^{2}}{q_{i}}
  \;=\;3\,q_{j}\,q_{k}\,-\,q_{i}
\end{equation}
with $\{i,j,k\}=\{1,2,3\}$. This neighbor relation turns the set of
Markov triples into a plane binary tree. The Markov numbers correspond
to the regions into which the tree separates the plane (see
Figure~\ref{fig:Markov_tree}).

The root Markov triple $(1,1,1)$ and its neighbors $(2,1,1)$,
$(1,2,1)$, and $(1,1,2)$ are called \emph{singular}. All other Markov
triples are called \emph{non-singular} and consist of three different
Markov numbers. For each Markov triple, there is a unique path to the
root. At each step along this path, the largest Markov number of the
triple is replaced according to the rule~\eqref{eq:qiprime}.

\subsection{Markov fractions and rational Markov triples}
\label{sec:Markov_fractions}

Markov fractions are one of the two types of worst approximable
rational numbers (see Theorems~\ref{thm:classify}
and~\ref{thm:Markov_best_approximants}). Like the Markov numbers, we
define Markov fractions in terms of triples:

\begin{definition}
  \label{def:Markov_fraction}
  (i) A triple of rational numbers $(x_{1},x_{2},x_{3})$ is a
  \emph{rational Markov triple} if $x_{k}=\frac{p_{k}}{q_{k}}$ for
  some Markov triple $(q_{1},q_{2},q_{3})$ and some integers
  $p_{1},p_{2},p_{3}$ satisfying the equations
  \begin{align}
    \label{eq:p1p2}
    p_{2}\,q_{1}\,-\,p_{1}\,q_{2}\;&=\;q_{3}\,,\\
    \label{eq:p2p3}
    p_{3}\,q_{2}\,-\,p_{2}\,q_{3}\;&=\;q_{1}\,.
  \end{align}

  (ii) A \emph{Markov fraction} is a rational number $x_{k}$ that is an
  element of some rational Markov triple $(x_{1},x_{2},x_{3})$.
\end{definition}

A rational Markov triple $(x_{1},x_{2},x_{3})$ is
strictly increasing,
\begin{equation*}
  x_{1}\;<\;x_{2}\;<\;x_{3}\,,
\end{equation*}
because Markov numbers are positive and equations~\eqref{eq:p1p2}
and~\eqref{eq:p2p3} are equivalent to
\begin{equation}
  \label{eq:p1p2_alt}
  \frac{p_{2}}{q_{2}}\,-\,\frac{p_{1}}{q_{1}}\;=\;\frac{q_{3}}{q_{1}\,q_{2}}
  \qquad\text{and}\qquad
  \frac{p_{3}}{q_{3}}\,-\,\frac{p_{2}}{q_{2}}\;=\;\frac{q_{1}}{q_{2}\,q_{3}}.
\end{equation}
Equations~\eqref{eq:Markov_q}, \eqref{eq:qiprime},
and~\eqref{eq:p1p2_alt} also yield the relation
\begin{equation}
  \label{eq:p1p3}
  p_{3}\,q_{1}\,-\,p_{1}\,q_{3}\;=\;q_{2}'
\end{equation}
with $q_{2}'$ defined by~\eqref{eq:qiprime}, which will
be used in the proof of Lemma~\ref{lem:completion}.

Since the Markov numbers $q_{k}$ of a Markov triple
$(q_{1},q_{2},q_{3})$ are pairwise coprime, equations~\eqref{eq:p1p2}
and~\eqref{eq:p2p3} imply that each pair $p_{k}$, $q_{k}$ is also
coprime, so the fractions $\frac{p_{k}}{q_{k}}$ are reduced. This
means that the Markov triple $(q_{1}, q_{2}, q_{3})$ in
Definition~\ref{def:Markov_fraction} is uniquely determined by the
rational Markov triple $(x_{1},x_{2},x_{3})$.
Further, if $(x_{1},x_{2},x_{3})$ is a rational Markov triple, then for
every $n\in\Z$,
\begin{equation*}
  (\,x_{1}+n,\;x_{2}+n,\;x_{3}+n\,)
\end{equation*}
is also a rational Markov triple with the same Markov triple
$(q_{1},q_{2},q_{3})$ of denominators, and all rational Markov triples
with this denominator triple are obtained in this way. Since
$(-x_{3},-x_{2},-x_{1})$ is also a rational Markov triple, the group
of $\Z$-affine transformations~\eqref{eq:Z-affine} acts on the set of
rational Markov triples, and hence on the set of Markov fractions.

There is another $\Z$-action on the set of rational Markov
triples: Using equations~\eqref{eq:Markov_q} and~\eqref{eq:p1p2_alt},
it is easy to check that
\begin{equation}
  \label{eq:Zshift}
  (\,x_{3}-3,\;x_{1},\;x_{2}\,)  
  \quad\text{and}\quad
  (\,x_{2},\;x_{3},\;x_{1}+3\,)
\end{equation}
are also rational Markov triples, with cyclically permuted
denominators. So the action of the cyclic permutation group
$A_{3}\simeq\Z_{3}$ on the set of Markov triples $(q_{1},q_{2},q_{3})$
lifts to a free action of $\Z$ on the set of rational Markov triples
$\big(\frac{p_{1}}{q_{1}},\frac{p_{2}}{q_{2}},\frac{p_{3}}{q_{3}}\big)$. Accordingly,
the tree of Markov numbers (see Figure~\ref{fig:Markov_tree}) corresponds
to the forest of Markov fractions constructed in
Lemma~\ref{lem:children_parents}, a subtree of which is shown in
Figure~\ref{fig:Markov_fraction_tree}.

To see that there exists a rational Markov triple
$\big(\frac{p_{1}}{q_{1}},\frac{p_{2}}{q_{2}},\frac{p_{3}}{q_{3}}\big)$
for each Markov triple $(q_{1},q_{2},q_{3})$, note that the
coprimality of Markov triples implies the existence of $p_{1}$ and
$p_{2}$ satisfying~\eqref{eq:p1p2} (and $p_{2}$ and $p_{3}$
satisfying~\eqref{eq:p2p3}, and $p_{1}$ and $p_{3}$
satisfying~\eqref{eq:p1p3}). By the following lemma, there exists a
suitable $p_{3}$ (or $p_{1}$ or $p_{2}$, respectively) to complete
the rational Markov triple.

\begin{lemma}[Completion of rational Markov triples]
  \label{lem:completion}
  
  Let $(q_{1},q_{2},q_{3})$ be a Markov
  triple.

  \begin{compactenum}[(i)]
  \item If $p_{1},p_{2}\in\Z$ satisfy equation~\eqref{eq:p1p2}, then
    $ \frac{p_{2}^{2}\,+\,1}{q_{2}}\,\in\,\Z, $ and
    \begin{equation}\label{eq:p3}
      p_{3}\;\coloneqq\; q_{1}\cdot\,\frac{p_{2}^{2}\,+\,1}{q_{2}}\,-\,p_{1}\,p_{2}
    \end{equation}
    satisfies equation~\eqref{eq:p2p3}.
  \item If $p_{2},p_{3}\in\Z$ satisfy equation~\eqref{eq:p2p3}, then
    $ \frac{p_{2}^{2}\,+\,1}{q_{2}}\,\in\,\Z, $ and
    \begin{equation}\label{eq:p1}
      p_{1}\;\coloneqq\; p_{2}\,p_{3}\,-\,q_{3}\cdot\frac{p_{2}^{2}\,+\,1}{q_{2}}
    \end{equation}
    satisfies equation~\eqref{eq:p1p2}.
  \item If $p_{1},p_{3}\in\Z$ satisfy equation~\eqref{eq:p1p3}, then
    \begin{equation}\label{eq:p2}
      p_{2}\;\coloneqq\;\frac{p_{1}\,q_{1}\,+\,p_{3}\,q_{3}}{q_{2}'}
    \end{equation}
    is an integer and satisfies equations~\eqref{eq:p1p2} and~\eqref{eq:p2p3}.
  \end{compactenum}
  \medskip%
  In any case, therefore,
  $\big(\frac{p_{1}}{q_{1}},\frac{p_{2}}{q_{2}},\frac{p_{3}}{q_{3}}\big)$
  is a rational Markov triple, which is, moreover, uniquely determined
  by $\big(\frac{p_{1}}{q_{1}},\frac{p_{2}}{q_{2}}\big)$,
  $\big(\frac{p_{2}}{q_{2}},\frac{p_{3}}{q_{3}}\big)$, or
  $\big(\frac{p_{1}}{q_{1}},\frac{p_{3}}{q_{3}}\big)$, respectively.
\end{lemma}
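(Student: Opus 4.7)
The plan is to reduce everything to the Markov equation $q_{1}^{2}+q_{2}^{2}+q_{3}^{2}=3q_{1}q_{2}q_{3}$ and the standard fact that the entries of a Markov triple are pairwise coprime. Throughout, I will use the two congruences
\begin{equation*}
q_{1}^{2}+q_{3}^{2}\;\equiv\;0\pmod{q_{2}}
\qquad\text{and}\qquad
q_{1}^{2}+q_{3}^{2}\;=\;q_{2}\,q_{2}',
\end{equation*}
both of which follow directly from~\eqref{eq:Markov} and~\eqref{eq:qiprime}, together with the fact that $(q_{1},q_{2}',q_{3})$ is again a Markov triple, so $\gcd(q_{1},q_{2}')=\gcd(q_{2}',q_{3})=1$.

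For part~(i), I start from $p_{2}q_{1}\equiv q_{3}\pmod{q_{2}}$. Squaring gives $p_{2}^{2}q_{1}^{2}\equiv q_{3}^{2}\equiv -q_{1}^{2}\pmod{q_{2}}$, hence $(p_{2}^{2}+1)q_{1}^{2}\equiv 0\pmod{q_{2}}$; since $\gcd(q_{1},q_{2})=1$, this proves the integrality claim $(p_{2}^{2}+1)/q_{2}\in\Z$. To verify that the $p_{3}$ defined by~\eqref{eq:p3} satisfies~\eqref{eq:p2p3}, I multiply~\eqref{eq:p3} by $q_{2}$ to obtain $p_{3}q_{2}=q_{1}(p_{2}^{2}+1)-p_{1}p_{2}q_{2}$, and then compute
\begin{equation*}
p_{3}q_{2}\,-\,p_{2}q_{3}\;=\;q_{1}\,+\,p_{2}\bigl(q_{1}p_{2}-p_{1}q_{2}-q_{3}\bigr)\;=\;q_{1},
\end{equation*}
using~\eqref{eq:p1p2} for the second equality. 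Part~(ii) is identical after interchanging the roles of the indices~$1$ and~$3$ (the Markov equation is symmetric, and so are the defining relations).

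For part~(iii), the main point is to show that the numerator $p_{1}q_{1}+p_{3}q_{3}$ in~\eqref{eq:p2} is divisible by $q_{2}'$. Multiplying $p_{3}q_{1}-p_{1}q_{3}=q_{2}'$ by $q_{1}$ and reducing modulo $q_{2}'$, together with $q_{1}^{2}\equiv -q_{3}^{2}\pmod{q_{2}'}$, yields $q_{3}(p_{1}q_{1}+p_{3}q_{3})\equiv 0\pmod{q_{2}'}$, and since $\gcd(q_{3},q_{2}')=1$ this gives the required divisibility. Then I verify~\eqref{eq:p1p2} and~\eqref{eq:p2p3} by direct substitution, eliminating $q_{1}^{2}$ and $q_{3}^{2}$ via $q_{2}q_{2}'=q_{1}^{2}+q_{3}^{2}$: for instance,
\begin{equation*}
p_{2}q_{1}\;=\;\frac{p_{1}q_{1}^{2}+p_{3}q_{1}q_{3}}{q_{2}'}
\;=\;p_{1}q_{2}\,+\,\frac{q_{3}\bigl(p_{3}q_{1}-p_{1}q_{3}\bigr)}{q_{2}'}
\;=\;p_{1}q_{2}\,+\,q_{3},
\end{equation*}
and the analogous manipulation gives $p_{3}q_{2}-p_{2}q_{3}=q_{1}$.

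The uniqueness statement at the end is essentially automatic: given any two of the three $p_{k}$, the remaining one is determined by a linear equation of the form~\eqref{eq:p1p2}, \eqref{eq:p2p3}, or~\eqref{eq:p1p3} with coefficient $q_{j}\neq 0$ in front of the unknown. The only genuinely nontrivial piece of the lemma is the divisibility assertion in part~(iii), and I expect that to be the main obstacle; everything else is routine algebra once the two congruences for $q_{1}^{2}+q_{3}^{2}$ and the pairwise coprimality of Markov triples are in hand.
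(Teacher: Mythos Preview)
Your proof is correct and follows essentially the same approach as the paper: both arguments reduce the integrality claims to the congruence $q_{1}^{2}+q_{3}^{2}\equiv 0\pmod{q_{2}}$ (respectively $\pmod{q_{2}'}$) together with pairwise coprimality, and both verify the linear relations~\eqref{eq:p1p2}--\eqref{eq:p2p3} by direct substitution. The only cosmetic difference is that in part~(iii) you cancel $q_{3}$ while the paper cancels $q_{1}$, and the paper phrases the verification via the fraction form~\eqref{eq:p1p2_alt} rather than the integer form---but the underlying algebra is identical.
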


For a proof see Section~\ref{sec:pf_completion_lemma}. The integrality
$\frac{p_{2}^{2}\,+\,1}{q_{2}}$ implies the following necessary condition for
Markov numbers, which is well known~\cite[Lemma~3.14]{Aigner13} and
was proved in a similar way~\cite[p.~196]{Schmutz}:

\begin{corollary}
  \label{cor:quadratic_residue}
  If $q$ is a Markov number, then $-1$ is a quadratic residue modulo
  $q$.
\end{corollary}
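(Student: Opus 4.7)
The plan is to derive the corollary as an essentially immediate consequence of part (i) of the Completion Lemma, using only the integrality assertion $\frac{p_2^{2}+1}{q_2} \in \Z$ and the pairwise coprimality of the entries of a Markov triple.

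First I would let $q$ be an arbitrary Markov number and choose any Markov triple containing $q$. Since the equation \eqref{eq:Markov} and the neighbor relation are symmetric in the three entries, I may reorder the triple as $(q_1,q_2,q_3)$ with $q_2 = q$. The key auxiliary fact I would invoke is that the entries of a Markov triple are pairwise coprime; this is standard (and can be read off immediately from \eqref{eq:Markov}, since a common prime factor of any two entries would have to divide the third squared, hence the third).

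Next, using $\gcd(q_1,q_2)=1$, I would apply Bézout's lemma to produce integers $p_1,p_2\in\Z$ satisfying
\begin{equation*}
  p_{2}\,q_{1}\,-\,p_{1}\,q_{2}\;=\;q_{3},
\end{equation*}
which is precisely the hypothesis of Lemma~\ref{lem:completion}(i). That lemma then asserts
\begin{equation*}
  \frac{p_{2}^{2}+1}{q_{2}}\;\in\;\Z,
\end{equation*}
i.e., $q = q_2$ divides $p_2^{2}+1$. Hence $p_2^{2} \equiv -1 \pmod{q}$, so $-1$ is a quadratic residue modulo $q$, as claimed.

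There is really no obstacle here beyond being willing to invoke Lemma~\ref{lem:completion}(i); the only thing to check carefully is that the needed pair $(p_1,p_2)$ exists, which is exactly the coprimality of $q_1$ and $q_2$ inside a Markov triple. The trivial cases $q=1$ and $q=2$ also pose no issue, as $-1 \equiv 0 \pmod 1$ and $-1 \equiv 1 \equiv 1^2 \pmod 2$.
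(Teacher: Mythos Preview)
Your proposal is correct and follows exactly the route the paper intends: the corollary is stated immediately after Lemma~\ref{lem:completion} precisely because the integrality assertion $\frac{p_2^2+1}{q_2}\in\Z$ from part~(i), together with the existence of $p_1,p_2$ guaranteed by the pairwise coprimality of Markov triples (which the paper notes just before the lemma), gives the result at once. You have simply made explicit the one-line derivation the paper leaves to the reader.
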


It will be useful to extend the distinction between singular and
non-singular Markov triples (see Section~\ref{sec:Markov_numbers}) to
rational Markov triples, and to have a special term to single out the
rational Markov triples with denominators $(1,1,1)$. Also, since we
are especially interested in rational Markov triples in which the
second denominator is largest (see
Theorem~\ref{thm:Markov_best_approximants}), it makes sense to
introduce a special term for those, too:

\begin{definition}
  \label{def:Markov_triple_types}
  A rational Markov triple
  $\big(\frac{p_{1}}{q_{1}},\frac{p_{2}}{q_{2}},\frac{p_{3}}{q_{3}}\big)$
  is called
  \begin{compactitem}
    \smallskip
  \item \emph{singular} if the Markov triple $(q_{1},q_{2},q_{3})$
    is singular, i.e., $\{q_{1},q_{2},q_{3}\}\subset\{1,2\}$,
    \smallskip
  \item \emph{integral} if $q_{1}=q_{2}=q_{3}=1$, and
    \smallskip
  \item \emph{centered} if
    \begin{equation}
      \label{eq:centered}
      q_{2}\;\geq\;\max\,\{\,q_{1},\,q_{3}\,\}.
  \end{equation}
  \end{compactitem}
\end{definition}

Integral rational Markov triples are of the form $(n-1,n,n+1)$ with
$n\in\Z$. They are integral, centered, and the only centered rational
Markov triples for which~\eqref{eq:centered} is satisfied with
equality. All non-integral singular Markov triples have one of the
forms $(n,n+\frac{1}{2},n+1)$, $(n-2,n,n+\frac{1}{2})$ or
$(n-\frac{1}{2},n,n+2)$ for $n\in\Z$, and precisely those of the
first form are centered.

\begin{remark}
  \label{rem:centered_Markov_triple}
  The notion of a centered rational Markov triple is closely related
  to Rockett~\& Sz\"usz's concept of a \emph{complete Markov
    triple}~\cite[p.~103]{Rockett92}: If
  $\big(\frac{p_{1}}{q_{1}},\frac{p_{2}}{q_{2}},\frac{p_{3}}{q_{3}}\big)$
  is a centered rational Markov triple with
  $\frac{p_{k}}{q_{k}}\in[0,\frac{1}{2}]$ then
  $(q_{2}, p_{2}; q_{1}, p_{1}; q_{3}, p_{3})$ is a complete Markov
  triple. Compare also Figure~\ref{fig:Markov_fraction_tree}
  with~\cite[p.~105]{Rockett92}. However, the idea to interpret the
  pairs $(q_{i}, p_{i})$ as rational numbers $\frac{p_{i}}{q_{i}}$
  does not seem to be obvious in the context of~\cite{Rockett92}.
\end{remark}

We can now state the main result about the approximation constant of a
Markov fraction, and the best approximating rationals that achieve it,
as follows:

\begin{theorem}[Best approximants of Markov fractions]
  \label{thm:Markov_best_approximants}
  (i) Every Markov fraction is the middle element of a
  unique centered rational Markov triple.

  (ii) If
  $\big(\frac{p_{1}}{q_{1}},\frac{p_{2}}{q_{2}},\frac{p_{3}}{q_{3}}\big)$
  is a centered rational Markov triple, then the best approximants
  of~$\frac{p_{2}}{q_{2}}$ are precisely~$\frac{p_{1}}{q_{1}}$
  and~$\frac{p_{3}}{q_{3}}$\,, i.e.,
  \begin{equation*}
    C
    \left(
      \frac{p_{2}}{q_{2}}
    \right)
    \;=\;
    q_{1}^{2}\cdot\,
    \left(
      \frac{p_{2}}{q_{2}}-\frac{p_{1}}{q_{1}}
    \right)
    \;=\;
    q_{3}^{2}\cdot\,
    \left(
      \frac{p_{3}}{q_{3}}-\frac{p_{2}}{q_{2}}
    \right)\,,
  \end{equation*}
  and for every rational number $\frac{a}{b}$ not contained in the
  triple,
  \begin{equation*}
    C
    \left(
      \frac{p_{2}}{q_{2}}
    \right)
    <
    b^{2}\cdot \left| \frac{p_{2}}{q_{2}}-\frac{a}{b} \right|.
  \end{equation*}
    In particular, this implies
  \begin{equation}
    \label{eq:Cp2q2}
    C
    \left(
      \frac{p_{2}}{q_{2}}
    \right)
    \;=\; \frac{q_{1}\,q_{3}}{q_{2}}\;>\;\frac{1}{3}\,.
  \end{equation}
\end{theorem}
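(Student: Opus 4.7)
My plan is to reduce the theorem to a handful of direct computations, isolating the strict inequality in (ii) as the sole substantive obstacle. Markov's equation gives $3q_1q_3-q_2=(q_1^2+q_3^2)/q_2>0$, proving (iii). The relations (\ref{eq:p1p2_alt}) yield $q_1^2\big(\frac{p_2}{q_2}-\frac{p_1}{q_1}\big)=q_3^2\big(\frac{p_3}{q_3}-\frac{p_2}{q_2}\big)=\frac{q_1q_3}{q_2}$, establishing both the equality statement in (ii) and the upper bound $C(\frac{p_2}{q_2})\leq\frac{q_1q_3}{q_2}$.

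For part (i), uniqueness follows from the Markov tree: if $q_2>2$, then $q_2$ is the maximum of a unique Markov triple, which determines $\{q_1,q_3\}$ as an unordered pair. The strict increase of rational Markov triples distinguishes $q_1$ from $q_3$, and Lemma~\ref{lem:completion} then fixes $p_1,p_3$ from $p_2,q_1,q_2,q_3$. The singular cases $q_2\in\{1,2\}$ are verified directly on the triples $(n-1,n,n+1)$ and $(n,n+\tfrac{1}{2},n+1)$. For existence, I begin from an arbitrary rational Markov triple containing $\frac{p_2}{q_2}$ (which exists by Definition~\ref{def:Markov_fraction}), cycle $\frac{p_2}{q_2}$ to the middle position via the shifts (\ref{eq:Zshift}), and then climb the Markov tree by iteratively replacing the largest denominator with its Vieta-neighbor---updating the corresponding numerator via Lemma~\ref{lem:completion} at each step---until $q_2$ becomes the maximum of the triple.

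The heart of the proof is the strict inequality in (ii): for every reduced fraction $\frac{a}{b}\notin\{\frac{p_1}{q_1},\frac{p_2}{q_2},\frac{p_3}{q_3}\}$ with $b>0$, we must show $b\cdot|p_2b-aq_2|>q_1q_3$. My strategy rests on the pair of algebraic identities
\begin{equation*}
(p_2b-aq_2)q_1\;=\;q_2(p_1b-aq_1)+q_3b,\qquad (p_2b-aq_2)q_3\;=\;q_2(p_3b-aq_3)-q_1b,
\end{equation*}
derived from (\ref{eq:p1p2})--(\ref{eq:p2p3}), together with the consequence $q_1(p_3b-aq_3)+q_3(aq_1-p_1b)=q_2'b$ of (\ref{eq:p1p3}). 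If $\frac{a}{b}\notin[\frac{p_1}{q_1},\frac{p_3}{q_3}]$, one of $p_1b-aq_1$ or $p_3b-aq_3$ has favorable sign and magnitude at least $1$, yielding the bound directly through the first identity. If $\frac{p_1}{q_1}<\frac{a}{b}<\frac{p_3}{q_3}$ with $\frac{a}{b}\neq\frac{p_2}{q_2}$, then both $p_3b-aq_3$ and $aq_1-p_1b$ are positive integers; the second identity shows $|(p_2b-aq_2)q_3|$ is a nonzero multiple of $q_3$, and the consequence of (\ref{eq:p1p3}) forces $b\geq(q_1+q_3)/q_2'$. I would combine these two bounds by inducting on the height of the centered triple in the forest of Section~\ref{sec:forest}, with singular triples (where $q_1q_3\leq 2$) as base cases. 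The delicate subcase is when $\frac{a}{b}$ lies closest to $\frac{p_2}{q_2}$, where one must exploit the congruence $p_2q_1\equiv q_3\pmod{q_2}$ to show that the minimal admissible $b$ exceeds $q_1q_3$ with just enough margin to secure strictness.
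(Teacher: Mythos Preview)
Your argument for part~(i) contains a genuine error: you assert that ``if $q_{2}>2$, then $q_{2}$ is the maximum of a unique Markov triple, which determines $\{q_{1},q_{3}\}$ as an unordered pair.'' This is precisely the Uniqueness Conjecture for Markov numbers (Conjecture~\ref{con:markov_fractions} in the paper), which remains open. You cannot invoke it. The paper's proof of uniqueness avoids this entirely by arguing through the forest structure of Lemma~\ref{lem:children_parents}: within a single tree, descendants of a centered triple have strictly larger middle denominator and ancestors have strictly smaller middle denominator, so no two centered triples in the same tree share a middle element; different trees live in disjoint integer intervals, so their middle elements can only collide at integers, which are never middle elements of two distinct centered triples. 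This argument makes no claim about whether the same Markov number could appear as a denominator at two different nodes.

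Your approach to part~(ii) is not yet a proof. You correctly set up the identities following from~\eqref{eq:p1p2}--\eqref{eq:p2p3} and dispose of the case $\frac{a}{b}\notin[\frac{p_{1}}{q_{1}},\frac{p_{3}}{q_{3}}]$, but the interior case is left at the level of a plan (``I would combine these two bounds by inducting\ldots'', ``one must exploit the congruence\ldots''). The bound $b\geq(q_{1}+q_{3})/q_{2}'$ you extract from~\eqref{eq:p1p3} is far too weak by itself, and the proposed induction on forest height is not carried out. The paper takes a completely different route here: it passes to the modular torus, shows geometrically (Lemma~\ref{lem:nearest_to_Markov}) that a best approximant cannot be separated from $g_{x}$ by any $G$-translate of $g_{x}$, and thereby reduces the candidate set to the ideal boundary points of four explicit domains $R_{x},L_{x},R_{x}',L_{x}'$. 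The companions (boundary points of $R_{x},L_{x}$) are checked directly and give quality $\geq 3$; the boundary points of $L_{x}',R_{x}'$ are themselves Markov fractions forming rational Markov triples with $x$ in the middle, and among these the centered triple visibly minimizes $q_{1}q_{3}/q_{2}$. The geometry is doing real work that your algebraic case analysis has not replaced.
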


\noindent%
For a proof of (i) see Section~\ref{sec:pf_thm_unique_centered_triple},
for a proof of (ii) see Section~\ref{sec:pr_Markov_approximants}.

\begin{examples}
  \label{exa:optimal}
  Since $\big(\frac{0}{1},\frac{2}{5},\frac{1}{2}\big)$ and
  $\big(\frac{2}{5},\frac{12}{29},\frac{1}{2}\big)$ are centered
  rational Markov triples,
  \begin{compactenum}[\bf (i)]
  \item the best approximants of $\frac{2}{5}$ are
    $\frac{0}{1}$ and $\frac{1}{2}$, and $C\big(\tfrac{2}{5}\big)=\tfrac{2}{5}$,
  \item the best approximants of $\frac{12}{29}$ are
    $\frac{2}{5}$ and $\frac{1}{2}$, and $C\big(\tfrac{12}{29}\big)=\tfrac{10}{29}$.
  \end{compactenum}
  Figure~\ref{fig:fundamental_domain_1-5-2} illustrates the
  geometric interpretation of these examples.
\end{examples}

The plane forest of centered rational Markov triples (see
Section~\ref{sec:overview_classify} and
Figure~\ref{fig:Markov_fraction_tree}) is constructed in the following
lemma. It is used in Section~\ref{sec:pf_thm_unique_centered_triple}
for the proof Theorem~\ref{thm:Markov_best_approximants}~(i) and in
Section~\ref{sec:markov_frac_geometric} to establish the geometric
characterization of rational Markov triples.

\begin{lemma}[Forest of centerend rational Markov triples]
  \label{lem:children_parents}
  \ 
  \begin{compactenum}[(i)]
  \item If
    $\big(\frac{p_{1}}{q_{1}},\frac{p_{2}}{q_{2}},\frac{p_{3}}{q_{3}}\big)$
    is a rational Markov triple, then so are its \emph{children}
    \begin{equation}
      \label{eq:children}
      \left(\,
        \frac{p_{1}}{q_{1}},\;
        \frac{p_{3}'}{q_{3}'},\;
        \frac{p_{2}}{q_{2}}\,
      \right)
      \quad\text{and}\quad
      \left(\,
        \frac{p_{2}}{q_{2}},\;
        \frac{p_{1}'}{q_{1}'},\;
        \frac{p_{3}}{q_{3}}\,
      \right)
    \end{equation}
    and its \emph{parents}
    \begin{equation}
      \label{eq:parents}
      \left(\,
        \frac{p_{2}'}{q_{2}'},\;
        \frac{p_{1}}{q_{1}},\;
        \frac{p_{3}}{q_{3}}\,
      \right),
      \qquad
      \left(\,
        \frac{p_{1}}{q_{1}},\;
        \frac{p_{3}}{q_{3}},\;
        \frac{p_{2}'}{q_{2}'}\,+\,3\,
      \right)  
    \end{equation}
    where $q_{1}'$, $q_{2}'$, and $q_{3}'$ are defined
    by~\eqref{eq:qiprime}, and $p_{1}'$, $p_{2}'$, and $p_{3}'$
    defined by
    \begin{align}
      \label{eq:p1prime}
      p_{1}'\;&=\;\frac{1}{q_{1}}\;(\,p_{2}\,q_{2}\,+\,p_{3}\,q_{3}\,)\,,\\
      \label{eq:p2prime}
      p_{2}'\;&=\;p_{3}\,p_{1}\,-\,\frac{p_{1}^{2}+1}{q_{1}}\;q_{3}\,,\\
      \label{eq:p3prime}
      p_{3}'\;&=\;\frac{1}{q_{3}}\;(\,p_{1}\,q_{1}\,+\,p_{2}\,q_{2}\,).
    \end{align}
    In particular, $p_{k}'\in\Z$ for $k\in\{1,2,3\}$.
    
  \item A rational Markov triple is a child of each of its parents and
    a parent of each of its children.
  
  \item Both children of a centered rational Markov triple are
    centered, and exactly one parent of a non-singular centered
    rational Markov triple is centered.
  \end{compactenum}

  \medskip%
  Thus, the parent-child relationship induces on the set of
  non-integer centered rational Markov triples the structure of a
  plane binary forest, with singular root triples $(n,
  n+\frac{1}{2}, n+1)$ as shown in Figure~\ref{fig:roots}
  and the generating rule~\eqref{eq:pqprime}.
\end{lemma}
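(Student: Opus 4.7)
The plan is to treat the three parts in order, with all the real work concentrated in part (iii).

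For part (i), the verifications are direct computations from the definitions. To check that the first child $(p_1/q_1,\, p_3'/q_3',\, p_2/q_2)$ is a rational Markov triple, I would first note that $(q_1, q_3', q_2)$ is itself a Markov triple by the Vieta construction in Section~\ref{sec:Markov_numbers} applied to $q_3$. Substituting $p_3' = (p_1 q_1 + p_2 q_2)/q_3$ and $q_3' = (q_1^2 + q_2^2)/q_3$, both relations \eqref{eq:p1p2} and \eqref{eq:p2p3} collapse after using $p_2 q_1 - p_1 q_2 = q_3$ to
\begin{equation*}
p_3' q_1 - p_1 q_3' \;=\; \tfrac{q_2(p_2 q_1 - p_1 q_2)}{q_3} \;=\; q_2,
\qquad
p_2 q_3' - p_3' q_2 \;=\; \tfrac{q_1(p_2 q_1 - p_1 q_2)}{q_3} \;=\; q_1.
\end{equation*}
Integrality of $p_3'$ follows because Markov's equation gives $q_1^2 + q_2^2 \equiv 0 \pmod{q_3}$, so multiplying $p_1 q_1 + p_2 q_2$ by $q_1$ yields $p_1(q_1^2 + q_2^2) \equiv 0 \pmod{q_3}$, and coprimality $\gcd(q_1, q_3) = 1$ finishes the argument. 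The second child is symmetric. For the first parent, integrality of $p_2'$ uses the fact that $(p_1^2 + 1)/q_1 \in \Z$ (the analog of Lemma~\ref{lem:completion}(i) applied to the first position), and the two defining identities reduce via \eqref{eq:p1p3} to routine computations. The second parent is the same triple translated by the $\Z$-action \eqref{eq:Zshift}, which is why the ``$+3$'' appears.

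Part (ii) is then a short calculation using Lemma~\ref{lem:completion}. Applying the second-child rule to the first parent $(p_2'/q_2',\, p_1/q_1,\, p_3/q_3)$ produces a triple whose middle denominator is $(q_1^2 + q_3^2)/q_2' = q_2$ and whose middle numerator is $(p_1 q_1 + p_3 q_3)/q_2'$; by Lemma~\ref{lem:completion}(iii), this numerator equals $p_2$. The other three parent--child combinations are identical after relabeling.

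Part (iii) carries the substantive content. For the two children of a centered triple, the denominator triples are $(q_1, q_3', q_2)$ with $q_3' = 3q_1 q_2 - q_3$, and $(q_2, q_1', q_3)$ with $q_1' = 3 q_2 q_3 - q_1$. Using $q_1, q_2, q_3 \geq 1$ together with the centering hypothesis $q_2 \geq \max(q_1, q_3)$, each of the inequalities $q_3' \geq q_2$, $q_3' \geq q_1$, $q_1' \geq q_2$, $q_1' \geq q_3$ reduces to a one-line estimate of the form $q_2(3 q_i - 1) \geq q_j$, so both children are centered. For the parents, write $q_2' = 3 q_1 q_3 - q_2$. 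The first parent, with denominators $(q_2', q_1, q_3)$, is centered iff $q_1 \geq \max(q_2', q_3)$; the second, with denominators $(q_1, q_3, q_2')$, is centered iff $q_3 \geq \max(q_1, q_2')$. In the non-singular case all three $q_i$ are distinct, so $q_1 \neq q_3$; say $q_1 > q_3$ (the other case is symmetric). Then the second parent fails centering, and it remains to show $q_1 \geq q_2'$, i.e.\ $q_2 \geq q_1(3 q_3 - 1)$. Since $q_2$ is the larger root of $x^2 - 3 q_1 q_3 x + (q_1^2 + q_3^2) = 0$, substituting the explicit formula and squaring reduces this to $q_1^2(3 q_3 - 2) \geq q_3^2$, which holds for $q_1 > q_3 \geq 1$.

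The main obstacle is this final inequality in (iii): it is where the ordering inherent in the Markov tree (``replace the largest to move toward the root'') must be extracted from Markov's equation rather than assumed. Everything else is symbolic manipulation built on \eqref{eq:p1p2}, \eqref{eq:p2p3}, \eqref{eq:p1p3} and Vieta's formulas, together with the completion identities from Lemma~\ref{lem:completion}.
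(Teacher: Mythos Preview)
Your proposal is correct and follows the same overall outline as the paper's proof, but it is more self-contained at two points. For part~(i), the paper dispatches the children and parents in one stroke by invoking the Completion Lemma~\ref{lem:completion}: for the first child, it simply observes that $p_{2}q_{1}-p_{1}q_{2}=q_{3}$ is exactly relation~\eqref{eq:p1p3} for the triple $(q_{1},q_{3}',q_{2})$, so Lemma~\ref{lem:completion}(iii) supplies both the integrality of $p_{3}'$ and the two required identities at once. You instead verify integrality and the two identities by hand; this is equivalent but longer. For part~(iii), the paper cites Cassels for the standard Markov-tree inequalities ($q_{1}',q_{3}'>q_{2}$ when $q_{2}$ is maximal, and $q_{2}'<\max\{q_{1},q_{3}\}$ in the non-singular case), whereas you extract the parent inequality directly from the quadratic $x^{2}-3q_{1}q_{3}x+(q_{1}^{2}+q_{3}^{2})=0$. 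Your route has the advantage of being self-contained; the paper's is shorter because it reuses the Completion Lemma and a classical reference. One small point to tighten: when you assert that $q_{2}$ is the larger root, this is not entirely immediate from the centering hypothesis alone --- the cleanest justification is to evaluate the quadratic at $\max\{q_{1},q_{3}\}$ and observe the value is negative in the non-singular case, which places $\max\{q_{1},q_{3}\}$ strictly between the two roots.
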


For a proof, see Section~\ref{sec:pf_lem_forrest}.

\subsubsection{Proof of Lemma~\ref{lem:completion} (Completion of rational Markov triples)}
\label{sec:pf_completion_lemma}

To see part (i) of Lemma~\ref{lem:completion}, first note that
$q_{2}$ divides $p_{2}^{2}+1$ because $q_{2}$ and $q_{1}$ are coprime
and
\begin{equation*}
  q_{1}^{2}\;(\,p_{2}^{2}\,+\,1\,)\;
  \overset{\eqref{eq:p1p2}}{=}\;(p_{1}q_{2}+q_{3})^{2}+q_{1}^{2}
  \overset{\,\eqref{eq:Markov}\,}{=}\;
  q_{2}\;(\,p_{1}^{2}\,q_{2}\,+\,2\,p_{1}\,q_{3}\,+\,3\,q_{1}\,q_{3}\,-\,q_{2}\,)\,.
\end{equation*}
A straightforward calculation yields equation~\eqref{eq:p2p3}:
\begin{equation*}
  p_{3}\,q_{2}\,-\,p_{2}\,q_{3}\;
  \overset{\eqref{eq:p3}}{=}\;
  p_{2}\,(\,p_{2}\,q_{1}\,-\,p_{1}\,q_{2}\,)\,+\,q_{1}\,-\,p_{2}\,q_{3}\;
  \overset{\eqref{eq:p1p2}}{=}\; q_{1}
\end{equation*}
Statement (ii) can be proved in the same way.

For the integrality statement of (iii), consider the identity
\begin{equation*}
  q_{1}\,(\,p_{1}\,q_{1}\,+\,p_{3}\,q_{3}\,)\;
  \overset{\eqref{eq:p1p3}}{=}\;
  p_{1}\,(\,q_{1}^{2}\,+\,q_{3}^{2})\,+\,q_{2}'\,q_{3}\;
  \overset{\eqref{eq:qiprime}}{=}\;
  q_{2}'\,(\,p_{1}\,q_{2}\,+\,q_{3}\,)\,. 
\end{equation*}
So $q_{2}'$ divides $(p_{1}q_{1}+p_{3}q_{3})$ because $q_{1}$ and
$q_{2}'$ are coprime as elements of the Markov triple
$(q_{1},q_{2}',q_{3})$.

To verify equations~\eqref{eq:p1p2} and~\eqref{eq:p2p3}, use the
equivalent forms~\eqref{eq:p1p2_alt}: With
\begin{equation*}
  \frac{p_{2}}{q_{2}}
  \;\overset{\eqref{eq:p2}}{=}\;
  \frac{p_{1}\,q_{1}\,+\,p_{3}\,q_{3}}{q_{2}\,q_{2}'}
  \;\overset{\eqref{eq:qiprime}}{=}\;
  \frac{p_{1}\,q_{1}\,+\,p_{3}\,q_{3}}{q_{1}^{2}\,+\,q_{3}^{2}}
\end{equation*}
we obtain
\begin{equation*}
  \frac{p_{2}}{q_{2}}\,-\,\frac{p_{1}}{q_{1}}\;
  \overset{\eqref{eq:p2}}{=}\;
  \frac{(p_{3}\,q_{1}\,-\,p_{1}\,q_{3})\,q_{3}}{(\,q_{1}^{2}\,+\,q_{3}^{2}\,)\,q_{1}}\;
  \overset{\eqref{eq:p1p3}}{=}\;
  \frac{q_{2}'\,q_{3}}{(\,q_{1}^{2}\,+\,q_{3}^{2}\,)\,q_{1}}
  \overset{\eqref{eq:qiprime}}{=}\;
  \frac{q_{3}}{q_{1}\,q_{2}}\,,
\end{equation*}
proving~\eqref{eq:p1p2}. An analogous calculation
proves~\eqref{eq:p2p3}.

Finally, since it has been established that
$\big(\frac{p_{1}}{q_{1}},\frac{p_{2}}{q_{2}},\frac{p_{3}}{q_{3}}\big)$
is a rational Markov triple, the uniqueness of the completion
follows from equations~\eqref{eq:p1p2_alt}.\qed

\subsubsection{Proof of Lemma~\ref{lem:children_parents} (Forest of centered rational Markov triples)}
\label{sec:pf_lem_forrest}

Parts (i) and (ii) of Lemma~\ref{lem:children_parents} can be proved
by applying the Completion-Lemma~\ref{lem:completion}
appropriately. For example, to see that the first child is a rational
Markov triple, let
$\frac{\tilde{p}_{1}}{\tilde{q}_{1}}=\frac{p_{1}}{q_{1}}$ and
$\frac{\tilde{p}_{3}}{\tilde{q}_{3}}=\frac{p_{2}}{q_{2}}$. Then
\begin{equation*}
  \tilde{p}_{3}\,\tilde{q}_{1}\,-\,\tilde{p}_{1}\,\tilde{q}_{3}
  \;=\;q_{3}\;=:\;\tilde{q}_{2}'\,,
\end{equation*}
and Lemma~\ref{lem:completion} (iii) says that
\begin{equation*}
  \left(\,
    \frac{\tilde{p}_{1}}{\tilde{q}_{1}},\;
    \frac{\tilde{p}_{2}}{\tilde{q}_{2}},\;
    \frac{\tilde{p}_{3}}{\tilde{q}_{3}}\,
  \right)
  =
  \left(\,
    \frac{p_{1}}{q_{1}},\;
    \frac{p_{3}'}{q_{3}'},\;
    \frac{p_{2}}{q_{2}}\,
  \right)
\end{equation*}
is a rational Markov triple.

Statement (iii) follows from the same properties of Markov triples and
their neighbors that produce the tree of Markov
numbers~\cite{Cassels57}: If $q_{2}\geq\max\{q_{1},q_{3}\}$, then
$q_{1}'>q_{2}$ and $q_{3}'>q_{2}$, so both children of a centered
rational Markov triple are centered. If $q_{2}>\max\{q_{1},q_{3}\}>1$,
then $\min\{q_{1},q_{3}\}\leq q_{2}'<\max\{q_{1},q_{3}\}$, so exactly
one parent of a non-singular centered rational Markov triple is
centered.

Therefore, starting with any non-singular centered rational Markov
triple and traversing up the parental line along centered triples, one
will eventually reach a singular root triple
$(n, n+\frac{1}{2}, n+1)$.

The generating rule~\eqref{eq:pqprime} is equivalent to the equations
for children.  \qed

\subsubsection{Proof of Theorem~\ref{thm:Markov_best_approximants} (i).
  Every Markov fraction is the middle element of a unique centered
  triple}
\label{sec:pf_thm_unique_centered_triple}

By definition, every Markov fraction $\frac{p}{q}$ is contained in some
rational Markov triple. If $q$ is not the largest denominator in that
triple, iteratively replace the triple with the parent or child in
which the fraction with largest denominator is removed until~$q$ is the
largest denominator. If $\frac{p}{q}$ is not the middle element in
this triple then it is the middle element in one of the adjacent
triples~\eqref{eq:Zshift}. This shows that there exists a centered
rational Markov triple $m$ with $\frac{p}{q}$ in the middle.

To see that $m$ is the unique centered rational Markov triple with
$\frac{p}{q}$ in the middle, consider first the tree of centered
rational Markov triples that contains $m$. All descendants of $m$ have
a larger denominator in the middle, and all ancestors have a smaller
denominator in the middle. So no other centered triple in the same
tree has $\frac{p}{q}$ in the middle. But each tree in the forest of
centered rational Markov triples contains only Markov fractions in a
unique integer interval $[n,n+1]$. So if a Markov fraction is
contained in two centered rational Markov triples of different trees,
then it is an integer, and it is the last element in one triple and
the first element in the other. This shows that no two centered
rational Markov triples have the same middle element.  \qed

\subsection{Companions of Markov fractions}
\label{sec:companions_alg}

This section deals with the companions of Markov fractions (see
Definition~\ref{def:companion}) from the algebraic point of
view. Proofs in which we use the geometric interpretation are deferred
to Section~\ref{sec:geometric_pov}. 

We will often use the following lemma implicitly (e.g., in the
statement of Theorem~\ref{thm:companion_approximation}).

\begin{lemma}
  \label{lem:companion_reduced}
  The common denominator on the right hand side of
  equation~\eqref{eq:companions} is indeed $q\,u_{k}$, so the fraction representation
  \begin{equation*}
    \gamma^{\pm}_{k}\,\Big(\frac{p}{q}\Big)\;=\;
    \frac{p\,u_{k}\pm u_{k-1}}{q\,u_{k}}
  \end{equation*}
  is reduced.
\end{lemma}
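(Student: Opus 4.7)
The plan is to show that no prime divisor of the putative denominator $q\,u_k$ divides the putative numerator $p\,u_k \pm u_{k-1}$. Two consecutive values of the sequence $(u_k)$ are coprime, and the residues of $(u_k)$ modulo $q$ are highly constrained, so the verification splits cleanly into a few cases.

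First I would show, by induction on $k$ using the recursion~\eqref{eq:un}, that $\gcd(u_k, u_{k-1}) = 1$ for all $k \geq 1$: the base case $\gcd(u_1, u_0) = \gcd(1,0) = 1$ is clear, and $\gcd(u_{k+1}, u_k) = \gcd(3q\,u_k - u_{k-1}, u_k) = \gcd(-u_{k-1}, u_k) = \gcd(u_{k-1}, u_k)$. Next I would reduce the recursion modulo $q$: it becomes $u_{k+1} \equiv -u_{k-1} \pmod{q}$, so a straightforward induction from $u_0 = 0$, $u_1 = 1$ gives
\begin{equation*}
  u_{k} \;\equiv\; 0 \pmod{q} \;\text{ for }k\text{ even},
  \qquad
  u_{k} \;\equiv\; \pm 1 \pmod{q} \;\text{ for }k\text{ odd}.
\end{equation*}

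With these two facts in hand, I would take a prime $r$ dividing $q\,u_k$ and show that $r$ does not divide $p\,u_k \pm u_{k-1}$. If $r \mid u_k$, then $p\,u_k \pm u_{k-1} \equiv \pm u_{k-1} \pmod r$, and $r \nmid u_{k-1}$ by the coprimality of consecutive terms, so $r$ does not divide the numerator. If instead $r \mid q$, split on the parity of $k$: when $k$ is even we have $u_k \equiv 0$ and $u_{k-1} \equiv \pm 1 \pmod{r}$, so $p\,u_k \pm u_{k-1} \equiv \pm 1 \pmod{r}$; when $k$ is odd we have $u_{k-1} \equiv 0$ and $u_k \equiv \pm 1 \pmod{r}$, so $p\,u_k \pm u_{k-1} \equiv \pm p \pmod{r}$, which is nonzero because $p/q$ is reduced and hence $r \nmid p$. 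In every case $r$ fails to divide the numerator, proving $\gcd(p\,u_k \pm u_{k-1},\,q\,u_k) = 1$.

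There is no real obstacle here; the only subtlety is that when $k$ has the ``wrong'' parity the factors $q$ and $u_k$ of the denominator share common prime divisors, so one cannot simply combine coprimality of the numerator with $q$ and with $u_k$ multiplicatively. Handling this correctly is exactly what the parity case analysis above accomplishes.
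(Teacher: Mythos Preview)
Your proof is correct, but it takes a genuinely different route from the paper's. The paper defers the lemma to Corollary~\ref{cor:rs_coprime}, which is established in Section~\ref{sec:pf_companions_geom}: there one writes
\[
  \begin{pmatrix} p\,u_k + u_{k-1} \\ q\,u_k \end{pmatrix}
  \;=\;
  T^{k}
  \begin{pmatrix} -1 \\ 0 \end{pmatrix}
\]
for an explicit matrix $T\in\SLTZ$ (and analogously for the minus sign). Coprimality then drops out instantly, since elements of $\SLTZ$ send primitive integer vectors to primitive integer vectors. Your argument is instead a self-contained elementary computation: coprimality of consecutive $u_k$ plus the residue pattern $u_k\equiv 0$ or $\pm 1\pmod q$ according to parity, followed by a short case check. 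Your approach avoids all geometric machinery and could be read in isolation; the paper's approach is essentially free once the $\SLTZ$-orbit description of the companions is in place, and it makes the geometric meaning of the reducedness transparent. One small remark on your write-up: the two cases ``$r\mid u_k$'' and ``instead $r\mid q$'' are not actually disjoint (when $k$ is even and $r\mid q$ you also have $r\mid u_k$), so the word ``instead'' is slightly misleading---but since the computations in both cases are correct, nothing is lost.
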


\begin{proof}
  See Corollary~\ref{cor:rs_coprime}.
\end{proof}

\begin{remark}[Index issues]
  \label{rem:companions_index}
  For $k=1$, equation~\eqref{eq:companions} says
  \begin{equation}
    \label{eq:gamma_1}
    \gamma^{\pm}_{1}\,\Big(\frac{p}{q}\Big)
    \;=\;\frac{p}{q}\,,
  \end{equation}
  But Definition~\ref{def:companion} explicitly requires $k\geq 2$, so
  the Markov fraction $\frac{p}{q}$ is by definition not a companion
  of itself. Consequently, \emph{the $k$-th left and right companions}
  are $\gamma^{+}_{k+1}(\frac{p}{q})$ and
  $\gamma^{-}_{k+1}(\frac{p}{q})$, respectively. One could fix this
  notational nuisance either by changing the definition and counting
  the Markov fractions as their own companions, or by shifting the
  index in the definition of $\gamma^{\pm}_{k}$. The first option is
  bad because Markov fractions and their companions are different both
  regarding Diophantine approximation (compare
  Theorems~\ref{thm:Markov_best_approximants}
  and~\ref{thm:companion_approximation}) and from the geometric point
  of view (compare Lemmas~\ref{lem:Markov_triple_geom}
  and~\ref{lem:companions_geom}). The second option is bad because it
  would create other nuisances. For example, the
  symmetries~\eqref{eq:u_symm} and~\eqref{eq:gamma_symm} of the
  following remark would become more complicated.
\end{remark}

\begin{remark}[Symmetry]
  \label{rem:companions_symmetries}
  The recursion~\eqref{eq:un} can be used to define $u_{k}$ and
  hence $\gamma^{\pm}_{k}$ not only for $k\geq 0$ but for all
  $k\in\Z$. Then
  \begin{equation}
    \label{eq:u_symm}
    u_{-k}=-u_{k}  
  \end{equation}
  and this implies
  $\gamma^{\pm}_{0}\big(\frac{p}{q}\big)=\sminfty$ and the following
  symmetry relating $\gamma^{+}$ and $\gamma^{-}$:
  \begin{equation}
    \label{eq:gamma_symm}
    \gamma^{+}_{-k}\,\Big(\frac{p}{q}\Big)
    \;=\;
    \gamma^{-}_{k}\,\Big(\frac{p}{q}+3\Big)
  \end{equation}
\end{remark}

The following properties of the sequence $(u_{k})_{k\in\Z}$ are easily verified
by induction.

\begin{lemma}[Properties of $u_{k}$]
  \label{lem:u_prop}
  For any Markov number $q$, the sequence $(u_{k})_{k\in\Z}$ defined
  by~\eqref{eq:un} is
  \begin{compactenum}[\bf (i)]
  \item strictly increasing and
  \item satisfies the equation
    \begin{equation}
      \label{eq:u_conserved}
      u_{k+1}^{2}\,-\,3\,q\,u_{k+1}\,u_{k}\,+\,u_{k}^{2}\;=\;1
    \end{equation}
    for all $k\in\Z$.
  \end{compactenum}
\end{lemma}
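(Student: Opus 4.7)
My plan is to handle (ii) first by exhibiting a quadratic form that is invariant under the recursion, then deduce (i) by an induction for $k\geq 0$ combined with the symmetry $u_{-k}=-u_k$ to extend to negative indices.

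For part (ii), I would introduce the quadratic form $Q(a,b)=a^{2}-3q\,ab+b^{2}$ and check by direct algebraic substitution that the recursion preserves it, i.e., $Q(u_{k+1},u_{k})=Q(u_{k},u_{k-1})$. Concretely, substituting $u_{k+1}=3q\,u_{k}-u_{k-1}$ into $u_{k+1}^{2}-3q\,u_{k+1}u_{k}+u_{k}^{2}$ and expanding yields
\begin{equation*}
(3q\,u_{k}-u_{k-1})^{2}-3q(3q\,u_{k}-u_{k-1})u_{k}+u_{k}^{2}
\;=\;u_{k-1}^{2}-3q\,u_{k-1}u_{k}+u_{k}^{2},
\end{equation*}
after the $9q^{2}u_{k}^{2}$ terms cancel. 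Since $Q(u_{1},u_{0})=1-0+0=1$, induction on $k\geq 0$ gives equation~\eqref{eq:u_conserved} for all $k\geq 0$. To extend to $k<0$, the same argument applies to the reversed recursion $u_{k-1}=3q\,u_{k}-u_{k+1}$, so $Q$ is also preserved running backward; alternatively, one observes that $Q(-u_{k},-u_{k-1})=Q(u_{k},u_{k-1})$ combined with the symmetry $u_{-k}=-u_{k}$ discussed below.

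For part (i), the first step is to verify the symmetry $u_{-k}=-u_{k}$ by induction: it holds for $k=0,1$, and the backward recursion gives $u_{-1}=3q\cdot 0-1=-1=-u_{1}$; inductively, if $u_{-j}=-u_{j}$ for $0\leq j\leq k$, then $u_{-(k+1)}=3q\,u_{-k}-u_{-(k-1)}=-(3q\,u_{k}-u_{k-1})=-u_{k+1}$. Next I would show by induction that for $k\geq 1$ one has $0<u_{k-1}<u_{k}$: the base case $0=u_{0}<u_{1}=1$ is immediate, and if $0\leq u_{k-1}<u_{k}$ then using $q\geq 1$, hence $3q\geq 3$, we get
\begin{equation*}
u_{k+1}\;=\;3q\,u_{k}-u_{k-1}\;\geq\;3u_{k}-u_{k-1}\;>\;u_{k},
\end{equation*}
since $2u_{k}>u_{k-1}$ follows from $u_{k}>u_{k-1}\geq 0$. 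Together with $u_{-k}=-u_{k}$, strict monotonicity on $k\geq 0$ transfers to strict monotonicity on $k\leq 0$, yielding $u_{k}<u_{k+1}$ for every $k\in\Z$.

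There is no real obstacle here: both parts are mechanical once the invariant $Q$ and the reflection symmetry $u_{-k}=-u_{k}$ are recognized. The only subtlety worth stressing is that the argument uses $q\geq 1$ (true for every Markov number), so that $3q\,u_{k}-u_{k-1}$ exceeds $u_{k}$ at each step; and that one must invoke the symmetry to get strict monotonicity across the index change from negative to non-negative $k$, rather than trying to rerun a one-sided induction in the reverse direction.
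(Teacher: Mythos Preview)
Your argument is correct and is precisely the kind of induction the paper has in mind; the paper itself gives no details beyond the sentence ``easily verified by induction,'' so your write-up fills in exactly what is left to the reader.
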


\begin{remark}[Fibonacci and Pell numbers]
  \label{rem:fibonacci_pell}
  (i) For $q=1$,
  \begin{equation*}
    u_{k}\;=\;F_{2k}\quad\text{and so}\quad
    \gamma^{\pm}_{k}(p)\;=\;p\,\pm\,\frac{F_{2k-2}}{F_{2k}},
  \end{equation*}
  where $(F_{k})$ is the sequence of
  Fibonacci numbers,
  \begin{equation*}
    F_{0}\;=\;0,\quad
    F_{1}\;=\;1,\quad
    F_{k+1}\;=\;F_{k}\,+\,F_{k-1}. 
  \end{equation*}
  (ii) For $q=2$ and odd $p$,
  \begin{equation*}
    2\,u_{k}\;=\;P_{2k}
    \quad\text{and so}\quad
    \gamma^{\pm}_{k}\,\Big(\frac{p}{2}\Big)
    \;=\;
    \frac{p}{2}
    \,\pm\,
    \frac{P_{2k-2}}{2\,P_{2k}}\,,
  \end{equation*}
  where $(P_{k})$ is the sequence of Pell numbers,
  \begin{equation*}
    P_{0}\;=\;0,\quad P_{1}\;=\;1,\quad P_{k+1}\;=\;2\,P_{k}\,+\,P_{k-1}.
  \end{equation*}
 \end{remark}

\begin{theorem}[Best approximants of companions]
  \label{thm:companion_approximation}
  The best approximants of a companion
  $\gamma^{\pm}_{k}\big(\frac{p}{q}\big)$, $k\geq 2$, are precisely
  $\frac{p}{q}$ and $\gamma^{\pm}_{k-1}\big(\frac{p}{q}\big)$, i.e.,
  \begin{equation}
    \label{eq:companion_equal_quality}
    C
    \left(
      \gamma^{\pm}_{k}\Big(\frac{p}{q}\Big)
    \right)
    \;=\;
    q^{2}\,
    \left|\,
      \gamma^{\pm}_{k}\Big(\frac{p}{q}\Big)\,-\,\frac{p}{q}\,
    \right|
    \;=\;
    (q\,u_{k-1})^{2}\,
    \left|\,
      \gamma^{\pm}_{k}
      \Big(\frac{p}{q}\Big)
      \,-\,
      \gamma^{\pm}_{k-1}
      \Big(\frac{p}{q}\Big)\,
    \right|
  \end{equation}
  and 
  \begin{equation*}
    C
    \left(
      \gamma^{\pm}_{k}\Big(\frac{p}{q}\Big)
    \right)
    \;<\;
    b^{2}\,
    \left|\,
      \gamma^{\pm}_{k}\Big(\frac{p}{q}\Big)\,-\,\frac{a}{b}\,
    \right|
  \end{equation*}
  for every rational number $\frac{a}{b}$ except $\frac{p}{q}$,
  $\gamma^{\pm}_{k-1}\big(\frac{p}{q}\big)$, and
  $\gamma^{\pm}_{k}\big(\frac{p}{q}\big)$. Moreover,
  \begin{equation}
    \label{eq:Cgamma}
    C\left(
      \gamma^{\pm}_{k}\,
      \Big(\frac{p}{q}\Big)
    \right)
    \;=\;
    q\,\frac{u_{k-1}}{u_{k}}
    \;\geq\;
    \frac{1}{3}\,,
  \end{equation}
  where equality holds if and only if $k=2$, i.e., for first
  companions.
\end{theorem}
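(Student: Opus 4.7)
The plan is to split the proof into an easy algebraic part and a harder uniqueness part. The easy part establishes the two equalities in \eqref{eq:companion_equal_quality} and the inequality in \eqref{eq:Cgamma}; the hard part shows that no other fraction does as well.

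\textbf{Step 1 (equalities in \eqref{eq:companion_equal_quality}).} By Definition~\ref{def:companion}, $|\gamma^{\pm}_k(p/q) - p/q| = u_{k-1}/(q u_k)$, so $q^2$ times this is $q u_{k-1}/u_k$. For the second equality, a direct calculation gives $\gamma^{\pm}_k - \gamma^{\pm}_{k-1} = \pm(u_{k-1}^2 - u_{k-2} u_k)/(q u_{k-1} u_k)$; eliminating $u_k$ via the recursion $u_k = 3 q u_{k-1} - u_{k-2}$ and then applying Lemma~\ref{lem:u_prop}(ii) yields $u_{k-1}^2 - u_{k-2} u_k = 1$, whence $|\gamma^{\pm}_k - \gamma^{\pm}_{k-1}| = 1/(q u_{k-1} u_k)$ and the second equality. \textbf{Step 2 (inequality in \eqref{eq:Cgamma}).} The same recursion reads $3 q u_{k-1} - u_k = u_{k-2}$, which is $\geq 0$ for $k \geq 2$ and vanishes precisely when $k = 2$ (by Lemma~\ref{lem:u_prop}(i)); so $q u_{k-1}/u_k \geq 1/3$ with equality iff $k = 2$. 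Together, Steps 1 and 2 show $C(\gamma^{\pm}_k(p/q)) \leq q u_{k-1}/u_k$, with the boundary equality already pinned down.

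\textbf{Step 3 (uniqueness).} Writing $y_k = r_k/s_k$ in lowest terms (with $r_k = p u_k + u_{k-1}$, $s_k = q u_k$, coprime by Lemma~\ref{lem:companion_reduced}), for any rational $a/b$ with $b > 0$ the quantity $b^2 |y_k - a/b|$ equals $b|N|/s_k$, where $N := r_k b - s_k a \in \Z$. The strict inequality to prove is $b|N| > q^2 u_{k-1}$ for every pair $(b, N)$ with $b > 0$, $N \neq 0$, and $N \equiv r_k b \pmod{s_k}$, except the two pairs $(q, \pm q u_{k-1})$ (corresponding to $a/b = p/q$) and $(q u_{k-1}, \pm q)$ (corresponding to $a/b = \gamma^{\pm}_{k-1}(p/q)$), which coincide when $k = 2$. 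Both exceptional pairs belong to the sublattice $\Lambda = \{(b, N) \in \Z^2 : N \equiv r_k b \pmod{s_k}\}$ of index $s_k$, by the identities $r_k q - s_k p = q u_{k-1}$ and $r_k s_{k-1} - r_{k-1} s_k = q$ (the second again using $u_{k-1}^2 - u_{k-2} u_k = 1$).

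\textbf{Main obstacle and plan.} Ruling out every other element of $\Lambda$ is the serious step. My plan is to route this through the hyperbolic-geometric dictionary of Corollary~\ref{cor:C_geometric}: the inequality $b|N| > q^2 u_{k-1}$ is equivalent to saying that the vertical geodesic $g_{y_k}$ in $H^2$ is at signed distance strictly greater than $\log(2 q u_{k-1}/u_k)$ from every Ford circle $h(a, b)$ other than those at $p/q$ and $\gamma^{\pm}_{k-1}(p/q)$. Projecting $g_{y_k}$ to the modular torus produces a non-simple cusp-to-cusp geodesic whose combinatorics are encoded by the triangle path of Figure~\ref{fig:snake_companion}; Theorem~\ref{thm:furthest_geodesics} then identifies the unique simple closed geodesic and the unique simple cusp-to-cusp geodesic that this projection avoids, and their lifts are exactly the two distinguished Ford circles, while every other Ford circle corresponds to a cusp-horocycle lift that is crossed by $g_{y_k}$ and hence lies at strictly greater signed distance. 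I expect the geometric route to be the natural one in keeping with the paper's approach; a purely algebraic alternative via a short-vector analysis of $\Lambda$ is awkward because the two short vectors $(q, q u_{k-1})$ and $(q u_{k-1}, q)$ only span an index-$(q u_{k-2})$ sublattice of $\Lambda$, so the residual cosets would require an additional delicate argument (presumably by induction on $k$ using the recursion for $u_k$), and that is where the main technical difficulty would lie if one wished to avoid the modular-torus geometry altogether.
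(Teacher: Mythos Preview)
Your Steps~1 and~2 are correct and match the paper's argument exactly. The gap is in Step~3.

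First, invoking Theorem~\ref{thm:furthest_geodesics} here is circular: in the paper that theorem is proved \emph{together with} the classification Theorem~\ref{thm:classify}, and the proof of Theorem~\ref{thm:classify} explicitly uses Theorem~\ref{thm:companion_approximation} (see the opening sentence of Section~\ref{sec:pf_classify}). So Theorem~\ref{thm:furthest_geodesics} is downstream of the very statement you are trying to prove.

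Second, even ignoring the circularity, the sketch does not do what you claim. Theorem~\ref{thm:furthest_geodesics} identifies a closed geodesic~$c$ and a simple cusp-to-cusp geodesic~$\tilde g$ that $\pi(g_{y_k})$ avoids; their lifts are \emph{geodesics}, not Ford circles, so the sentence ``their lifts are exactly the two distinguished Ford circles'' is a category error. Knowing which geodesics $\pi(g_{y_k})$ avoids does not by itself tell you which horocycles are closest to~$g_{y_k}$. The paper's actual argument (Section~\ref{sec:pf_companion_approximants}) is different and has two pieces you never mention: (a)~separating-geodesic lemmas (Lemmas~\ref{lem:nearest_to_companion} and~\ref{lem:nearest_to_companion2}) showing that if some geodesic in the $\widehat G$-orbit of $g_{y_k}$ or the $G$-orbit of $g_x$ separates $g_{y_k}$ from a candidate~$\tfrac{a}{b}$, then $\tfrac{a}{b}$ cannot be a best approximant --- this cuts the candidate set down to the finite list $x=\gamma_1^+(x),\gamma_2^+(x),\ldots,\gamma_{k-1}^+(x)$; and (b)~an explicit calculation (after conjugating the axis~$\varrho_x$ to a vertical geodesic) showing that the signed distance from $g_{y_k}$ to the horocycle at $\gamma_\ell^+(x)$ is a concave even function of~$\ell$, so the minimum over $1\le\ell\le k-1$ is attained precisely at the endpoints $\ell=1$ and $\ell=k-1$. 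Neither piece is present in your plan, and your lattice reformulation, while correct, does not substitute for them.
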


For a proof, see Section~\ref{sec:pf_companion_approximants}. The
first companions $\gamma_{2}^{\pm}(\frac{p}{q})$ with approximation
constant $C=\frac{1}{3}$ are therefore the best of the worst
approximable rational numbers. They are also distinguished by the
property that the approximation constant is attained for one unique
best approximant, $\frac{p}{q}=\gamma_{1}^{\pm}\big(\frac{p}{q}\big)$,
instead of two.

\begin{examples}
  \textbf{(i)} The first right and left companions of the Markov
  fraction~$\frac{1}{2}$ are $\gamma^{+}_{2}(\frac{1}{2})=\frac{7}{12}$
  (compare Table~\ref{tab:companions}) and
  $\gamma^{-}_{2}(\frac{1}{2})=\frac{5}{12}$. They are best
  approximated by their associated Markov fraction $\frac{1}{2}$ and
  no other rational number. The approximation constant is
  $C(\frac{7}{12})=C(\frac{5}{12})=\frac{1}{3}$.

  \textbf{(ii)} The second right and left companions of $\frac{1}{2}$
  are $\gamma^{+}_{3}(\frac{1}{2})=\frac{41}{70}$ (compare
  Table~\ref{tab:companions}) and
  $\gamma^{-}_{3}(\frac{1}{2})=\frac{29}{70}$. They are best
  approximated by exactly two rational numbers: their associated
  Markov fraction $\frac{1}{2}$ and its first right and left
  companion, respectively. The approximation number is
  $C(\frac{41}{70})=C(\frac{29}{70})=\frac{12}{35}$.
\end{examples}

\section{The geometric point of view}
\label{sec:geometric_pov}

\subsection{The modular torus}
\label{sec:modular_torus}

Let $G<\PSLTZ$ be the commutator subgroup of the modular group
$\PSLTZ$. The group $G$ is freely generated by the two generators
\begin{equation*}
  [A]\;=\;
  \begin{bmatrix}
    1 & -1 \\ -1 & 2
  \end{bmatrix}
  \quad\text{and}\quad
  [B]\;=\;
  \begin{bmatrix}
    1 & 1 \\ 1 & 2
  \end{bmatrix}.
\end{equation*}
It is a normal subgroup of $\PSLTZ$ with index $6$, and it is the only
subgroup that has a once punctured torus as orbit space. This orbit
space $M\;=\;G\backslash H^{2}$ with canonical projection
\begin{equation*}
  \pi\,:\,H^{2}\,\longrightarrow\,M,\quad \pi(z)\;=\;G\, z\,.
\end{equation*}
is called the \emph{modular torus}.
The ideal quadrilateral with vertices $-1$, $0$, $1$, and $\sminfty$
is a fundamental domain, from which one obtains the torus $M$ by
gluing the pairs of opposite sides via the isometries~$[A]$
and~$[B]$, respectively (see
Figure~\ref{fig:fundamental_domain_1-1-1}).
\begin{figure}
  \centering
  \input{fundamental_domain_1-1-1_t}
  \caption{Fundamental domain of the modular torus with gluing
    transformations}
  \label{fig:fundamental_domain_1-1-1}
\end{figure}

The Farey triangulation of the projective plane~$H^{2}$ projects to
the most symmetric ideal triangulation of~$M$ (see
Figure~\ref{fig:farey_triangulation}). The Ford circles (the unshaded
circles in Figure~\ref{fig:ford}) project to the
boundary of the largest embedded cusp neighborhood in~$M$.

\begin{figure}
  \centering
  \input{farey_triangulation_t}
  \caption{The Farey triangulation. Any two adjacent triangles form a
    fundamental domain of $G$. Edges labeled with the same letter
    project to the same edge in the modular torus~$M$. An element of
    $\PSLTZ$ belongs to $G$ if and only if it respects edge labels and
    triangle shadings. An element of $\PSLTZ$ corresponds to the
    hyperelliptic involution of~$M$ if and only if it respects edge
    labels but maps shaded triangles to unshaded ones and vice versa.}
  \label{fig:farey_triangulation}
\end{figure}

\subsection{The geometric characterization of rational Markov triples and
  Markov fractions}
\label{sec:markov_frac_geometric}

Rational Markov triples correspond to the fundamental domains of the
modular torus~$M$ that are ideal quadrilaterals with one vertex
at~$\sminfty$. This is illustrated in
Figure~\ref{fig:fundamental_domain_1-1-1} for the triple $(-1,0,1)$,
in Figure~\ref{fig:fundamental_domain_1-5-2}\
\begin{figure}
  \centering
  \input{fundamental_domain_1-5-2_t}
  \bigskip
  \input{fundamental_domain_5-29-2_t}
  \caption{Fundamental domains corresponding to the centered rational
    Markov triples $\big(\frac{0}{1}, \frac{2}{5}, \frac{1}{2}\big)$
    and $\big(\frac{2}{5},\frac{12}{29},\frac{1}{2}\big)$ of
    Example~\ref{exa:optimal}. The approximation constants of
    $\frac{2}{5}$ and $\frac{12}{29}$ are
    $e^{\frac{1}{2}d}=C\big(\frac{2}{5}\big)=\frac{2}{5}$ and
    $e^{\frac{1}{2}d'}=C\big(\frac{12}{29}\big)=\frac{10}{29}$,
    respectively (see Lemma~\ref{lem:d_vert_horo} and
    Corollary~\ref{cor:C_geometric}). The signed distances $d$ and
    $d'$ are negative because the vertical geodesics at $\frac{2}{5}$
    and $\frac{12}{29}$ intersect the Ford circles at the best
    approximants.}
  \label{fig:fundamental_domain_1-5-2}
\end{figure}
for the triples $(\frac{0}{1},\frac{2}{5},\frac{1}{2})$ and
$(\frac{2}{5},\frac{12}{29},\frac{1}{2})$, and schematically in
Figure~\ref{fig:fundamental_domain_schematic}.
\begin{figure}
  \centering
  \input{fundamental_domain_schematic_t}
  \caption{If
    $(\frac{p_{1}}{q_{1}},\frac{p_{2}}{q_{2}},\frac{p_{3}}{q_{3}})$ is
    a rational Markov triple, then the ideal quadrilateral with
    vertices $\frac{p_{1}}{q_{1}}$, $\frac{p_{2}}{q_{2}}$,
    $\frac{p_{3}}{q_{3}}$, and $\sminfty$ is a fundamental domain of
    the modular torus. \emph{Dashed:} If we cut along the diagonal
    $q_{2}$ and glue the left ideal triangle
    $\frac{p_{1}}{q_{1}},\frac{p_{2}}{q_{2}},\sminfty$ back to the
    right ideal triangle
    $\frac{p_{2}}{q_{2}},\frac{p_{3}}{q_{3}},\sminfty$, but along edge
    $q_{3}$, we obtain another fundamental domain, which corresponds
    to the rational Markov triple
    $(\frac{p_{2}}{q_{2}},\frac{p_{3}}{q_{3}},\frac{p_{1}}{q_{1}}+3)$
    (compare equation~\eqref{eq:Zshift}).}
  \label{fig:fundamental_domain_schematic}
\end{figure}

\begin{remark}
  \label{rem:fake_geometry}%
  For the sake of clearer pictures,
  Figures~\ref{fig:fundamental_domain_schematic},
  \ref{fig:fundomain_child}, and~\ref{fig:companions_geometric}
  do not show the correct geometry of the modular torus~$M$ but some
  other hyperbolic tori with one cusp. As a result, the objects in
  these figures are more evenly sized and intersect each other less
  than would be the case in the modular torus.
\end{remark}

To put the correspondence between rational Markov triples and
fundamental domains more precisely, we formulate the following lemma:

\begin{lemma}[Geometric characterization of rational Markov triples]
  \label{lem:Markov_triple_geom}
  The following statements for real numbers
  $x_{1}<x_{2}<x_{3}$ are equivalent:

  \begin{compactenum}[(i)]
    \item $(x_{1},x_{2},x_{3})$ is a rational Markov triple.
    \item The ideal quadrilateral with vertices
      $x_{1},x_{2},x_{3},\sminfty$ is a fundamental domain of the
      modular torus~$M$.
    \item The ideal triangles with vertices $x_{1},x_{2},\sminfty$ and
      $x_{2},x_{3},\sminfty$, respectively, project to the two
      triangles of some ideal triangulation of the modular torus~$M$.
  \end{compactenum}
\end{lemma}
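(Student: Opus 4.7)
The plan is to prove the three-way equivalence by establishing $\text{(ii)} \Leftrightarrow \text{(iii)}$, $\text{(i)} \Rightarrow \text{(ii)}$, and $\text{(ii)} \Rightarrow \text{(i)}$. The direction $\text{(ii)} \Leftrightarrow \text{(iii)}$ reduces to an Euler-characteristic count: since $\chi(M) = -1$ and a triangulation with one ideal vertex satisfies $V - E + F = -1$ and $3F = 2E$, every ideal triangulation of $M$ has $F = 2$ and $E = 3$. Cutting the ideal quadrilateral $(x_1,x_2,x_3,\sminfty)$ along its diagonal $x_2\sminfty$ produces two ideal triangles; their projection is an ideal triangulation of $M$ precisely when the four boundary edges are identified pairwise by $G$, i.e., precisely when the quadrilateral is a fundamental domain. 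Moreover, since the once-punctured torus is topologically a $4$-gon with opposite sides identified, the only possible pairing matches each vertical side of the quadrilateral with a non-vertical one.

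For $\text{(i)} \Rightarrow \text{(ii)}$, I would induct on the plane binary forest of centered rational Markov triples from Lemma~\ref{lem:children_parents}, first reducing to the centered case via the $\Z$-shift~\eqref{eq:Zshift}, which permutes the two ideal triangles in $M$ but does not change whether the configuration is a fundamental domain. The base cases are the singular roots $(n-1,n,n+1)$ and $(n,n+\tfrac{1}{2},n+1)$: each reduces to the standard examples $(-1,0,1,\sminfty)$ and $(0,\tfrac{1}{2},1,\sminfty)$ via $T_n(z)=z+n \in \PSLTZ$, which normalises $G$ and hence sends fundamental domains to fundamental domains. For the inductive step, passing from a centered triple $(x_1,x_2,x_3)$ to its child $(x_1,x_3',x_2)$ corresponds to a Whitehead move, realised in $H^2$ by replacing the right ideal triangle $(x_2,x_3,\sminfty)$ of the parent fundamental domain by the $G$-translate $(x_1,x_3',x_2)$ obtained as the image under the element $g\in G$ that pairs the boundary edge $x_3\sminfty$ with $x_1x_2$; identities~\eqref{eq:p3prime} and~\eqref{eq:qiprime} then confirm that $g$ carries $(x_2,x_3,\sminfty)$ to the ideal triangle with the claimed vertices.

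For $\text{(ii)} \Rightarrow \text{(i)}$, assume the quadrilateral is a fundamental domain. By the topological remark above, each vertical side is paired with a non-vertical one. Writing the two pairing elements of $G \subset \PSLTZ$ as integer matrices of determinant $\pm 1$ and equating their actions on the endpoints $\tfrac{p_k}{q_k}=x_k$ forces the relations~\eqref{eq:p1p2} and~\eqref{eq:p2p3}. Requiring additionally that the cusp holonomy around $\sminfty$ coincide with the parabolic generator of the stabiliser of $\sminfty$ in $G$ then yields Markov's equation~\eqref{eq:Markov} on the denominators $(q_1,q_2,q_3)$, completing the identification of $(x_1,x_2,x_3)$ as a rational Markov triple.

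The hardest step will be the inductive case of $\text{(i)} \Rightarrow \text{(ii)}$: producing the element $g \in G$ that realises the child transition and checking that $g \cdot (x_2,x_3,\sminfty) = (x_1,x_3',x_2)$ with the precise vertex coordinates required. The algebraic identities of Lemma~\ref{lem:completion} supply the computational content, but tracking how the four edge-pairings of the fundamental domain transform under the child operation, and ensuring consistency with Lemma~\ref{lem:children_parents}, is where the argument becomes delicate.
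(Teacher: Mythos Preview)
Your proposal is correct and follows essentially the same architecture as the paper's proof: the equivalence (ii)$\Leftrightarrow$(iii) is immediate, the implication (i)$\Rightarrow$(ii)/(iii) is proved by induction on the forest of centered rational Markov triples, and the reverse implication is obtained by reading off the defining relations from the geometry of the fundamental domain.

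There are, however, two differences in execution worth noting. For the inductive step of (i)$\Rightarrow$(iii), the paper does not use the side-pairing element $g\in G$ that you propose; instead it constructs the matrix $S\in\SLTZ$ that exchanges the Ford circles at $\frac{p_1}{q_1}$ and $\frac{p_2}{q_2}$. This $[S]$ is a lift of the elliptic involution, so it lies in $\widehat{G}\setminus G$, and the paper then observes $S\cdot\sminfty=\frac{p_3'}{q_3'}$ to locate the second triangle of the flipped triangulation. Your cut-and-paste via the deck transformation $g\in G$ is an equally valid (and arguably cleaner) way to realise the same edge flip; the computation $g(x_2)=x_3'$ that you would need follows from~\eqref{eq:p1p2},~\eqref{eq:p2p3}, and~\eqref{eq:p3prime}. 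For (iii)$\Rightarrow$(i), the paper takes a more geometric route: it identifies the denominators $q_k$ as the Penner $\lambda$-lengths of the edges (via the Ford circle diameters) and then invokes Penner's interpretation of Markov's equation as the Ptolemy/cusp-width identity for the modular torus; the relations~\eqref{eq:p1p2} and~\eqref{eq:p2p3} come from the horocyclic arc lengths in a decorated ideal triangle (Figure~\ref{fig:ideal_triang}). Your determinant argument gives~\eqref{eq:p1p2} and~\eqref{eq:p2p3} very cleanly, and your cusp-holonomy condition (that the parabolic word in $g_A,g_B$ fixing $\sminfty$ equal $z\mapsto z+6$) is precisely what Penner's identity encodes---so the two arguments are the same in content, with the paper outsourcing the computation to the reference and you proposing to do it by hand.
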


(For a proof, see Section~\ref{sec:pr_lem_Markov_geo}.)

Since every simple geodesic with both ends in the cusp is an edge of
some ideal triangulation of the modular torus~$M$, we get the
following characterization of Markov fractions:

\begin{corollary}[Geometric characterization of Markov fractions]
  \label{cor:Markov_frac_geom}
  For a real number $x\in\R$, the following statements are equivalent:
  \begin{compactenum}[(i)]
  \item $x$ is a Markov fraction.
  \item The vertical geodesic $g_{x}$ joining $x$ and $\sminfty$ in $H^{2}$
    projects to a simple geodesic $\pi(g_{x})$ in the modular torus
    $M$ with both ends in the cusp.
  \end{compactenum}
\end{corollary}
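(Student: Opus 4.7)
The plan is to derive this characterization from Lemma~\ref{lem:Markov_triple_geom} combined with the fact stated just before the corollary: every simple geodesic in $M$ with both ends in the cusp is an edge of some ideal triangulation of $M$.

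For the implication (i)$\Rightarrow$(ii), I would start from a Markov fraction $x$ and use Definition~\ref{def:Markov_fraction}(ii) to place $x$ as one of the entries $x_k$ of some rational Markov triple $(x_1,x_2,x_3)$. Lemma~\ref{lem:Markov_triple_geom}(i)$\Rightarrow$(ii) then identifies the ideal quadrilateral with vertices $x_1, x_2, x_3, \sminfty$ as a fundamental domain of $M$, so each of its four boundary arcs and its diagonal projects to an edge of an ideal triangulation of $M$. The vertical geodesic $g_x = g_{x_k}$ is one of these arcs, and consequently $\pi(g_x)$ is a simple geodesic in $M$ with both ends in the cusp.

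For (ii)$\Rightarrow$(i), I would suppose $\pi(g_x)$ is such a simple geodesic and use the cited fact to extend $\pi(g_x)$ to an ideal triangulation $\mathcal{T}$ of $M$ having $\pi(g_x)$ as one of its three edges. Lifting $\mathcal{T}$ to $H^2$ so that $g_x$ itself is one of the lifted edges, I would take the two ideal triangles of the lifted triangulation adjacent to $g_x$, one on each side, whose third vertices are some $x_1 < x$ and $x_3 > x$. Together with $g_x$ as diagonal these triangles form an ideal quadrilateral $Q$ with vertices $x_1, x, x_3, \sminfty$. Since $\mathcal{T}$ consists of exactly two triangles and $Q$ contains one lift of each, $Q$ is a fundamental domain of $M$, and Lemma~\ref{lem:Markov_triple_geom}(ii)$\Rightarrow$(i) then yields that $(x_1, x, x_3)$ is a rational Markov triple, so $x$ is a Markov fraction.

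The only subtle step is the assertion in the reverse direction that the two lifted triangles adjacent to $g_x$ project to the two distinct triangles of $\mathcal{T}$; if $\pi(g_x)$ had the same triangle of $\mathcal{T}$ on both sides, then $Q$ would double-cover that triangle and fail to be a fundamental domain. For the once-punctured torus $M$ the needed \emph{bipartite} property of ideal triangulations (each edge bordering two distinct triangles) is the main obstacle I see, and I would verify it either from the specific combinatorics used in the proof of Lemma~\ref{lem:Markov_triple_geom} or by a direct argument ruling out a non-bipartite gluing pattern for a hyperbolic ideal triangulation of a once-punctured torus.
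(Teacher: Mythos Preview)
Your proof is correct and matches the paper's approach: the paper gives no separate argument for the corollary, treating it as immediate from Lemma~\ref{lem:Markov_triple_geom} together with the stated fact that every simple cusp-to-cusp geodesic is an edge of some ideal triangulation of~$M$. You have simply spelled out the two implications in the obvious way.

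The bipartiteness concern you raise is legitimate but easy to dispatch for the once-punctured torus. The hyperelliptic involution~$\sigma$ of~$M$ (lifted by $z\mapsto z+3$, as used elsewhere in the paper) fixes each edge of any ideal triangulation setwise while reversing its orientation, hence swaps the two sides of every edge; since $\sigma$ is orientation-preserving of order~$2$ and not the identity on any ideal triangle, it must interchange the two triangles. Thus each edge has distinct triangles on its two sides, and your quadrilateral~$Q$ really is a fundamental domain.
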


\subsection{Proof of Lemma~\ref{lem:Markov_triple_geom} (Geometric characterization of rational Markov triples)}
\label{sec:pr_lem_Markov_geo}

The equivalence of statements (ii) and (iii) of the lemma is
obvious. It remains to show that (i) and~(iii) are equivalent.

\paragraph{(iii)$\Rightarrow$(i).} Assuming (iii) implies that the
vertices $x_{k}\in\R$ are rational,
\begin{equation*}
  x_{k}\;=\;\frac{p_{k}}{q_{k}}\;\in\Q\,,
\end{equation*}
because they project to the ideal point of the modular torus.

The denominators~$q_{k}$ are equal to the weights of the edges of the
triangulation \cite[Ch.~11]{Springborn_Markov} (see
Figure~\ref{fig:fundamental_domain_schematic}). Indeed, the euclidean
diameter of the Ford circle at $\frac{p_{k}}{q_{k}}$ is $q_{k}^{-2}$,
so the distance to the horocycle $\{\im z=1\}$ is
$d_{k}=2\log q_{k}$, and the weight is
$q_{k}=e^{d_{k}/2}$.

By Penner's geometric interpretation of Markov's
equation~\cite{penner87,penner12}, the denominators form a Markov
triple $(q_{1},q_{2},q_{3})$. Finally, the relation between the edge
weights and the horocyclic arcs of a decorated ideal triangle (see
Figure~\ref{fig:ideal_triang})
\begin{figure}
  \begin{minipage}{0.49\textwidth}
    \centering
    \begingroup
\def \globalscale {1.000000}
\begin{tikzpicture}[y=0.80pt, x=0.80pt, yscale=-\globalscale, xscale=\globalscale, inner sep=0pt, outer sep=0pt]
  \begin{scope}[shift={(-268.39,-341.718)}]
    \path[shift={(268.39,341.718)},draw=black,fill=black!10,line cap=rect,line
    join=bevel,line width=0.500pt,miter limit=4.00]
    (78.8672,0.9785) --
    (78.8184,1.0605)arc(0.039:59.987:136.044590 and 136.045) --
    (11.0527,118.7715)arc(240.094:299.884:136.038910 and 136.039) --
    (146.9629,118.7461)arc(120.001:179.923:135.985530 and 135.986) -- cycle;
    
    \path[draw=black,line cap=butt,line join=bevel,line width=0.500pt,miter
    limit=4.00] (347.2577,421.2515) ellipse (2.2170cm and 2.2170cm);
    
    \path[color=black,draw=black,line cap=butt,line join=bevel,line
    width=0.500pt,miter limit=4.00] (294.3342,451.7493) ellipse (0.4929cm and
    0.4929cm);
    
    \path[color=black,draw=black,line cap=butt,line join=bevel,line
    width=0.500pt,miter limit=4.00] (347.3597,371.5814) ellipse (0.8152cm and
    0.8152cm);
    
    \path[color=black,draw=black,line cap=butt,line join=bevel,line
    width=0.500pt,miter limit=4.00] (401.6195,452.6762) ellipse (0.4461cm and
    0.4461cm);
    
    \path[color=black,draw=black,line cap=butt,line join=bevel,line
    width=0.800pt,miter limit=4.00] (279.2122,460.4881) ellipse (0.0135cm and
    0.0135cm);
    
    \path[color=black,draw=black,line cap=butt,line join=bevel,line
    width=0.800pt,miter limit=4.00] (347.2577,342.6965) ellipse (0.0135cm and
    0.0135cm);
    
    \path[color=black,draw=black,line cap=butt,line join=bevel,line
    width=0.800pt,miter limit=4.00] (415.3525,460.4635) ellipse (0.0135cm and
    0.0135cm);
    
    \path[color=black,draw=black,line cap=butt,line join=bevel,line
    width=0.800pt,miter limit=4.00] (311.2088,447.2282) ellipse (0.0135cm and
    0.0135cm);
    
    \path[color=black,draw=black,line cap=butt,line join=bevel,line
    width=0.800pt,miter limit=4.00] (306.7663,439.4821) ellipse (0.0135cm and
    0.0135cm);
    
    \path[color=black,draw=black,line cap=butt,line join=bevel,line
    width=0.800pt,miter limit=4.00] (335.4945,397.9246) ellipse (0.0135cm and
    0.0135cm);
    
    \path[color=black,draw=black,line cap=butt,line join=bevel,line
    width=0.800pt,miter limit=4.00] (359.1031,397.9793) ellipse (0.0135cm and
    0.0135cm);
    
    \path[color=black,draw=black,line cap=butt,line join=bevel,line
    width=0.800pt,miter limit=4.00] (390.1848,441.7627) ellipse (0.0135cm and
    0.0135cm);

    \path[color=black,draw=black,line cap=butt,line join=bevel,line
    width=0.800pt,miter limit=4.00] (386.4734,448.1397) ellipse (0.0135cm and
    0.0135cm);
    
    \path[draw=black,line cap=butt,line join=bevel,line width=1.000pt,miter
    limit=4.00] (335.5023,397.9052)arc(23.949:45.289:136.044590 and 136.045)
    node[midway, above left=3pt]{\small $q_{1}$};
    
    \path[color=black,draw=black,line cap=butt,line join=bevel,line
    width=1.000pt,miter limit=4.00]
    (311.3172,447.2017)arc(254.681:286.626:136.038910 and 136.039)
    node[midway, below=4pt]{\small $q_{3}$};
    
    \path[color=black,draw=black,line cap=butt,line join=bevel,line
    width=1.000pt,miter limit=4.00]
    (390.1180,441.6953)arc(133.280:155.987:135.985530 and 135.986)
    node[midway, above right=3pt]{\small $q_{2}$};

    \path[color=black,draw=black,line cap=butt,line join=bevel,line
    width=1.000pt,miter limit=4.00]
    (358.9509,398.0386)arc(66.341:114.644:28.884882 and 28.885)
    node[midway, below=3pt]{$\frac{q_{3}}{q_{1}q_{2}}$};
  
  

  \end{scope}
\end{tikzpicture}
\endgroup

  \end{minipage}
  \hfill
  \begin{minipage}{0.48\textwidth}
    \centering
    \input{fundamental_domain_children_t}
  \end{minipage}
  \\
  \begin{minipage}[t]{0.47\textwidth}
    \caption{Ideal triangle decorated with horocycles. Relation between
      edge weights~$q_{k}$ and horocyclic arcs}
    \label{fig:ideal_triang}
  \end{minipage}
  \hfill
  \begin{minipage}[t]{0.47\textwidth}
    \caption{Fundamental domain consisting of ideal triangles
      $\frac{p_{1}}{q_{1}},\frac{p_{3}'}{q_{3}'},\sminfty$ and
      $\frac{p_{3}'}{q_{3}'},\frac{p_{2}}{q_{2}},\sminfty$}
    \label{fig:fundomain_child}
  \end{minipage}  
\end{figure}
implies equations~\eqref{eq:p1p2_alt}. Therefore,
$(x_{1},x_{2},x_{3})=\big(\frac{p_{1}}{q_{1}},\frac{p_{2}}{q_{2}},\frac{p_{3}}{q_{3}}\big)$
is a rational Markov triple.

\paragraph{(i)$\Rightarrow$(iii).} The implication is true for any
integer rational Markov triple, i.e., if
$(x_{1},x_{2},x_{3})=(n-1,n,n+1)$ for some $n\in\Z$. We will show: If
the implication is true for a rational Markov triple
$\big(\frac{p_{1}}{q_{1}},\frac{p_{2}}{q_{2}},\frac{p_{3}}{q_{3}}\big)$,
then it also holds for the adjacent triples~\eqref{eq:Zshift} and the
children~\eqref{eq:children}. Since every centered rational Markov
triple is a descendant of an integer triple, and every non-centered
rational Markov triple is adjacent to a centered one, this proves the
implication (i)$\Rightarrow$(iii) for all rational Markov triples.

So let
$\big(\frac{p_{1}}{q_{1}},\frac{p_{2}}{q_{2}},\frac{p_{3}}{q_{3}}\big)$
be a rational Markov triple for which the implication
(i)$\Rightarrow$(iii) holds. To see that the same is true for the
adjacent triples~\eqref{eq:Zshift}, note that $z\mapsto z+6$ is a deck
transformation of the modular torus, and $z\mapsto z+3$ is a lift of
the hyperelliptic involution.

It remains to show that the implication (i)$\Rightarrow$(iii) also
holds for the children~\eqref{eq:children}. We will present the
argument for the left child
$\big(\frac{p_{1}}{q_{1}},\frac{p_{3}'}{q_{3}'},\frac{p_{2}}{q_{2}}\big)$,
see Figure~\ref{fig:fundomain_child}. The argument for the right
child is analogous.

First we choose a matrix $S\in\SLTR$ that satisfies
\begin{equation*}
  S\;
  \begin{pmatrix}
    p_{1} \\ q_{1}
  \end{pmatrix}
  \;=\;
  \pm
  \begin{pmatrix}
    p_{2} \\ q_{2}
  \end{pmatrix}
  \qquad\text{and}\qquad
  S\;
  \begin{pmatrix}
    p_{2} \\ q_{2}
  \end{pmatrix}
  \;=\;
  \pm
  \begin{pmatrix}
    p_{1} \\ q_{1}
  \end{pmatrix},
\end{equation*}
which determines $S$ up to sign. Let us choose
\begin{equation*}
  \begin{split}
    S
    &\;=\;
    \begin{pmatrix}
      p_{2} & p_{1} \\ q_{2} & q_{1}
    \end{pmatrix}
    \cdot
    \begin{pmatrix}
      -p_{1} & p_{2} \\ -q_{1} & q_{2}
    \end{pmatrix}^{-1}
    \\
    &\;=\,
    \frac{1}{p_{2}q_{1}-p_{1}q_{2}}\;
    \begin{pmatrix}
      p_{1}q_{1}+p_{2}q_{2} & -p_{1}^{2}-p_{2}^{2} \\
      q_{1}^{2}+q_{2}^{2} & -p_{1}q_{1}-p_{2}q_{2}
    \end{pmatrix}.
  \end{split}
\end{equation*}
Then the hyperbolic isometry $[S]\in\PSLTR$ maps the Ford circle
$h(p_{1},q_{1})$ at $\frac{p_{1}}{q_{1}}$ to the Ford circle
$h(p_{2},q_{2})$ at $\frac{p_{2}}{q_{2}}$ and vice versa (see
Section~\ref{sec:overview_hyperbolic}). This implies that
$[S]\in\PSLTR$ is a lift of the hyperelliptic involution of the
modular torus $M$, and therefore $S\in\SLTZ$.

Since the ideal triangle with vertices $\frac{p_{1}}{q_{1}}$,
$\frac{p_{2}}{q_{2}}$, $\sminfty$ is a lift of one triangle of an
ideal triangulation of the modular torus, the ideal triangle with
vertices
\begin{equation*}
[S]\cdot\frac{p_{1}}{q_{1}}\;=\;\frac{p_{2}}{q_{2}},\qquad
[S]\cdot\frac{p_{2}}{q_{2}}\;=\;\frac{p_{1}}{q_{1}},\qquad
\text{and}\qquad
[S]\cdot\sminfty
\end{equation*}
is a lift of the other triangle of the triangulation. (Here
$\;\cdot\;$ denotes the isometric action of $\PSLTR$ on $H^{2}$\!.\,)
Applying an edge flip, we obtain a triangulation of the modular torus
whose triangles lift to the ideal triangles with vertices
\begin{equation*}
  \frac{p_{1}}{q_{1}}\,,\quad
  S\cdot\sminfty,\quad
  \sminfty
  \qquad
  \text{and}
  \qquad
  S\cdot\sminfty,\quad
  \frac{p_{2}}{q_{2}}\,,  \quad
  \sminfty,
\end{equation*}
respectively.  Finally, using
equation~\eqref{eq:p1p2} and $(q_{1}^{2}+q_{2}^{2})/q_{3}=q_{3}'$, we
obtain
\begin{equation*}
  S\;
  \begin{pmatrix}
    1 \\ 0
  \end{pmatrix}
  \;=\;
  \begin{pmatrix}
    p_{3}'\\q_{3}'
  \end{pmatrix},
  \quad\text{so}\quad
  S\cdot\sminfty=\frac{p_{3}'}{q_{3}'}
\end{equation*}
and the implication (i)$\Rightarrow$(iii) holds for the child
$\big(\frac{p_{1}}{q_{1}},\frac{p_{3}'}{q_{3}'},\frac{p_{2}}{q_{2}}\big)$,
too. This completes the proof of Lemma~\ref{lem:Markov_triple_geom}.\qed

\subsection{The geometric characterization of companions}
\label{sec:companions_geom}

Figures~\ref{fig:torus3d} and~\ref{fig:companions_geometric}
illustrate the geometric interpretation of the the first right
companion~$\gamma^{+}_{2}(x)$.
\begin{figure}
  \centering
  \input{torus3d_t}
  \caption{Topological sketch of the modular torus $M$ showing the
    geodesic $\pi(g_{x})$ corresponding to a Markov fraction $x$, and
    the simple closed geodesic $\eta$ that does not intersect it. The
    geodesic $\pi(g_{\gamma_{2}^{+}(x)})$ corresponds to the first
    right companion $\gamma_{2}^{+}(x)$. It is contained in the
    cylinder $\pi(R_{x})$ bounded by $\pi(g_{x})$ and $\eta$ and has
    one self-intersection.}
  \label{fig:torus3d}
\end{figure}

Generally, the vertical geodesic $g_{x}$ projects to a simple
geodesic~$\pi(g_{x})$ in the Modular torus~$M$ with both ends in the
cusp (by Corollary~\ref{cor:Markov_frac_geom}). Then there is a
unique simple closed geodesic $\eta$ in $M$ that does not
intersect~$\pi(g_{x})$. The geodesics $\pi(g_{x})$ and $\eta$ separate
$M$ into two cylinders, which are labeled $\pi(R_{x})$ and
$\pi(L_{x})$ in Figure~\ref{fig:torus3d} for reasons that will become
clear in the following Section~\ref{sec:pf_companions_geom}. Be that
as it may, $\pi(R_{x})$ is the cylinder to the right of $\pi(g_{x})$
if $g_{x}$ is oriented in the direction from $x$ to $\sminfty$, and
$\pi(L_{x})$ is the cylinder to the left.
 
Now let $y$ be a right or left companion of $x$, i.e.,
$y=\gamma^{+}_{k}(x)$ or $y=\gamma^{-}_{k}(x)$ for some $k\geq 2$.
Then the vertical geodesic $g_{y}$ projects to a non-simple geodesic
$\pi(g_{y})$ in the cylinder $\pi(R_{x})$ or $\pi(L_{x})$,
respectively, with $k-1$ self-intersections.

Formulating the converse statement is slightly more involved. To
understand why, note first that the isometry $z\mapsto z+6$ of $H^{2}$
is a deck transformation of~$M$ (see
Figure~\ref{fig:farey_triangulation}). Hence the companions of $x$
correspond to the same vertical geodesics in $M$ as the companions of
$x+6n$ for any $n\in\Z$. Similarly, the isometry $z\mapsto z+3$
of~$H^{2}$ projects to the hyperelliptic involution of $M$, which
reverses the orientations of $\pi(g_{x})$ and $\eta$. As a
consequence, the cylinder $\pi(R_{x})$ contains not only the projected
vertical geodesics for the right companions $\gamma^{+}_{k}(x)$, but
also those for $\gamma^{-}_{k}(x+3)$, the left companions of $x+3$
(see Figure~\ref{fig:companions_geometric} and compare
Remark~\ref{rem:companions_symmetries}).

Altogether, the following lemma characterizes the right and left
companions of a Markov fraction geometrically:

\begin{lemma}[Geometric characterization of companions]
  \label{lem:companions_geom}
  Let $x$ be a Markov fraction, so that $\pi(g_{x})$ is a simple
  geodesic in the modular torus $M$ with both ends in the cusp. Let
  $\eta$ be the unique simple closed geodesic in $M$ that does not
  intersect $\pi(g_{x})$. Then for a number $y\in\R$ and the vertical
  geodesic $g_{y}$, the statements (i$^+$) and (ii$^+$) are
  equivalent, and the statements (i$^-$) and (ii$^-$) are equivalent:
  
  \medskip%
  \begin{compactenum}[(i$^+$)]
  \item $y$ is a right companion of $x$, i.e., $y=\gamma^{+}_{k}(x)$ for some $k\geq 2$.
  \item $y\in[x,\, x+\frac{3}{2}]$ and $\pi(g_{y})$ is a geodesic with
    both ends in the cusp that intersects neither $\pi(g_{x})$ nor
    $\eta$.
  \end{compactenum}
  \medskip%
  \begin{compactenum}[(i$^-$)]
  \item $y$ is a left companion of $x$, i.e., $y=\gamma^{-}_{k}(x)$ for some $k\geq 2$.
  \item $y\in[x-\frac{3}{2}, \,x]$ and $\pi(g_{y})$ is a geodesic with
    both ends in the cusp that intersects neither $\pi(g_{x})$ nor
    $\eta$.
  \end{compactenum}
  \medskip%
  The geodesics $\pi(g_{\gamma^{+}_{k}(x)})$ and
  $\pi(g_{\gamma^{-}_{k}(x)})$ have $k-1$ self-intersections.
\end{lemma}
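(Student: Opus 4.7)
My plan is to prove (i$^+$)$\Leftrightarrow$(ii$^+$); the case of left companions is handled by the symmetric construction with the cylinder $\pi(L_x)$ in place of $\pi(R_x)$. The central object is a hyperbolic element $\sigma\in G$ whose axis $\tilde\eta\subset H^2$ is the lift of $\eta$ bounding the component $\tilde R$ of $\pi^{-1}(\pi(R_x))$ adjacent to $g_x$. Since $\pi(R_x)\hookrightarrow M$ is $\pi_1$-injective, $\tilde R$ is a strip bounded on one side by $\tilde\eta$ and on the other side by the $\sigma$-orbit $(\sigma^{n}g_x)_{n\in\Z}$, a chain of cusp-to-cusp arcs lifting $\pi(g_x)$ whose consecutive arcs share a cusp vertex.

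Starting from the matrix generators of Section~\ref{sec:modular_torus} and the centered Markov triple containing $x=p/q$ (Lemma~\ref{lem:Markov_triple_geom}), one verifies that $\sigma$ is represented by the $\SLTZ$-matrix
\begin{equation*}
  \sigma
  \;=\;
  \begin{pmatrix}
    p+3q & -(p^{2}+1)/q-3p \\ q & -p
  \end{pmatrix}
\end{equation*}
with $\trace\sigma=3q$ and $\sigma\cdot x=\infty$. The integrality of $(p^{2}+1)/q$ is Corollary~\ref{cor:quadratic_residue}, while the membership of $\sigma$ in $G$ (and not merely in $\PSLTZ$) is enforced by the structure of the centered Markov triple. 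Since $\det\sigma=1$, Cayley--Hamilton yields $\sigma^{n+1}=3q\,\sigma^{n}-\sigma^{n-1}$, matching the recursion~\eqref{eq:un} for $u_k$ exactly. A direct computation then gives $\sigma^{-k}\cdot\infty=\gamma^{+}_{k}(x)$, identifying the right companions of $x$ with the backward $\sigma$-orbit vertices of the boundary chain of $\tilde R$. The axis endpoints $\tilde\eta\cap\partial H^{2}=\{x+\delta_{q},\,x+3-\delta_{q}\}$ are the accumulation points of the two chain tails; crucially $\delta_{q}<\tfrac{3}{2}<3-\delta_{q}$.

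For (i$^+$)$\Rightarrow$(ii$^+$), the vertical geodesic $g_{\gamma^{+}_{k}(x)}=g_{\sigma^{-k}\cdot\infty}$ joins two cusp vertices of the chain of $\tilde R$, lies inside $\tilde R$, and hence avoids every lift of $\pi(g_x)\cup\eta$; its projection is thus disjoint from $\pi(g_x)\cup\eta$. The interval bound $\gamma^{+}_{k}(x)\in[x,x+\tfrac{3}{2}]$ follows from Lemma~\ref{lem:u_prop}(i): the ratio $u_{k-1}/u_{k}$ is strictly increasing in $k$ and bounded above by $q\delta_{q}$, so $\gamma^{+}_{k}(x)-x<\delta_{q}<\tfrac{3}{2}$. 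For the converse, suppose $y\in[x,x+\tfrac{3}{2}]\cap\Q$ with $\pi(g_{y})$ disjoint from $\pi(g_x)\cup\eta$. If $y>x+\delta_{q}$, then the vertical line $g_{y}$ has $x$-coordinate in $(x+\delta_{q},x+3-\delta_{q})$, the real-axis projection of $\tilde\eta$, so $g_{y}$ crosses $\tilde\eta$ --- contradicting disjointness from $\eta$. Hence $y\leq x+\delta_{q}$ and $g_{y}\subset\overline{\tilde R}$; since $g_{y}$ does not cross any chain arc, its endpoint $y$ must itself be a chain vertex $\sigma^{-k}\cdot\infty$ for some $k\geq 1$, and the case $k=1$ gives $y=x$, which is excluded, so $k\geq 2$ and $y=\gamma^{+}_{k}(x)$.

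The self-intersection count is a combinatorial consequence of the same picture: $\pi(g_{\gamma^{+}_{k}(x)})$ has one self-intersection for each $n\geq 1$ such that $g_{\gamma^{+}_{k}(x)}$ and $\sigma^{n}g_{\gamma^{+}_{k}(x)}$ cross in $H^{2}$. Their ideal endpoints $\{\sigma^{-k}\cdot\infty,\,\infty\}$ and $\{\sigma^{n-k}\cdot\infty,\,\sigma^{n}\cdot\infty\}$ interleave on $\partial H^{2}$ precisely when $1\leq n\leq k-1$ (using the cyclic order of the chain vertices, indexed $\ldots,-2,-1,0,1,2,\ldots$ and wrapping around through the two accumulation points of $\tilde\eta$), so the number of self-intersections is exactly $k-1$. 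The principal obstacle is the explicit identification of $\sigma$ as an element of $G$: once $\trace\sigma=3q$ and $\sigma(x)=\infty$ have been pinned down via the centered Markov triple, the remainder of the argument reduces to the Cayley--Hamilton identity $\sigma^{2}=3q\,\sigma-I$ and the interleaving combinatorics on the boundary chain.
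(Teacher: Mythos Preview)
Your proof is correct and follows essentially the same route as the paper's: your $\sigma$ is the paper's $T^{-1}$, and where the paper conjugates $T$ by $U=\bigl(\begin{smallmatrix}1&p\\0&q\end{smallmatrix}\bigr)$ to reach the companion-matrix recursion, you invoke Cayley--Hamilton for the same effect. The identification of the chain vertices $\sigma^{-k}\cdot\infty=\gamma^{+}_{k}(x)$, the description of $\tilde R$ as the strip between the axis and the $\sigma$-orbit of $g_{x}$, and the interleaving count of self-intersections all mirror the paper's argument in Section~\ref{sec:pf_companions_geom}; the only point you leave slightly implicit is why $\sigma\in G$ rather than merely $\PSLTZ$, which the paper handles by \emph{defining} $[T]$ geometrically as the generator of the deck group of $\tilde R\to\pi(R_{x})$ and then computing its matrix.
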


\subsection{Proof of Lemma~\ref{lem:companions_geom} (Geometric
  characterization of companions)}
\label{sec:pf_companions_geom}

The geodesics in the $G$-orbit $G\cdot g_{x}$ of the vertical
geodesic~$g_{x}$ together with~$\pi^{-1}(\eta)$, the $G$-orbit of
lifts of the closed geodesic~$\eta$, separate the hyperbolic
plane~$H^{2}$ into simply connected domains. Each one of these domains
covers one of the two cylinders into which~$\pi(g_{x})$ and~$\eta$
separate~$M$. Of these domains, let~$R_{x}$ and~$L_{x}$ be the ones to
the right and to the left of~$g_{x}$, respectively, when~$g_{x}$ is
oriented in the direction from~$x$ to $\sminfty$ (see
Figure~\ref{fig:companions_geometric}).
\begin{figure}
  \centering
  \input{companions_geometric_t}
  \caption{The right companions $\gamma^{+}_{2}(x),\,\ldots\,$ are
    ideal vertices of region $R_{x}$ between $x$ and $x+\frac{3}{2}$.
    The left companions $\gamma^{-}_{2}(x),\,\ldots\,$ are ideal
    vertices of the region $L_{x}$ between $x-\frac{3}{2}$ and
    $x$. The dotted lines are the geodesics
    $[T]^{k}\cdot g_{\gamma^{+}_{2}(x)}$.}
  \label{fig:companions_geometric}
\end{figure}
In short, $R_{x}$ and~$L_{x}$ are the domains to the left and right
of~$g_{x}$ such that the restrictions~$\pi|_{R_{x}}$
and~$\pi|_{L_{x}}$ are universal covers of the two cylinders in $M$.

The geometric characterization of the companions $\gamma^{\pm}_{k}(x)$
follows in a straightforward manner from an explicit analytic
description of the domains~$R_{x}$ and~$L_{x}$. We will perform the
calculations only for~$R_{x}$ and the right companions
$\gamma^{+}_{k}(x)$. The domain~$L_{x}$ and the left companions
$\gamma^{-}_{k}(x)$ can be treated analogously. Alternatively, one may
use the fact that $z\mapsto z+3$ projects to the hyperelliptic
involution to see that $L_{x}=R_{x}-3$ (compare
Remark~\ref{rem:companions_symmetries}).

In order to describe the boundary of the domain $R_{x}$, note first
that the cylinder $\pi(R_{x})\subset M$ has two geodesic boundary
components. One is the closed geodesic~$\eta$, and the other is the
geodesic $\pi(g_{x})$ with both ends in the cusp of $M$. Accordingly,
the boundary of the universal cover $R_{x}$ of this cylinder consists
of two parts (compare Figure~\ref{fig:companions_geometric}):
\begin{compactenum}[(1)]
\item a hyperbolic line $\varrho_{x}$ that is the universal cover of
  the closed geodesic $\eta$, and
\item a doubly infinite sequence of hyperbolic lines in the $G$-orbit
  of $g_{x}$ including~$g_{x}$ itself, that form a doubly infinite
  ideal polygon.
\end{compactenum}
Since $\pi(R_{x})$ is a cylinder with fundamental group $\Z$, the group
of deck transformations of the universal cover
$R_{x}\rightarrow\pi(R_{x})$ is generated by a hyperbolic isometry
\begin{equation*}
  [T]\;\in\; G\;\subset\;\PSLTZ,
\end{equation*}
which maps the hyperbolic line $\varrho_{x}$ to itself and each edge
of the ideal polygon (2) to the next. In particular, $[T]$ is a
hyperbolic translation with axis $\varrho_{x}$, the edges of the ideal
polygon are $[T]^{k}\cdot g_{x}$ for $k\in\Z$, and therefore the ideal
vertices of the polygonal boundary component accumulate at the
endpoints of the axis $\varrho_{x}$.

One geodesic in the polygonal boundary of $R_{x}$ is known by
definition, the vertical geodesic $g_{x}$.  Another one is the vertical
geodesic $g_{x+3}$. To see this, note that since $\pi(g_{x})$ is
simple geodesic in the modular torus $M$ with both ends in the cusp,
there is an ideal triangulation of $M$ consisting of $\pi(g_{x})$ and
two other such geodesics. Now contemplating
Figure~\ref{fig:fundamental_domain_schematic} with
$x=\frac{p_{1}}{q_{q}}$, one sees that $g_{x}$ and $g_{x+3}$ are
adjacent edges in the ideal polygonal boundary of $R_{x}$ sharing the
common vertex $\infty$.

To derive an explicit expression for the generator $[T]\in\PSLTZ$, we
let
\begin{equation*}
x=\frac{p}{q},
\end{equation*}
and we may assume that $[T]$ maps $g_{x+3}$ to $g_{x}$. (The the other
generator is $[T]^{-1}$, which maps $g_{x}$ to $g_{x+3}$.) This means
not only that $[T]$ maps the endpoints of $g_{x+3}$ ($x+3$
and $\sminfty$) to the corresponding endpoints  of
$g_{x}$ ($\infty$ and $x$). But the isometry~$[T]$ also maps the Ford circles at these
corresponding endpoints to each other, i.e., $h(p+3q, q)$ to $h(1,0)$
and $h(1,0)$ to $h(p,q)$, which determines the matrix $T\in\SLTZ$ up
to sign~(see Section~\ref{sec:overview_hyperbolic} and
\cite[Ch.~5]{Springborn_Markov}). Choosing
\begin{equation*}
  T\;
  \begin{pmatrix}
    1 & p+3\,q \\
    0 & q
  \end{pmatrix}
  \;=\;
  \begin{pmatrix}
    -p & 1 \\
    -q & 0
  \end{pmatrix},
\end{equation*}
which works because
\begin{equation*}
  \det\,
  \begin{pmatrix}
    1 & p+3\,q \\
    0 & q
  \end{pmatrix}
  \;=\;q\;=\;
  \det\,
  \begin{pmatrix}
    -p & 1 \\
    -q & 0
  \end{pmatrix}
\end{equation*}
we obtain
\begin{equation}
  \label{eq:T}
  T\;=\;
  \begin{pmatrix}
    -p & 3\,p+\frac{p^{2}+1}{q} \\
    -q & 3\,q+p
  \end{pmatrix}
  \;\in\;\SLTZ\,.
\end{equation}

\begin{remark}
  In particular, this implies $\frac{p^{2}+1}{q}\in\Z$, which provides
  a geometric interpretation of the number theoretic
  Corollary~\ref{cor:quadratic_residue}. More directly, such a
  geometric interpretation can be obtained by considering the
  orientation preserving hyperbolic isometry $[S_{p,q}]\in\PSLTR$ that
  interchanges the horocycles $h(p,q)$ and $h(1,0)$.
  This determines
  \begin{equation*}
    S_{p,q}\;=\;
    \begin{pmatrix}
     -p & \frac{p^{2}+1}{q} \\
     -q & p     
   \end{pmatrix}
   \;\in\;\SLTR
  \end{equation*}
  up to sign. Since $[S_{p,q}]$ projects to the hyperelliptic
  involution on $M$, we have $S_{p,q}\in\SLTZ$. (The map $[T]$ is the
  composition of $[S_{p,q}]$ followed by $z\mapsto z+3$.)
\end{remark}

The axis $\varrho_{x}$ of the hyperbolic translation $[T]$ connects
the attracting and repelling fixed points of $[T]$. Let us call them
\begin{equation}
  \label{eq:Tk_lim}
  \xi_{+}=\lim_{k\rightarrow+\infty}[T]^{k}\cdot\sminfty
  \quad\text{and}\quad
  \xi_{-}=\lim_{k\rightarrow-\infty}[T]^{k}\cdot\sminfty,
\end{equation}
respectively, where convergence is in the topology of $\C P^{1}$. (We
might equally well let $[T]^{k}$ act on any other point except the
fixed points, but we choose $\sminfty$ so we can re-use
equation~\eqref{eq:Tk_lim} in the proof of Lemma~\ref{lem:Ipq}.) By a
straightforward calculation we obtain
\begin{equation}
  \label{eq:T_fp}
  \xi_{\pm}\;=\;
  x\,+\,\frac{3}{2}
  \,\mp\,\sqrt{\frac{9}{4}\,-\,\frac{1}{q^{2}}}\;.
\end{equation}

\begin{remark}
  \label{rem:len_closed_geodesic}
  Since $\trace(T)=3q$, the eigenvalues of $T$ are
  $\frac{1}{2}\,(3\,q\pm\sqrt{9\,q^{2}-4})$. This also shows the
  well-known formulas for the lengths of closed geodesics in the
  modular torus,
  \begin{equation*}
    2\,\cosh
    \left(
      \frac{\length(\eta)}{2}
    \right)
    \;=\;3\,q\,,
    \qquad
    2\,\exp
    \left(
      \frac{\length(\eta)}{2}
    \right)
    \;=\;
      3\,q\;+\;\sqrt{9\,q^{2}\,-\,4}\,,
  \end{equation*}
  but we will not need them.
\end{remark}

Now the equivalence of the statements (i$^+$) and (ii$^+$) follows
from the fact that
\begin{equation}
  \label{eq:companion_geometric}
  [T]^{k}\cdot \sminfty\;=\;\gamma_{k}^{+}(x)
\end{equation}
(with $\gamma_{k}^{+}(x)$ defined by equation~\eqref{eq:companions}).
To see this, let $(r_{k})_{k\in \Z}$ and $(s_{k})_{k\in \Z}$ be
defined by
\begin{equation}
  \label{eq:rs}
  \begin{pmatrix}
    r_{k} \\ s_{k}
  \end{pmatrix}
  \;=\;
  T^{k}\;
  \begin{pmatrix}
    -1 \\ 0
  \end{pmatrix}\,,
\end{equation}
so that
\begin{equation*}
  [T]^{k}\cdot \sminfty\;=\;\frac{r_{k}}{s_{k}}\;.
\end{equation*}
Then it suffices to show that
\begin{equation}
  \label{eq:rs_of_u}
  \begin{pmatrix}
    r_{k}\\s_{k}
  \end{pmatrix}
  \;=\;
  \begin{pmatrix}
    p\,u_{k}\,+\,u_{k-1}\\q\,u_{k}
  \end{pmatrix}
\end{equation}
for the sequence $(u_{k})_{k\in \Z}$ defined by~\eqref{eq:un}.

First note that $q$ divides $s_{k}$ for all $k\in\Z$. (This is easy to
see by induction in both directions starting from $s_{0}=0$.) Hence we
can \emph{define} integer sequences $(t_{k})_{k\in\Z}$ and
$(u_{k})_{k\in\Z}$ by
\begin{equation*}
  u_{k}\;=\;\frac{s_{k}}{q} \qquad\text{and}\qquad
  t_{k}\;=\;r_{k}-p\,u_{k}.
\end{equation*}
We have to show that $(u_{k})_{k\in\Z}$ solves the
recursion~\eqref{eq:un}. The initial conditions are easily
verified. Using the relation
\begin{equation*}
  \begin{pmatrix}
    r_{k}\\s_{k}
  \end{pmatrix}
  \;=\;
  \underbrace{
    \begin{pmatrix}
      1 & p\\
      0 & q
    \end{pmatrix}
  }_{=:\,U}\;
  \begin{pmatrix}
    t_{k}\\u_{k}
  \end{pmatrix},
\end{equation*}
and
$\big(
\begin{smallmatrix}
  r_{k+1}\\s_{k+1}
\end{smallmatrix}
\big)
=T\,
\big(
\begin{smallmatrix}
  r_{k}\\s_{k}
\end{smallmatrix}
\big)$
we obtain the first order recursion formula
\begin{equation*}
  \begin{pmatrix}
    t_{k+1}\\u_{k+1}
  \end{pmatrix}
  \;=\;
  \underbrace{
    \begin{pmatrix}
      0  & 1 \\
      -1 & 3q
    \end{pmatrix}
  }_{U^{-1}\,T\,U}\;
  \begin{pmatrix}
    t_{k}\\u_{k}
  \end{pmatrix}
  .
\end{equation*}
Eliminate $(t_{k})$ using $t_{k}=u_{k-1}$ to obtain the
second-order recursion~\eqref{eq:un} for $(u_{k})_{k\in\Z}$ and
equation~\eqref{eq:rs_of_u} for $r_{k}$ and $s_{k}$.  This completes
the proof of equation~\eqref{eq:companion_geometric}, showing that
(i$^{+}$) and (ii$^{+}$) are equivalent.

Finally, to see that the geodesic $\pi(g_{\gamma^{+}_{k}(x)})$ in the
cylinder $\pi(R_{x})\subset M$ has $k-1$ self-intersections, note that
the vertical geodesic $g_{\gamma^{+}_{k}(x)}$ in the universal cover
$R_{x}$ intersects the $2\,(k-1)$ geodesics
\begin{equation}
  [T]^{\ell}\cdot g_{\gamma^{+}_{k}(x)}
  \quad\text{for}\quad
  \ell\;\in\;\{1,\ldots,k-1\}\;\cup\;\{-(k-1),\ldots,-1\}\,.
\end{equation}
but the intersection points
\begin{equation}
  \zeta_{\ell}\;=\;g_{\gamma^{+}_{k}(x)}
  \;\cap\;
  \left(
    [T]^{\ell}\cdot g_{\gamma^{+}_{k}(x)}
  \right)
\end{equation}
project in pairs to the same point in $M$,
\begin{equation}
  \pi(\zeta_{\ell})\;=\;\pi(\zeta_{-\ell}),
\end{equation}
because
\begin{equation}
  \zeta_{\ell}\;=\;[T]^{\ell}\cdot\zeta_{-\ell}\,.
\end{equation}
This completes the proof of Lemma~\ref{lem:companions_geom}.\qed

\bigskip%
Because $T\in\SLTZ$, equation~\eqref{eq:rs} implies that $r_{k}$ and
$s_{k}$ are coprime for all $k\in\Z$. So the above proof (together
with the analogous argument for the left companions) yields the
following corollary:

\begin{corollary}
  \label{cor:rs_coprime}
  For all $k\in\Z$, the integers $p\,u_{k}+ u_{k-1}$ and $q\,u_{k}$
  are coprime, and so are the integers $p\,u_{k}-u_{k-1}$ and
  $q\,u_{k}$.
\end{corollary}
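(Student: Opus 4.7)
The plan is to derive the coprimality directly from the integrality of the matrix $T$ constructed in the proof of Lemma~\ref{lem:companions_geom}. That proof established the identity
\[
\begin{pmatrix} p\,u_k + u_{k-1} \\ q\,u_k \end{pmatrix}
\;=\; T^k \begin{pmatrix} -1 \\ 0 \end{pmatrix}
\]
with $T \in \SLTZ$, and hence $T^k \in \SLTZ$ for every $k \in \Z$. The key observation is that every matrix $M \in \SLTZ$ has coprime columns, because any common divisor of a column would divide $\det M = \pm 1$. Applied to the first column of $T^k$ (up to a global sign), this immediately yields the coprimality of $p\,u_k + u_{k-1}$ and $q\,u_k$, which is the right-companion half of the corollary.

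For the left-companion half, I would repeat the construction of the proof of Lemma~\ref{lem:companions_geom} with $R_x$ replaced by $L_x$: the corresponding deck-transformation generator $T' \in G \subset \PSLTZ$ has iterates $[T']^k \cdot \sminfty = \gamma^-_k(p/q)$, and the analogous matrix identity will exhibit $(p\,u_k - u_{k-1},\ q\,u_k)$ as the first column of $(T')^k$ up to sign; the same $\SLTZ$-determinant argument then gives coprimality. A slightly slicker alternative is to invoke the symmetry $\gamma^+_{-k}(p/q) = \gamma^-_k(p/q + 3)$ from Remark~\ref{rem:companions_symmetries} together with $u_{-k} = -u_k$, applying the right-companion case already proved to the Markov fraction $(p+3q)/q$ at index $-k$ and reading off the left-companion statement.

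There is essentially no obstacle of substance: once equation~\eqref{eq:rs_of_u} is in hand, as it now is, the corollary reduces to the one-line remark that columns of an $\SLTZ$-matrix are coprime. The only care required is sign bookkeeping when matching the lattice vector $T^k \cdot (-1,0)^\top$ against the pair $(p\,u_k + u_{k-1},\ q\,u_k)$, but this does not affect coprimality. For this reason, I expect the proof in the paper to be just a one- or two-sentence reference back to the matrix identity.
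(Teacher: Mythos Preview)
Your proposal is correct and matches the paper's approach essentially verbatim: the paper derives the corollary in the single sentence ``Because $T\in\SLTZ$, equation~\eqref{eq:rs} implies that $r_{k}$ and $s_{k}$ are coprime for all $k\in\Z$,'' and then invokes the analogous argument for the left companions, exactly as you anticipated.
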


\subsection{Proof of Theorem~\ref{thm:Markov_best_approximants} (ii)
  (Best approximants of Markov fractions)}
\label{sec:pr_Markov_approximants}

Part (i) of Theorem~\ref{thm:Markov_best_approximants} was proved
without recourse to geometry in
Section~\ref{sec:pf_thm_unique_centered_triple}. The purpose of this
section is to prove part (ii). The argument begins algebraically, but
geometry will come into play shortly.

Let
\begin{equation*}
  m\;=\;\left(\,\frac{p_{1}}{q_{1}},\;\frac{p_{2}}{q_{2}},\;\frac{p_{3}}{q_{3}}\,\right)
\end{equation*}
be a centered rational Markov triple. Then we have
\begin{equation*}
  q_{1}^{2}\,\cdot\,
  \left(\,
    \frac{p_{2}}{q_{2}}\;-\;\frac{p_{1}}{q_{1}}\,
  \right)
  \;\;\;\overset{\makebox[0pt]{\scriptsize{\eqref{eq:p1p2}}}}{=}\;\;\;
  \frac{q_{1}\,q_{3}}{q_{2}}
  \;\;\;\overset{\makebox[0pt]{\scriptsize{\eqref{eq:p2p3}}}}{=}\;\;\;
  q_{3}^{2}\,\cdot\,
  \left(\,
    \frac{p_{3}}{q_{3}}\;-\;\frac{p_{2}}{q_{2}}\,.
  \right)\,,
\end{equation*}
Moreover,
\begin{equation*}
  \frac{1}{3}\;<\;\frac{q_{1}\,q_{3}}{q_{2}}\;\leq\;1
\end{equation*}
from equation~\eqref{eq:Markov_q}, together with~\eqref{eq:centered}
for the upper bound. It remains to show that for every rational number
$\frac{a}{b}\not\in\big\{\frac{p_{1}}{q_{1}},\frac{p_{2}}{q_{2}},\frac{p_{3}}{q_{3}}\big\}$,
\begin{equation}
  \label{eq:check_ab}
  \frac{q_{1}\,q_{3}}{q_{2}}
  \;<\;
  b^{2}\cdot\,\left|\,\frac{p_{2}}{q_{2}}\;-\;\frac{a}{b}\,\right|\,.
\end{equation}
To see this, we will rely on the geometric characterization of Markov
fractions (see Sections~\ref{sec:companions_geom}
and~\ref{sec:pf_companions_geom}) to narrow down the set of potential
best approximants so we can check~\eqref{eq:check_ab} for the
remaining candidates.

So let $x=\frac{p}{q}$ be a Markov fraction, and let
$x^{*}=\frac{p^{*}}{q^{*}}$ be a best approximant, i.e., suppose
\begin{equation}
  C\left(\frac{p}{q}\right)
  \;=\;(q^{*})^{2}
  \,\cdot\,
  \left|\;\frac{p}{q}\;-\;\frac{p^{*}}{q^{*}}\;\right|\,.
\end{equation}
By Lemma~\ref{lem:d_vert_horo}, this means that the Ford circle
$h(p^{*},q^{*})$ has the smallest signed distance to the vertical
geodesic~$g_{x}$ among all Ford circles except $h(p,q)$ and
$h(1,0)$. This leads to the following necessary geometric condition
for best approximants:

\begin{lemma}
  \label{lem:nearest_to_Markov}
  If~$x$ is Markov fraction and~$x^{*}$ is a best approximant of~$x$,
  then no geodesic in the $G$-orbit of $g_{x}$ separates~$g_{x}$
  and~$x^{*}$.
\end{lemma}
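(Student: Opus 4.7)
My plan is to argue by contradiction. Suppose some $g'=\gamma\cdot g_x$ with $\gamma\in G$ and $g'\ne g_x$ separates $g_x$ from $x^*$. I will exhibit a Ford circle strictly closer to $g_x$ than $h(p^*,q^*)$, contradicting the best-approximant property.

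First I would pull the separation back by $\gamma^{-1}$: since $\gamma^{-1}$ is an isometry, $g_x$ separates $\gamma^{-1}g_x$ from the ideal point $x^{**}:=\gamma^{-1}(x^*)$. Strictness of the separation forces $x^{**}\notin\{x,\sminfty\}$, the endpoints of $g_x$, so writing $x^{**}=p^{**}/q^{**}$ with $q^{**}>0$, the Ford circle $h^{**}:=h(p^{**},q^{**})=\gamma^{-1}h(p^*,q^*)$ is an admissible competitor in the infimum defining $C(x)$. Isometry-invariance gives $d(\gamma^{-1}g_x,h^{**})=d(g_x,h(p^*,q^*))$, so it suffices to establish the strict comparison $d(g_x,h^{**})<d(\gamma^{-1}g_x,h^{**})$, which via Lemma~\ref{lem:d_vert_horo} would contradict minimality of $h(p^*,q^*)$.

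To prove this comparison I would normalize by a hyperbolic isometry sending $x^{**}$ to $\sminfty$, chosen so that neither geodesic becomes vertical. In the new coordinates $h^{**}$ becomes a horizontal horocycle $\{\im z=c\}$, while $g_x$ and $\gamma^{-1}g_x$ become disjoint Euclidean semicircles (disjoint because $\pi(g_x)$ is simple, by Corollary~\ref{cor:Markov_frac_geom}). The separation hypothesis then translates directly into the statement that the semicircle representing $\gamma^{-1}g_x$ lies strictly inside the Euclidean half-disk bounded by the semicircle representing $g_x$; in particular its Euclidean radius is strictly smaller. An elementary computation shows that the signed distance from a Euclidean semicircle of radius $r$ to the horizontal horocycle $\{\im z=c\}$ equals $\log(c/r)$, valid whether the semicircle and the horocycle meet or not, and this quantity is strictly decreasing in $r$. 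Hence $d(g_x,h^{**})<d(\gamma^{-1}g_x,h^{**})$, as required.

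The main obstacle is the careful treatment of the degenerate and negative-distance situations. Specifically, strict separation on the ideal boundary must be shown to force a strict inequality of the Euclidean radii (not merely containment), and the formula $\log(c/r)$ must be confirmed to give the correct \emph{signed} distance even when the horocycle crosses the semicircle, so that the argument does not break down for large Ford circles $h^{**}$ whose signed distance to $g_x$ happens to be negative.
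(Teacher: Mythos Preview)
Your argument is correct and follows the same strategy as the paper: show that if $\gamma\cdot g_x$ separates $g_x$ from $x^*$, then $\gamma^{-1}(x^*)$ is a strictly better approximant of $x$, via the inequality $d(g_x,\gamma^{-1}h(p^*,q^*))<d(g_x,h(p^*,q^*))$. The paper's execution is more direct: rather than pulling back by $\gamma^{-1}$ and then normalizing $x^{**}$ to $\sminfty$ to compare Euclidean radii, it simply observes that the shortest geodesic ray from $x^*$ to $g_x$ must cross the separating geodesic $\gamma\cdot g_x$ first, which immediately yields $d(g_x,h(p^*,q^*))>d(\gamma\cdot g_x,h(p^*,q^*))=d(g_x,\gamma^{-1}h(p^*,q^*))$; this one-line geometric argument sidesteps the degenerate and negative-distance cases you flag as obstacles.
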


(A geodesic $g$ in the hyperbolic plane $H^{2}$ \emph{separates} a
subset $A\in H^{2}$ and an ideal point $p\in\partial\!H^{2}$ if $A$
and $p$ are contained in different connected components of
$(H^{2}\cup\partial\!H^{2})\setminus(g\cup\partial\!g)$, where
$\partial\!g$ is the set of the two ideal endpoints of $g$.)

\begin{proof}[Proof of Lemma~\ref{lem:nearest_to_Markov}]
  Suppose $\tau\cdot g_{x}$ separates $g_{x}$ and $x'$ for some
  $x'=\frac{p'}{q'}\in\Q$ and $\tau\in G$. We have to show that $x'$
  is not a best approximant of $x$.

  Since the shortest geodesic ray from $x'$ to $g_{x}$ intersects
  $\tau(g_{x})$ before reaching~$g_{x}$,
  \begin{equation*}
    d\big(g_{x},\,h(\,p',q'\,)\big)
    \;>\;d\big(\tau(g_{x}), \,h(\,p',q'\,)\big)
    \;=\;d\big(g_{x},\,\tau^{-1}(\,h(\,p',q'\,)\,)\big).
  \end{equation*}
  Since the Ford circle $\tau^{-1}(h(p',q'))$ has a smaller signed
  distance to $g_{x}$ than $h(p',q')$, the number $\tau^{-1}\cdot x'$
  approximates $x$ better than $x'$.
\end{proof}

Lemma~\ref{lem:nearest_to_Markov} implies that $x^{*}$ is a rational
ideal boundary point of one of the domains
\begin{equation*}
  R_{x}\quad\text{or}\quad L'_{x}
\end{equation*}
that are bounded by the geodesic $\varrho_{x}$ and geodesics in the
$G$-orbit of $g_{x}$ (see Figure~\ref{fig:arc_orbit})
\begin{figure}
  \centering
  \input{arc_orbit_t}
  \caption{The domains $R_{x}$ and $L'_{x}$ bounded by $\varrho_{x}$
    and geodesics in the $G$-orbit of $g_{x}$}
  \label{fig:arc_orbit}
\end{figure}
or one of the domains
\begin{equation*}
L_{x}=R_{x}-3 \quad\text{and}\quad R'_{x}=L'_{x}-3
\end{equation*}
bounded by $\lambda_{x}=\varrho_{x}-3$ and geodesics in the $G$-orbit
of $g_{x}$.

In other words, $L'_{x}$ is the lift of the cylinder $\pi(L_{x})$
adjacent to $\varrho_{x}$, and $R'_{x}$ is the lift of the cylinder
$\pi(R_{x})$ adjacent to the geodesic $\lambda_{x}$. (See also
Figure~\ref{fig:companions_geometric}, which shows $\lambda_{x}$ and
$R'_{x}$. But keep in mind that Figure~\ref{fig:companions_geometric}
does not represent the geometry of the modular torus $M$ correctly
whereas Figure~\ref{fig:arc_orbit} does, as explained in
Remark~\ref{rem:fake_geometry}.)

We will now consider all these rational ideal boundary points that are
candidates for best approximants in turn. First, let us consider the
rational ideal boundary points of $R_{x}$, which are:

\medskip%
\begin{compactitem}
\item the Markov fraction $x=\frac{p}{q}$ itself, which does not
  qualify,
\item the right companions $\gamma^{+}_{k}(x)$, with $k\geq 2$,
\item the left companions $\gamma^{-}_{k}(x+3)$, also with $k\geq 2$, and
\item the Markov fraction $x+3$.
\end{compactitem}

\medskip%
The left companions of $x+3$ are ruled out as best approximants
because $\gamma^{-}_{k}(x+3)$ has the same denominator as
$\gamma^{+}_{k}(x)$ but is further away from~$x$.

For a right companion
\begin{equation*}
  \frac{a}{b}\;=\;\gamma^{+}_{k}\,
  \Big(\,\frac{p}{q}\,\Big)\,,
\end{equation*}
we get using Lemmas~\ref{lem:companion_reduced} and~\ref{lem:u_prop}:
\begin{equation*}
  \begin{split}
    b^{2}\cdot\,\left|\,\frac{p_{2}}{q_{2}}\;-\;\frac{a}{b}\,\right|
    \;\;&=\;\;
    q^{2}\,u_{k}^{2}\;
    \left|\;
      \frac{p}{q}\;-\;
      \gamma^{+}_{k}\,
      \Big(\,\frac{p}{q}\,\Big)\;
    \right|\\
    &\overset{\makebox[0pt]{\scriptsize\eqref{eq:companions}}}{=}\;\;
    q\,u_{k}\,u_{k-1}\\
    &\overset{\makebox[0pt]{\scriptsize\eqref{eq:u_conserved}}}{=}\;\;
      \tfrac{1}{3}\,(\,u_{k}^{2}\,+\,u_{k-1}\,-\,1\,)\\
    &\geq\;\;\tfrac{1}{3}\,(\,u_{2}^{2}\,+\,u_{1}\,-\,1\,)\\
    &\overset{\makebox[0pt]{\scriptsize\eqref{eq:un}}}{=}\;\;
    3\,q^{2}\;\geq\; 3
  \end{split}
\end{equation*}
This means that a right companion $\gamma^{+}_{k}(\frac{p}{q})$
approximates the Markov fraction $\frac{p}{q}$ worse than the Markov
fractions $\frac{p_{1}}{q_{1}}$ and~$\frac{p_{3}}{q_{3}}$ of the
centered rational Markov triple
$\big(\frac{p_{1}}{q_{1}},\frac{p_{2}}{q_{2}},\frac{p_{3}}{q_{3}}\big)$
with $\frac{p_{2}}{q_{2}}=\frac{p}{q}$.

One ideal boundary point of $R_{x}$ is left to check: The Markov
fraction $x+3$, for which we get $1^{2}\big|x-(x+3)\big|=3$.

Thus we conclude that the ideal boundary points of $R_{x}$ are not best
approximants of $x$. Similarly, the ideal boundary points of $L_{x}$
are not best approximants of $x$.

\begin{remark}
  Note that the first companion $\gamma^{+}_{2}(x)$ and the Markov
  fraction $x+3$ approximate the Markov fraction $x$ exactly equally
  badly. In general, the $(k-1)$st right companion $\gamma^{+}_{k}(x)$
  and the $(k-2)$nd left companion $\gamma_{k-1}(x+3)$ approximate~$x$
  with the same approximation quality. This due to a geometric
  symmetry.
\end{remark}

Now let $y$ be a rational ideal boundary point of $L'_{x}$. Then the
vertical geodesic~$g_{y}$ projects to a simple geodesic $\pi(g_{y})$
in $M$ with both ends in the cusp. Furthermore, $\pi(g_{y})$ does not
intersect of the geodesic $\pi(g_{x})$. (To see this, note that
$g_{y}$ does not intersect any of the geodesics $[T]^{k}\cdot g_{y}$
for $k\in\Z\setminus\{0\}$, nor any geodesics in the $G$-orbit of
$g_{x}$.)

This implies, first, that $y$ is also Markov fraction. Moreover, since
$\pi(g_{x})$ and $\pi(g_{y})$ do not intersect, there are exactly two
ideal triangulations of $M$ containing $\pi(g_{x})$ and $\pi(g_{y})$
as edges. Since $x<y<x+3$, we can deduce that there is a unique
rational Markov triple
\begin{equation*}
  m'\;=\;
  \left(\frac{p_{1}'}{q_{1}'},\frac{p_{2}}{q_{2}},\frac{p_{3}'}{q_{3}'}\right)
  \qquad\text{with}\qquad
  x\;=\;\frac{p_{2}}{q_{2}},
  \qquad
  y\;=\;\frac{p_{3}'}{q_{3}'}\,.
\end{equation*}
Conversely, if $m'$ is a rational Markov triple with
$x=\frac{p_{2}}{q_{2}}$, then $\frac{p_{3}'}{q_{3}'}$ is an ideal
boundary point of $L_{x}'$.

For the ideal boundary point $y$ of $L_{x}'$, we get the approximation quality
\begin{equation*}
  (q_{3}')^{2}\cdot\,\left|\,\frac{p_{2}}{q_{2}}\;-\;\frac{p_{3}'}{q_{3}'}\,\right|
  \;=\;
  \frac{q_{3}'\,q_{1}'}{q_{2}}\,,
\end{equation*}
and if $m'$ is not the unique centered rational Markov triple $m$, then
\begin{equation*}
  \frac{q_{3}'\,q_{1}'}{q_{2}}
  \;>\;
  \frac{q_{3}\,q_{1}}{q_{2}}\,.
\end{equation*}
This shows that among all ideal boundary points of $L_{x}'$, the third
element $\frac{p_{3}}{q_{3}}$ of the centered rational Markov triple
$m$ with $x=\frac{p_{2}}{q_{2}}$ approximates $x$ best, and all other
ideal boundary points of $L_{x}'$ are worse.

In the same way, one sees that among all ideal boundary points of $R_{x}'$, the first
element $\frac{p_{1}}{q_{1}}$ of the centered rational Markov triple
$m$ with $x=\frac{p_{2}}{q_{2}}$ approximates~$x$ best, and all other
ideal boundary points of $R_{x}'$ are worse.

This completes the proof of
Theorem~\ref{thm:Markov_best_approximants}~(ii).  \qed

\begin{remark}
  While many constructions and arguments in this paper generalize to
  arbitrary hyperbolic tori with one cusp,
  Theorem~\ref{thm:Markov_best_approximants} (ii) really depends on
  the geometry of the modular torus $M$. For example, in the torus
  shown in Figure~\ref{fig:companions_geometric}, the horocycles at
  $\gamma^{\pm}_{2}(x)$ are closer to $g_{x}$ than the horocycles at
  the ideal boundary points of $L_{x}'$ and $R_{x}'$.
\end{remark}

\subsection{Proof of Theorem~\ref{thm:companion_approximation} (Best
  approximants of companions)}
\label{sec:pf_companion_approximants}

From the definition~\eqref{eq:companions}, we get immediately
\begin{equation*}
  q^{2}\,
  \left|\,
    \gamma^{\pm}_{k}\Big(\frac{p}{q}\Big)\,-\,\frac{p}{q}\,
  \right|
  \;=\;
  q\,\frac{u_{k-1}}{u_{k}}.
\end{equation*}
To see that also
\begin{equation*}
  (q\,u_{k-1})^{2}\,
  \left|\,
    \gamma^{\pm}_{k}
    \Big(\frac{p}{q}\Big)
    \,-\,
    \gamma^{\pm}_{k-1}
    \Big(\frac{p}{q}\Big)\,
  \right|  
  \;=\;
  q\,\frac{u_{k-1}}{u_{k}}\,,
\end{equation*}
note that
\begin{equation}
  \label{eq:companion_gap}
  \left|\,
    \gamma^{\pm}_{k}
    \Big(\frac{p}{q}\Big)
    \,-\,
    \gamma^{\pm}_{k-1}
    \Big(\frac{p}{q}\Big)\,
  \right|
  \;=\;
  \frac{1}{q\,u_{k}\,u_{k-1}}
\end{equation}
because
\begin{equation*}
  \frac{u_{k-1}}{q\,u_{k}}
  \,-\,
  \frac{u_{k-2}}{q\,u_{k-1}}
  \;=\;
  \frac{1}{q\,u_{k}\,u_{k-1}}\;
  \big(\,u_{k-1}^{2}\,-\,u_{k}\,u_{k-2}\,\big)
\end{equation*}
and
\begin{equation*}
  \begin{split}
    u_{k-1}^{2}\,-\,u_{k}\,u_{k-2} \;\;
    &\overset{\makebox[0pt]{\scriptsize\eqref{eq:un}}}{=}
    \;\;
    u_{k-1}^{2}\,-\,(3\,q\,u_{k-1}\,-\,u_{k-2})\,u_{k-2}\\
    &=
    \;\;
    u_{k-1}^{2}\,-\,3\,q\,u_{k-1}\,u_{k-2}\,+\,u_{k-2}^{2}\\
    &\overset{\makebox[0pt]{\scriptsize\eqref{eq:u_conserved}}}{=}
    \;\;
    1\,.
  \end{split}
\end{equation*}
This shows the right equality in~\eqref{eq:companion_equal_quality}.

\begin{remark}
  Alternatively, we could avoid these calculations here and infer from
  a geometric symmetry that $\frac{p}{q}$ and
  $\gamma^{\pm}_{k-1}\big(\frac{p}{q}\big)$ both approximate
  $\gamma^{\pm}_{k}\big(\frac{p}{q}\big)$ with the same quality. But
  we will need equation~\eqref{eq:companion_gap} anyway for the proof
  of the classification Theorem~\ref{thm:classify} (see
  Section~\ref{sec:pf_classify}).
\end{remark}

To see the inequality in~\eqref{eq:Cgamma}, note that
\begin{equation*}
  q\,u_{k-1}
  \;\;
  \overset{\makebox[0pt]{\scriptsize\eqref{eq:un}}}{=}
  \;\;
  \frac{1}{3}\,(\,u_{k}\,+\,u_{k-2}\,)\,.
\end{equation*}
So for $k\geq 2$,
\begin{equation*}
  q\,\frac{u_{k-1}}{u_{k}}
  \;=\;
  \frac{1}{3}
  \left(
    1\;+\;\frac{u_{k-2}}{u_{k}}
  \right)
  \;\geq\;
  \frac{1}{3}\;,
\end{equation*}
with equality only for $k=2$.

It remains to show that
\begin{equation}
  \label{eq:companion_candidate_test}
  q\,\frac{u_{k-1}}{u_{k}}
  \;<\;
  b^{2}\,
  \left|\,
    \gamma^{\pm}_{k}\Big(\frac{p}{q}\Big)\,-\,\frac{a}{b}\,
  \right|
\end{equation}
for every rational number $\frac{a}{b}$, except $\frac{p}{q}$,
$\gamma^{\pm}_{k-1}\big(\frac{p}{q}\big)$, and
$\gamma^{\pm}_{k}\big(\frac{p}{q}\big)$. The general strategy is the
same as in the previous section. We will use
Lemma~\ref{lem:nearest_to_companion} to eliminate all but a manageable
subset of candidates $\frac{a}{b}$ that we have to check. However,
instead of $G$-orbits as in Lemma~\ref{lem:nearest_to_Markov}, we will
consider orbits with respect to a larger group~$\widehat{G}$, of which
$G$ is a subgroup of index $2$:

\begin{definition}
  Let $\widehat{G}\subset\PSLTZ$ be the group generated by the group
  $G$ of deck transformations of $M$ and any lift of the hyperelliptic
  involution, e.g., $z\mapsto z+3$.
\end{definition}

\begin{remark}
  In the previous section, it was sufficient to consider the
  $G$-orbits of $g_{x}$, because the hyperelliptic involution only
  reverses the orientation of simple geodesics in $M$ with both ends
  in the cups. Hence, the $G$-orbit and the $\widehat{G}$-orbit of
  $g_{x}$ are equal if we disregard the orientation of geodesics.
\end{remark}

\begin{lemma}
  \label{lem:nearest_to_companion}
  If~$y$ is a companion of a Markov fraction and~$y^{*}$ is a best
  approximant of~$y$, then no geodesic in the $\widehat{G}$-orbit of
  $g_{y}$ separates~$g_{y}$ and~$y^{*}$.
\end{lemma}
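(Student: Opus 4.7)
The plan is to mirror the proof of Lemma~\ref{lem:nearest_to_Markov} line by line, substituting $\widehat{G}$ for $G$. The crucial observation enabling this substitution is that $\widehat{G}\subset\PSLTZ$, so every element of $\widehat{G}$ still permutes the family of Ford circles. The distance-reducing argument used to rule out separators only needs this property of $\tau$ (not that $\tau$ is a deck transformation of $M$), and so it applies verbatim to the larger group.

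Concretely, I will suppose for contradiction that some $\tau\in\widehat{G}$ produces a geodesic $\tau\cdot g_{y}$ that separates $g_{y}$ from the candidate best approximant $y^{*}=a/b$. Because $y^{*}$ is an ideal point separated from $g_{y}$, it cannot lie on the ideal boundary of $\tau\cdot g_{y}$, so the two endpoints $\tau(y)$ and $\tau(\sminfty)$ are both distinct from $a/b$; in particular $\tau^{-1}(a/b)\neq y$. Applying $\tau^{-1}\in\PSLTZ$ to the Ford circle $h(a,b)$ then yields another Ford circle $h(a',b')$ centered at $a'/b'=\tau^{-1}(a/b)\neq y$. The same isometry argument used in Lemma~\ref{lem:nearest_to_Markov} gives the signed-distance inequality
\begin{equation*}
  d(g_{y},h(a,b))\;>\;d(\tau\cdot g_{y},h(a,b))\;=\;d(g_{y},\tau^{-1}(h(a,b)))\;=\;d(g_{y},h(a',b')),
\end{equation*}
which, via Lemma~\ref{lem:d_vert_horo}, translates into the statement that $a'/b'$ approximates $y$ strictly better than $a/b$, contradicting the hypothesis that $a/b$ is a best approximant.

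I do not expect a genuine obstacle; the proof is structurally identical to its predecessor, and the only point requiring a brief verification is that $\tau^{-1}(a/b)\neq y$, which is forced by the fact that a separating geodesic cannot have the separated point as one of its ideal endpoints. The reason for formulating the companion version with the larger group $\widehat{G}$ is content-bearing rather than technical: for a Markov fraction $x$, the $G$- and $\widehat{G}$-orbits of $g_{x}$ coincide as unoriented geodesics, so enlarging the group would yield nothing new, whereas for a companion $y$ the elliptic involution sends $\pi(g_{y})$ to a genuinely different geodesic in $M$, and the $\widehat{G}$-orbit of $g_{y}$ is strictly larger than its $G$-orbit. This stronger exclusion principle is what will be needed downstream to narrow the candidate set of best approximants of $y$ down to the two fractions $\frac{p}{q}$ and $\gamma^{\pm}_{k-1}\!\left(\frac{p}{q}\right)$.
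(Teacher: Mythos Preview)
Your proposal is correct and follows exactly the approach the paper intends: the paper itself says only that Lemma~\ref{lem:nearest_to_companion} ``can be proved in the same way as the analogous Lemma~\ref{lem:nearest_to_Markov},'' and your write-up carries this out faithfully, with the key observation that $\widehat{G}\subset\PSLTZ$ so that elements still permute Ford circles. If anything you are slightly more careful than the paper, since you explicitly note that $a/b$ cannot be an ideal endpoint of the separating geodesic $\tau\cdot g_{y}$, which guarantees that $\tau^{-1}(a/b)$ is a finite rational distinct from $y$ and hence an admissible competitor in the infimum defining $C(y)$.
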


Lemma~\ref{lem:nearest_to_companion} can be proved in the same way as
the analogous Lemma~\ref{lem:nearest_to_Markov} for Markov
fractions. Next, we will use it to prove a second geometric lemma:

\begin{lemma}
  \label{lem:nearest_to_companion2}
  If~$y$ is a companion of a Markov fraction $x$, and~$y^{*}$ is a best
  approximant of~$y$, then no geodesic in the $G$-orbit of
  $g_{x}$ separates~$g_{y}$ and~$y^{*}$.
\end{lemma}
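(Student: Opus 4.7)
The plan is to argue by contradiction, deducing Lemma~\ref{lem:nearest_to_companion2} from Lemma~\ref{lem:nearest_to_companion}. Suppose that some $\tau g_x$ with $\tau \in G$ separates $g_y$ from $y^*$; I will produce a geodesic in the $\widehat{G}$-orbit of $g_y$ that also separates $g_y$ from $y^*$, contradicting the previous lemma.

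Recall from Section~\ref{sec:pf_companions_geom} that $g_y \subset R_x$, where $R_x$ is a lift of the cylinder $\pi(R_x)$. The $G$-orbit of $g_x$ together with the closed-geodesic lifts of $\eta$ tessellates $H^2$ into $R$-type and $L$-type regions (lifts of the two cylinders $\pi(R_x)$ and $\pi(L_x)$). The geodesic $\tau g_x$ is an edge of this tessellation; let $\tilde Q$ be the cell adjacent to $\tau g_x$ on the $y^*$-side, so $\tilde Q$ is either a lift of $\pi(R_x)$ or of $\pi(L_x)$. Because the hyperelliptic involution swaps the two cylinders, $\tilde Q$ in either case contains a discrete sequence of lifts of $\pi(g_y)$ (if $\tilde Q$ is $R$-type) or of $\iota\pi(g_y)$ (if $\tilde Q$ is $L$-type), parametrized by the cyclic deck group of $\tilde Q$ over the corresponding cylinder and entirely analogous to the sequence $\{[T]^\ell g_y\}_{\ell \in \Z} \subset R_x$. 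Each such lift is a $\widehat{G}$-translate of $g_y$.

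I would then choose the lift $\sigma g_y \subset \tilde Q$ adjacent to $\tau g_x$ at the shared ideal vertex that lies on the $y^*$-side of $g_y$, and verify that $\sigma g_y$ separates $g_y$ from $y^*$. This $\sigma g_y$ belongs to the $\widehat{G}$-orbit of $g_y$, so the claimed separation contradicts Lemma~\ref{lem:nearest_to_companion} and proves the lemma.

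The main obstacle is precisely the last verification: one must confirm that $y^*$ is not trapped in the ideal angle of $\tilde Q$ at the other endpoint of $\sigma g_y$, i.e., that $y^*$ genuinely lies in the half-plane bounded by $\sigma g_y$ not containing $g_y$. This is a combinatorial check that follows by transporting the explicit description of the $[T]$-action on the ideal vertices of $R_x$ (used throughout Section~\ref{sec:pf_companions_geom}) to $\tilde Q$ via the element of $\widehat{G}$ sending $R_x$ to $\tilde Q$. It is the availability of the hyperelliptic involution as a lifting device that makes Lemma~\ref{lem:nearest_to_companion} strong enough to yield the present statement about the $G$-orbit of $g_x$.
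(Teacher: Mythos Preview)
Your approach is the same as the paper's: deduce the lemma from Lemma~\ref{lem:nearest_to_companion} by producing a $\widehat{G}$-translate of $g_{y}$ that also separates $g_{y}$ from $y^{*}$. The paper organizes the argument a bit differently. It first proves a local statement (Part~1): for every real $y'<x$ there is a $\widehat{G}$-translate of $g_{y}$ separating $g_{x}$ from $y'$. This is done with \emph{two} explicit translates, the vertical geodesic $g_{y-3}$ (sharing the endpoint $\infty$ with $g_{x}$) and $(z\mapsto z-3)\circ[T]^{-1}\cdot g_{y}$ (sharing the endpoint $x$ with $g_{x}$); their separating half-planes are the intervals $(-\infty,\,y-3)$ and $(\gamma^{+}_{k-1}(x)-3,\,x)$, which overlap and cover $(-\infty,x)$. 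Part~2 then transports this via an element $\hat{\sigma}\in\widehat{G}$ sending $g_{x}$ to the offending $\sigma\cdot g_{x}$ with the orientation chosen so that $\hat{\sigma}^{-1}\cdot y^{*}<x$.

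Your specification of a single translate ``adjacent to $\tau g_{x}$ at the shared ideal vertex that lies on the $y^{*}$-side of $g_{y}$'' is ambiguous, since both endpoints of $\tau g_{x}$ lie on that side ($\tau g_{x}$ is disjoint from $g_{y}$). The obstacle you correctly flag --- that $y^{*}$ might sit in the ideal angle at the \emph{other} endpoint --- is exactly why one translate does not suffice; the paper's two translates correspond to the two endpoints of $\tau g_{x}$, and the case split is according to which ideal angle of $\tilde Q$ the point $y^{*}$ lies beyond.
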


\begin{proof}
  Let us assume that $y$ is a right companion of $x$, i.e.,
  \begin{equation}
    \label{eq:y_gamma}
    y\;=\;\gamma^{+}_{k}(x),\quad k\geq 2\,.
  \end{equation}
  The argument for left companions is analogous. The proof has two
  parts.

  \medskip\noindent%
  Part 1.\;\ We will first prove the following statement:
  \begin{quotation}\noindent
    For all real numbers $y'<x$ there is a geodesic in the
    $\widehat{G}$-orbit of $g_{y}$ that separates $g_{x}$ and $y'$.
  \end{quotation}
  To this end, we consider two cases separately:
  \medskip%
  \begin{compactitem}
  \item $y'\in\,(-\sminfty,\, y-3)$.\quad Let $\tau(z)=z-3$. Then
    $\tau\in\widehat{G}$, and because $y-3<x$, the vertical geodesic
    $\tau\cdot g_{y}$ separates $g_{x}$ and $y'$.

    \medskip
  \item $y'\in\,(\gamma^{+}_{k-1}(x)\,-\,3,\, x)$.\quad With $[T]\in G$ as in
    Section~\ref{sec:pf_companions_geom} and $\tau$ as above we have
    \begin{equation*}
      [T]^{-1}\cdot y \;=\; \gamma^{+}_{k-1}(x)
      \qquad\text{and}\qquad
      [T]^{-1}\cdot \sminfty \;=\; x+3.
    \end{equation*}
    The geodesic $\tau\circ[T]^{-1}\cdot g_{y}$ therefore has ideal
    endpoints $\gamma^{+}_{k-1}(x)-3$ and $x$. So it separates $g_{y}$
    and $y'$.
  \end{compactitem}
  \medskip%
  Since $\gamma^{+}_{k-1}(x)<y$, the two intervals cover
  $(-\sminfty,\,x)$, and hence we have shown the statement for all $y'<x$.

  \medskip\noindent%
  Part 2.\; Now to prove the lemma, suppose there is a $\sigma\in G$ such
  that $\sigma\cdot g_{x}$ separates $g_{y}$ and $y'$. We have to show
  that $y'$ is not a best approximant of $y$.

  To see this, let $\hat{\sigma}\in\widehat{G}$ be the element with
  \begin{equation*}
    \hat{\sigma}\cdot g_{x}\;=\;\sigma\cdot g_{x}
    \quad\text{and}\quad
    y' \;\in\; \hat{\sigma}\cdot \R_{<x}\,.
  \end{equation*}
  (Either $\hat{\sigma}=\sigma$, or $\sigma^{-1}\hat{\sigma}$ is the
  lift of the hyperelliptic involution on $M$ that maps the simple geodesic
  $g_{x}$ with both ends in the cusp to itself, but reversing its
  orientation.) Then $\hat{\sigma}^{-1}\cdot y'<x$, so by (1) there is
  a geodesic $g_{1}\in\widehat{G}\cdot g_{y}$ that
  separates $g_{x}$ and $\hat{\sigma}^{-1}\cdot y'$. Hence
  $\hat{\sigma}\cdot g_{1}$ separates $\hat{\sigma}\cdot g_{x}$ and
  $y'$. Since $\hat{\sigma}\cdot g_{x}$ separates $g_{y}$ and $y'$ by
  assumption, the geodesic $\hat{\sigma}\cdot g_{1}$ separates $g_{y}$
  and $y'$. By Lemma~\ref{lem:nearest_to_companion}, $y'$ is not a
  best approximant of $y$.
\end{proof}

To prove the theorem, let us consider a right companion $y$ as
in~\eqref{eq:y_gamma}. The arguments for a left companion are
completely analogous.

First, Lemma~\ref{lem:nearest_to_companion2} implies that the best
approximants of $y$ are among the rational ideal boundary points of
the domains $R_{x}$ and $L_{x}'$ (see Figure~\ref{fig:arc_orbit}).

Let us first consider the rational ideal boundary points of $R_{x}$,
which are
\begin{equation*}
  \gamma^{+}_{\ell}(x)\;=\;[T]^{\ell}\cdot \sminfty
  \quad\text{for}\quad
  \ell\;\in\;\Z
\end{equation*}
(see Section~\ref{sec:pf_companions_geom}). Of these,
$y=\gamma^{+}_{k}(x)$ and $\sminfty=\gamma^{+}_{0}(x)$ are by
definition not best approximants of $y$.

Next, we will use Lemma~\ref{lem:nearest_to_companion} to show that
$\gamma^{+}_{\ell}(x)$ is not a best approximant if $k<\ell$ or
$\ell<0$. To see this, note first that the ideal endpoints of the
geodesic $[T]^{m}\cdot g_{y}$ are
\begin{equation*}
  \gamma^{+}_{m}(x)
  \quad\text{and}\quad
  \gamma^{+}_{m+k}(x).
\end{equation*}
Hence, if
\begin{equation*}
  k\;\leq\; m\;<\;\ell\;<\;m+k
\end{equation*}
or
\begin{equation*}
  m\;<\;\ell\;<\;m+k\;\leq\;0
\end{equation*}
then $[T]^{m}\cdot g_{y}$ separates $g_{y}$ and
$\gamma^{+}_{\ell}(x)$. Thus we have eliminated all rational ideal
boundary points of $R_{x}$ as best approximants except
\begin{equation}
  \label{eq:Rx_candiates}
  x\;=\;\gamma^{+}_{1}(x),\quad
  \gamma^{+}_{2}(x),\;
  \ldots\quad
  \gamma^{+}_{k-1}(x)\,.
\end{equation}

To eliminate all rational ideal boundary points of $L_{x}'$, let
$\iota\in\widehat{G}$ be the lift of the hyperelliptic involution on $M$
that maps the geodesic $\varrho_{x}$ to itself, only reversing its
orientation. Then the rational ideal boundary points of $L_{x}'$ are
the numbers
\begin{equation*}
  \iota\cdot\gamma^{+}_{\ell}(x)\;=\;\iota\circ [T]^{\ell}\cdot \sminfty
  \quad\text{for}\quad
  \ell\;\in\;\Z
\end{equation*}
Thus, if
\begin{equation*}
  m\;<\;\ell\;<\;m+k
\end{equation*}
then the geodesic $\iota\circ [T]^{m}\circ g_{y}$ separates $g_{y}$
and $\iota\circ\gamma^{+}_{\ell}(x)$. By
Lemma~\ref{lem:nearest_to_companion}, this excludes all rational ideal
boundary points of $L_{x}'$ as best companions of $y$.

Now the only candidates for best approximant of $y$ are the
numbers~\eqref{eq:Rx_candiates}, and we know that $x$ and
$\gamma^{+}_{k-1}(x)$ have the same approximation quality. To prove that
they are indeed the best approximants, it remains to
show~\eqref{eq:companion_candidate_test} for
\begin{equation*}
  \frac{a}{b}\;=\;\gamma^{+}_{\ell}(x)
  \quad\text{with}\quad
  1\;<\;\ell\;<\;k-1\,.
\end{equation*}
If $k=2$, there is nothing to show. For $k>2$, we will take the
geometric point of view to simplify the calculations.

We have to show that among the Ford circles at the the rational
numbers~\eqref{eq:Rx_candiates}, the minimal signed distance to the
vertical geodesic $g_{y}$ is attained precisely for the first and the
last (see Lemma~\ref{lem:d_vert_horo}). Note that the Ford circles at
$\gamma^{+}_{\ell}(x)$ with $\ell\in\Z$ are equally spaced translates
along the translation axis $\varrho_{x}$. In particular, all these
horocycles have the same signed distance to $\varrho_{x}$. Moreover,
$1\leq\ell\leq k-1$ if and only if $g_{y}$ separates $\varrho_{x}$ and
the center $\gamma^{+}_{\ell}(x)$.

To simplify the calculations, we apply a hyperbolic isometry that maps
the axis~$\varrho_{x}$ to the upward oriented vertical geodesic
$g_{0}$, and the geodesic $g_{y}$ to the geodesic with ideal endpoints
$e^{-t_{0}}$ and $e^{t_{0}}$ for some positive real $t_{0}$ (see
Figure~\ref{fig:optimal_companion}).
\begin{figure}
  \centering
  \begin{tikzpicture}

  \draw (-1,0) -- (9,0);
  \draw (0,0) node[below] {\small$0$} -- (0,6);

  \draw (2,0) node[below] {\small$e^{-t_{0}}$} arc (180:0:3) node[below] {\small$e^{t_{0}}$};

  \fill [black] (0,0) circle (1pt);
  \fill [black] (2,0) circle (1pt);
  \fill [black] (2.828,0) node[below] {\small$e^{t}$} circle (1pt);
  \fill [black] (4,0) node[below] {\small$1$} circle (1pt);
  \fill [black] (8,0) circle (1pt);

  \fill [black] (0,4) node[left] {\small$i$} circle (1pt);

  \draw[ultra thin] (2.818, 1.414) circle (1.414);
  \draw (1.8,3.1) node {\small$\displaystyle h\big(e^{\frac{t}{2}},e^{\frac{-t}{2}}\big)$};

  \draw[ultra thin]  (2.828,0) arc (0:110.7:.9866);
  \draw [thick] ($(1.8418,0)+({.9866*cos(71.5)},{.9866*sin(71.5)})$) arc (71.5:110.7:.9866);
  \draw (1.8, 1.2) node {$d$};
\end{tikzpicture}

  \caption{The signed distance $d$ between the geodesic with ideal
    endpoints~$e^{\pm t_{0}}$ and the horocycle
    $h(e^{\frac{t}{2}},e^{\frac{-t}{2}})$}
  \label{fig:optimal_companion}
\end{figure}
Then the images of the Ford circles at $\gamma^{+}_{\ell}(x)$ are
among the $1$-parameter family of translates of some horocycle
$h(a,a)$ along the axis $g_{0}$. To simplify the calculations further,
we will consider the translates of $h(1,1)$ instead. This scales the
horocycles evenly, so all distances change by the same additive
constant. That is, we consider the family of horocycles $h(p(t),q(t))$
with
\begin{equation*}
  \begin{pmatrix}
    p(t)\\q(t)
  \end{pmatrix}
  \;=\;
  \begin{pmatrix}
    e^{\frac{t}{2}} & 0 \\
    0 & e^{-\frac{t}{2}}
  \end{pmatrix}
  \,
  \begin{pmatrix}
    1 \\ 1
  \end{pmatrix}
  \;=\;
  \begin{pmatrix}
    e^{\frac{t}{2}} \\ e^{-\frac{t}{2}}
  \end{pmatrix}
\end{equation*}
(see Section~\ref{sec:overview_hyperbolic} and~\cite[p.~665]{Fock07},
\cite[Ch.~5]{Springborn_Markov}). Among these, the scaled images of
the Ford circles at at $\gamma^{+}_{\ell}(x)$ are the horocycles
$h(e^{\frac{t}{2}},e^{-\frac{t}{2}})$ with
\begin{equation}
  \label{eq:t_of_ell}
  t\;=\;\frac{t_{0}}{k}\;(\,2\,\ell\,-\,k\,)\,.
\end{equation}
(To see this, note that for integer values of $\ell$ between $0$ and
$k$, the $t$-values are evenly spaced reals between~$-t_{0}$
and~$t_{0}$.)

The signed distance between the horocycle
$h(e^{\frac{t}{2}},e^{-\frac{t}{2}})$ and the geodesic with ideal
endpoints~$e^{\pm t_{0}}$ is
\begin{equation*}
  d\;=\;\log\,
  \left|\,
    f\big(e^{\frac{t}{2}},\,e^{\frac{-t}{2}}\big)\,
\right|\,,
\end{equation*}
where $f$ is a quadratic form on $\R^{2}$ with determinant $-1$ that is zero for
$\frac{p}{q}=e^{\pm t_{0}}$ (see
\cite[Ch.~10]{Springborn_Markov}). This determines $f$ up to sign and
we may choose
\begin{equation*}
  f(p,q)\;=\;
  \frac{1}{\sinh t_{0}}
  \big(
  p^{2}\,-\,2\,(\cosh t_{0})\,p\,q\,+\,q^{2})
  \big)\,.
\end{equation*}
For $t\in (-t_{0},t_{0})$ we obtain
\begin{equation}
  \label{eq:d_companions}
  e^{d}\;=\;
  \frac{2}{\sinh t_{0}}\,
  \big(\cosh t_{0}\,-\,\cosh t\big)\,.
\end{equation}
This is a concave even function of $t$ with limit zero as
$t\rightarrow \pm t_{0}$. Hence, for $t$ as in~\eqref{eq:t_of_ell}
with $\ell\in\Z$ and $1\leq\ell\leq k-1$, the signed distance $d$
attains its minimum precisely for $\ell=1$ and $\ell=k-1$. This
completes the proof of Theorem~\ref{thm:companion_approximation}.

\subsection{Empty intervals around Markov fractions}
\label{sec:Ipq}

This section is about the largest intervals $I_{x}$ around each Markov
fraction~$x$ that contain no other Markov fractions (see
Section~\ref{sec:companions}). At the same time, $I_{x}$ is the
smallest closed interval that contains all companions of $x$. We will
prove Lemmas~\ref{lem:Ipq} and~\ref{lem:Ipq_cover}, which will be used
in the following section to complete the proof of the classification
Theorem~\ref{thm:classify}.

From the geometric point of view, Lemmas~\ref{lem:Ipq}
and~\ref{lem:Ipq_cover} are well known, and even in greater
generality: for arbitrary hyperbolic tori with one cusp. This is the
origin of McShane's remarkable identity for the lengths of simple
closed geodesics~\cite{mcshane91} (see also Remark~\ref{rem:mcshane}),
which has been reproved~\cite{bowditch96,goodman-strauss07,schmidt08}
and generalized~\cite{akiyoshi04,mcshane98,mcshane04} in various
ways.  Nevertheless, it seems worthwhile to provide independent proofs
in the current context.

\begin{lemma}
  \label{lem:Ipq}
  For every Markov fraction $x=\frac{p}{q}$, the interval $I_{x}$
  defined by equation~\eqref{eq:Ipq} is the smallest closed interval
  that contains all companions of $x$. It contains no Markov fractions
  other than $x$, nor any of their companions. But the irrational
  endpoints of~$I_{x}$ are accumulation points of the set of Markov
  fractions.
\end{lemma}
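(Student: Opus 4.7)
The plan is to leverage the geometric setup of Section~\ref{sec:pf_companions_geom}: the hyperbolic element $[T]\in G$ acts on $\partial H^{2}$ with fixed points $\xi_{+},\xi_{-}$ given by~\eqref{eq:T_fp}, so that $\xi_{+}=x+\delta_{q}$ is the endpoint of $\varrho_{x}$ closest to $x$. By~\eqref{eq:Tk_lim} the right companions $\gamma_{k}^{+}(x)=[T]^{k}\cdot\sminfty$ increase monotonically to $\xi_{+}$, and symmetrically the left companions decrease to $x-\delta_{q}$; this immediately shows that $I_{x}$ is the smallest closed interval containing every companion of $x$. Moreover $\xi_{\pm}$ are fixed points of a hyperbolic element of $G$, hence ideal endpoints of a lift of the simple closed geodesic $\eta$, and so Markov irrationalities; by Section~\ref{sec:path_limit} they are limits of Markov fractions arising along appropriate infinite paths in the forest, necessarily from outside $I_{x}$ by the emptiness statement proved next.

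For the emptiness claim, take $y\in I_{x}\setminus\{x\}$ and WLOG assume $y\in(x,\xi_{+})$. If $y=\gamma_{k}^{+}(x)$ for some $k\geq 2$ it is a companion of $x$ and nothing further is needed. Otherwise $y\in(\gamma_{k}^{+}(x),\gamma_{k+1}^{+}(x))$ for some $k\geq 1$, and the vertical geodesic $g_{y}$ crosses the single boundary edge $[T]^{k}\cdot g_{x}$ of the region $R_{x}$ once and then continues to $\sminfty$ inside $R_{x}$; hence $\pi(g_{y})$ meets $\pi(g_{x})$ in exactly one point and avoids every lift of $\eta$. A circular-ordering argument on $\partial H^{2}$ comparing the endpoints $\{y,\sminfty\}$ of $g_{y}$ with those $\{\gamma_{n}^{+}(x),[T]^{n}\cdot y\}$ of the translated lifts $[T]^{n}\cdot g_{y}$ then shows that $g_{y}$ crosses $[T]^{n}\cdot g_{y}$ precisely for $1\leq|n|\leq k$. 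These $2k$ crossings in $H^{2}$ project to at least $k\geq 1$ distinct self-intersections of $\pi(g_{y})$ in $M$, so by Corollary~\ref{cor:Markov_frac_geom} the point $y$ cannot be a Markov fraction, since that would force $\pi(g_{y})$ to be simple.

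The remaining case $y=\gamma_{\ell}^{\pm}(\tilde{x})$ with $\tilde{x}\neq x$ I would handle by first establishing that the intervals $I_{x'}$ attached to distinct Markov fractions are pairwise disjoint. This reduces to the algebraic inequality $|\tilde{x}-x|>\delta_{q}+\delta_{\tilde{q}}$, which when $\tilde{x}$ is adjacent to $x$ in the forest can be verified via $|\tilde{x}-x|=q_{3}/(q\,q_{2})$ from~\eqref{eq:p1p2_alt} together with the Markov equation and the closed form $q\delta_{q}=2/(3q+\sqrt{9q^{2}-4})$; the non-adjacent case then follows by descending along the forest to a common ancestor. Granted this disjointness, any companion of $\tilde{x}$ lies in $I_{\tilde{x}}$ and so cannot lie in $I_{x}$ unless $\tilde{x}=x$, completing the proof. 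The principal obstacle is exactly this disjointness inequality: the naive estimate $\delta_{q}+\delta_{\tilde{q}}<\tfrac{2}{3q^{2}}+\tfrac{2}{3\tilde{q}^{2}}$ is too coarse in some small-denominator pairs (e.g.\ $(q,\tilde{q})=(1,13)$), so one must retain the exact form of $\delta_{q}$ and combine it carefully with the Markov equation to close the argument.
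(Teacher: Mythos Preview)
Your treatment of the first two claims (smallest closed interval; no other Markov fractions) is along the right lines, though unnecessarily elaborate: to see that $\pi(g_{y})$ is non-simple for any $y\in(x,\xi_{+})$, the paper simply observes that $g_{y}$ meets $[T]\cdot g_{y}$ (because $[T]\cdot\sminfty=x<y<[T]\cdot y<\xi_{+}$, so the endpoint pairs are linked). There is no need to locate $y$ between consecutive companions or to count crossings with all $[T]^{n}\cdot g_{y}$.

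The real issue is your last step. You try to exclude companions of \emph{other} Markov fractions by proving the algebraic disjointness inequality $|\tilde{x}-x|>\delta_{q}+\delta_{\tilde{q}}$, which you yourself flag as the ``principal obstacle'' and do not actually close (the adjacent case is sketched, and the reduction of the non-adjacent case to adjacent ones by ``descending along the forest to a common ancestor'' is not justified). The paper bypasses this computation entirely by a short soft argument that you are missing: once you know that the only Markov fraction in $\mathring{I}_{x}$ is $x$, the same statement applied to $\tilde{x}$ says that no Markov fraction other than $\tilde{x}$ lies between $\tilde{x}$ and any of its companions. Now suppose some companion $\gamma$ of $\tilde{x}\neq x$ lay in $I_{x}$. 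Since $\tilde{x}\notin I_{x}$, the segment from $\tilde{x}$ to $\gamma$ crosses an endpoint of $I_{x}$; but Markov fractions accumulate at that endpoint from outside $I_{x}$ (this is where the accumulation claim is used), so some Markov fraction lies strictly between $\tilde{x}$ and $\gamma$, a contradiction. Thus the exclusion of foreign companions comes for free from the accumulation statement plus the emptiness already proved, with no inequality to verify.

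On that accumulation statement: your appeal to Section~\ref{sec:path_limit} is vague and not obviously sufficient---that section shows that certain infinite paths in the forest converge, not that $\xi_{\pm}$ specifically arise this way. The paper instead identifies the accumulating Markov fractions directly as the rational ideal boundary points of $L_{x}'$ (and $R_{x}'$), which manifestly converge to $\xi_{+}$ (resp.\ the left endpoint).
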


\begin{proof}[Proof of Lemma~\ref{lem:Ipq}]
  By equations~\eqref{eq:Tk_lim} and~\eqref{eq:companion_geometric},
  the right endpoint $\xi_{+}$ of $I_{x}$ is the limit of the
  increasing sequence $(\gamma^{+}_{k}(x))_{k\geq 2}$ of right
  companions of $x$. Analogously or by symmetry, the left endpoint is
  the limit of the decreasing sequence of left companions. Thus no
  smaller closed interval contains all companions.

  The rational ideal boundary points of the domain $L_{x}'$ are Markov
  numbers, and they accumulate at the right endpoint of $I_{x}$ (see
  Section~\ref{sec:pr_Markov_approximants} and
  Figures~\ref{fig:companions_geometric}
  and~\ref{fig:arc_orbit}). Similarly, the rational ideal boundary
  points of $R_{x}'$ are Markov numbers accumulating at the left
  endpoint of $I_{x}$. As accumulations points of Markov numbers, the
  endpoints of $I_{x}$ are irrational (see
  Section~\ref{sec:path_limit}; in fact, it is well known that the
  endpoints are Markov irrationals).

  For every interior point $y$ of $I_{x}$ except $x$, the geodesic
  $\pi(g_{y})$ in $M$ has self-intersections. To see this if $y>x$,
  note that $g_{y}$ intersects $[T]\cdot g_{y}$ (see
  Section~\ref{sec:pf_companions_geom}). The same holds for $y<x$ by
  an analogous argument or by symmetry. Since the endpoints of
  $I_{x}$ are not even rational, this implies that $x$ is the only
  Markov fraction in $I_{x}$.

  In particular, this means that no other Markov fraction is between
  $x$ and one of its companions. Since this is true for all Markov
  fractions, and they accumulate at the endpoints of $I_{x}$, this
  implies that $I_{x}$ does not contain any companions of other Markov
  fractions than $x$.
\end{proof}

\begin{corollary}
  \label{cor:I_x_disjoint}
  For different Markov fractions $x\not=x'$, the interiors 
  $\mathring{I}_{x}$ and $\mathring{I}_{x'}$ are disjoint.
\end{corollary}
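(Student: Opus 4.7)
The plan is to argue by contradiction using Lemma~\ref{lem:Ipq} twice. Suppose $\mathring{I}_x \cap \mathring{I}_{x'} \neq \emptyset$ for Markov fractions $x \neq x'$, and assume without loss of generality $x < x'$. Let $b$ denote the right endpoint of $I_x$ and $a'$ the left endpoint of $I_{x'}$. By Lemma~\ref{lem:Ipq}, $I_x$ contains no Markov fraction other than $x$, so $x' \notin I_x$ and hence $x' > b$. The assumption that the open intervals meet then forces $a' < b < x'$.

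The key observation is that $b$ is an accumulation point of Markov fractions, by the final clause of Lemma~\ref{lem:Ipq}. This accumulation must occur from the right of $b$: indeed, any Markov fraction lying in a small left-neighborhood of $b$ would lie in $I_x$ and therefore, again by Lemma~\ref{lem:Ipq}, would have to equal $x$, but $x < b$. So there exist Markov fractions arbitrarily close to $b$ and strictly greater than $b$. Since $b < x'$, I can select such a Markov fraction $y$ with $b < y < x'$.

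Then $a' < b < y < x'$ places $y$ in the open interval $\mathring{I}_{x'}$ with $y \neq x'$, contradicting Lemma~\ref{lem:Ipq} applied to $x'$. The argument has no real obstacle: the entire proof reduces to recognizing that the accumulation of Markov fractions at an endpoint of $I_x$ is forced to be one-sided, coming from outside $I_x$, a fact that follows immediately from the absence of other Markov fractions inside $I_x$.
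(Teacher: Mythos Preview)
Your proof is correct. The paper states this corollary without an explicit argument, treating it as immediate from Lemma~\ref{lem:Ipq}; you have filled in the details in a perfectly valid way. A marginally more direct variant would use the companions rather than the Markov fractions at the accumulation step: since the right companions $\gamma^{+}_{k}(x)$ form an increasing sequence converging to the right endpoint $b$ of $I_{x}$ from \emph{inside} $I_{x}$, the overlap $(a',b)\subset\mathring{I}_{x'}$ already contains $\gamma^{+}_{k}(x)$ for all large $k$, directly contradicting the clause of Lemma~\ref{lem:Ipq} that $I_{x'}$ contains no companions of Markov fractions other than $x'$. This sidesteps the need to argue that the accumulation of Markov fractions at $b$ is one-sided, but your route is equally sound.
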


\begin{lemma}
  \label{lem:Ipq_cover}
  As $x$ ranges over the set of Markov fractions, the intervals
  $I_{x}$ cover the set $\Q$ of rational numbers.
\end{lemma}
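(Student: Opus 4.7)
The plan is to argue by contradiction, using an infinite descent through the forest of centered rational Markov triples given by Lemma~\ref{lem:children_parents}. Suppose $y \in \Q$ lies in no $I_x$. By the $\Z$-affine invariance of the set of Markov fractions and of the correspondence $x \mapsto I_x$, I may assume $y \in (0,1)$. If $y$ is itself a Markov fraction then $y \in I_y$ trivially; otherwise I would start at the centered singular triple $(\tfrac{0}{1}, \tfrac{1}{2}, \tfrac{1}{1})$ and recursively pass to the unique child whose closed interval contains~$y$. Since $y$ is not a Markov fraction, $y$ is never equal to the middle fraction of a triple, so the descent produces an infinite sequence $(a_k, b_k, c_k)$ of centered rational Markov triples with nested intervals $[a_k, c_k] \ni y$, and under the contradiction hypothesis $y \notin I_{a_k} \cup I_{b_k} \cup I_{c_k}$ for every $k$.

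Next I would analyse the combinatorial type of the resulting infinite path in the binary tree. If the path contains infinitely many left turns and infinitely many right turns, then both $a_k$ and $c_k$ converge to a common limit $L$, which by Section~\ref{sec:inf_paths_cont} is irrational (in fact with Lagrange number $\tfrac{1}{3}$). But $y \in [a_k, c_k]$ for every $k$ forces $y = L$, contradicting $y \in \Q$.

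The remaining case is that the path eventually turns always in one direction, say always left from some step $N$ onward. Then $a_k$ stabilises to a Markov fraction $a = a_N$ while $c_k$ decreases to some limit $L_+$. The decisive claim is
\begin{equation*}
  L_+ \;=\; a \,+\, \delta_{q_a},
\end{equation*}
i.e., $L_+$ coincides with the right endpoint of $I_a$. Once this is established, $y \in (a, L_+) \subset I_a$ contradicts $y \notin I_a$; the always-right case is symmetric.

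The main obstacle is proving this identification. Letting $u_k$ denote the denominator of $b_{N+k}$, the L-child formula~\eqref{eq:pqprime} combined with Markov's equation~\eqref{eq:Markov} applied to the denominator triples yields the linear recurrence $u_{k+1} = 3\,q_a\,u_k - u_{k-1}$, identical to~\eqref{eq:un}. Equation~\eqref{eq:p1p2_alt} gives $b_{N+k} - a = u_{k-1}/(q_a\,u_k)$, and $u_{k-1}/u_k$ converges to the reciprocal of the dominant eigenvalue; a short computation identifies this reciprocal as $\delta_{q_a}$, in agreement with equation~\eqref{eq:T_fp}. Alternatively, the identification can be seen geometrically: the always-left descent corresponds to repeatedly flipping the same edge of an ideal triangulation of $M$, producing Markov fractions whose vertical geodesics spiral into the simple closed geodesic $\eta_a$ paired with $a$, and hence converge to the attracting fixed point $\xi_+ = a + \delta_{q_a}$ of the corresponding translation along $\varrho_a$.
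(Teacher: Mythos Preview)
Your overall strategy matches the paper's proof exactly: descend through the tree of centered rational Markov triples, and split according to whether the path eventually turns in only one direction or alternates infinitely often. Your treatment of the eventually-one-direction case is correct and in fact slightly more explicit than the paper's: where the paper identifies the limit $L_+$ with $\xi_+$ by recognizing the $c_k$ as ideal boundary points of $L_x'$ (invoking the geometric picture of Section~\ref{sec:pr_Markov_approximants}), you compute it directly from the recurrence $u_{k+1} = 3q_a u_k - u_{k-1}$ on the middle denominators. That is a clean alternative.

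There is, however, a genuine gap in your infinitely-many-left-and-right case. You assert that $a_k$ and $c_k$ converge to a \emph{common} limit and cite Section~\ref{sec:inf_paths_cont}. But Sections~\ref{sec:path_limit} and~\ref{sec:inf_paths_cont} are part of the overview; they state without proof that the middle fractions along an infinite path converge. The monotone sequences $(a_k)$ and $(c_k)$ certainly converge individually, but showing that the nested intervals $[a_k,c_k]$ shrink to a point is precisely the substantive content of this case. The paper supplies it via the estimate
\[
  c_k - a_k \;=\; 3 - \frac{q_{k,2}}{q_{k,1}q_{k,3}}
  \;\leq\; 3 - \frac{3}{1 + q_{k,1}^{-2} + q_{k,3}^{-2}},
\]
using that both outer denominators $q_{k,1},q_{k,3}\to\infty$ under the alternating-turns hypothesis. (A simpler bound also works: from~\eqref{eq:Markov_q} and $q_{k,2}\geq q_{k,1},q_{k,3}$ one gets $c_k-a_k \leq q_{k,1}^{-1}+q_{k,3}^{-1}$.) Without some such estimate, you have only $L_-\leq y\leq L_+$ with $L_\pm$ irrational, which is not yet a contradiction. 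You should replace the citation of Section~\ref{sec:inf_paths_cont} by this short computation.
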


\begin{proof}
  Since the intervals obviously cover the set of Markov fractions, let
  us assume that $y\in\Q$ is not a Markov fraction. We have to find a
  Markov fraction~$x$ for which $y\in I_{x}$.

  To this end, define the infinite sequence of centered Markov triples
  \begin{equation*}
    m_{k}\;=\;
    \left(\,
      \frac{p_{k,1}}{q_{k,1}},\;
      \frac{p_{k,2}}{q_{k,2}},\;
      \frac{p_{k,3}}{q_{k,3}}\,
    \right)
  \end{equation*}
  with
  \begin{equation}
    \label{eq:mk_nesting}
    \frac{p_{k,1}}{q_{k,1}}
    \;<\; y \;<\;
    \frac{p_{k,3}}{q_{k,3}}
  \end{equation}
  recursively by
  \medskip%
  \begin{compactitem}
  \item $m_{0}
    \;=\;
    \left(\,
      \lfloor y\rfloor,\;
      \lfloor y\rfloor+\tfrac{1}{2},\;
      \lfloor y\rfloor+1\,
    \right)$, where $\lfloor y\rfloor$ denotes the largest integer not
    larger than $y$, and
    \medskip
  \item for $k>0$, the triple $m_{k}$ is the child
    satisfying~\eqref{eq:mk_nesting} among the two children of
    $m_{k-1}$ in the tree of centered rational Markov triples (see
    Lemma~\ref{lem:children_parents}).
  \end{compactitem}

  \medskip\noindent{}%
  First consider the following two cases:
  \begin{compactenum}[1.]
  \medskip
  \item The triple $m_{k}$ is the left child of $m_{k-1}$ for all but
    finitely many $k$.

    \medskip%
    That is, the sequence of Markov fractions
    $\frac{p_{k,1}}{q_{k,1}}$ eventually becomes constant, say
    \begin{equation*}
      \frac{p_{k,1}}{q_{k,1}}\;=\;x
      \quad\text{for}\quad k\;>\;k_{0}\,,
    \end{equation*}
    while $\frac{p_{k,2}}{q_{k,2}}$ and $\frac{p_{k,3}}{q_{k,3}}$ are
    consecutive ideal boundary points of $L_{x}'$ with
    \begin{equation*}
      \lim\,\frac{p_{k,2}}{q_{k,2}}
      \;=\;
      \lim\,\frac{p_{k,3}}{q_{k,3}}
      \;=\;\xi_{+}
    \end{equation*}
    (see Section~\ref{sec:pr_Markov_approximants} and
    Figures~\ref{fig:companions_geometric}
    and~\ref{fig:arc_orbit}). Hence $y\in I_{x}$

    \medskip
  \item The triple $m_{k}$ is the right child of $m_{k-1}$ for all but
    finitely many $k$.

    \medskip%
    In this case,
    \begin{equation*}
      \frac{p_{k,3}}{q_{k,3}}\;=\;x
      \quad\text{for}\quad k\;>\;k_{0}\,,
    \end{equation*}
    and like in case 1 we see that $y\in I_{x}$.
  \end{compactenum}

\medskip\noindent%
We will complete the proof of the lemma by showing that the third case
does not occur:
  \begin{compactenum}[1.]

    \medskip{}
  \item[3.] Infinitely many of the triples $m_{k}$ are left children
    of $m_{k-1}$ and infinitely many are right children.
  \end{compactenum}

  \medskip\noindent%
  For such a sequence of rational Markov triples, we will show that
  \begin{equation}
    \label{eq:markov_squeeze}
    \lim
    \left(
      \frac{p_{k,3}}{q_{k,3}}
      \,-\,
      \frac{p_{k,1}}{q_{k,1}}
    \right)
    \;=\;
    0\,.
  \end{equation}
  Then condition~\eqref{eq:mk_nesting} would imply
  \begin{equation*}
    \lim\,\frac{p_{k,1}}{q_{k,1}}
    \;=\;
    y
    \;=\;
    \lim\,\frac{p_{k,3}}{q_{k,3}}\,.
  \end{equation*}
  This would contradict $y$ being rational, because accumulation
  points of the set of Markov fractions are irrational (see
  Section~\ref{sec:path_limit}).

  It remains to show~\eqref{eq:markov_squeeze} in case 3. To this end,
  note that for any rational Markov triple 
  $\big(\frac{p_{1}}{q_{1}},\frac{p_{2}}{q_{2}},\frac{p_{3}}{q_{3}}\big)$
  we have
  \begin{equation*}
    \begin{split}
      \frac{p_{3}}{q_{3}}\,-\,\frac{p_{1}}{q_{1}}
      \quad&=\quad
      \left(
        \frac{p_{3}}{q_{3}} \,-\, \frac{p_{2}}{q_{2}}
      \right)
      \,+\,
      \left(
        \frac{p_{2}}{q_{2}} \,-\, \frac{p_{1}}{q_{1}}
      \right)\\
      &\underset{\makebox[0pt]{\scriptsize\eqref{eq:p2p3}}}{\overset{\makebox[0pt]{\scriptsize\eqref{eq:p1p2}}}{=}}\quad
      \frac{q_{1}}{q_{2}\,q_{3}}\,+\,\frac{q_{3}}{q_{1}\,q_{2}}\\
      &\overset{\makebox[0pt]{\scriptsize\eqref{eq:Markov_q}}}{=}\quad
      3\,-\,
      \frac{q_{2}}{q_{3}\,q_{1}}\,,
    \end{split}
  \end{equation*}
  so it is enough to show
  \begin{equation}
    \label{eq:lim_q2_q3q1}
    \lim\,\frac{q_{k,2}}{q_{k,3}\,q_{k,1}}\;=\;3.
  \end{equation}
  To see this, we will show that for any centered rational Markov
  triple
  $\big(\frac{p_{1}}{q_{1}},\frac{p_{2}}{q_{2}},\frac{p_{3}}{q_{3}}\big)$
  we have
  \begin{equation}
    \label{eq:q2_q3q1_ineq}
    \frac{q_{2}}{q_{3}\,q_{1}}\;\geq\;
    \frac{3}{
      1
      \,+\,
      \frac{1}{q_{1}^{2}}
      \,+\,
      \frac{1}{q_{3}^{2}}
    }\,.
  \end{equation}
  Since both sequences $(q_{k,1})$ and $(q_{k,3})$ are unbounded in
  case 3, equation~\eqref{eq:lim_q2_q3q1} follows.
  
  Finally, to prove the estimate~\eqref{eq:q2_q3q1_ineq} for a
  centered rational Markov triple, note that~\eqref{eq:centered}
  implies
  \begin{equation*}
    \frac{q_{2}}{q_{3}\,q_{1}}
    \;\geq\;
    \max\,
    \left\{\,
      \frac{q_{1}}{q_{2}\,q_{3}}\,,\,
      \frac{q_{3}}{q_{1}\,q_{2}}\,
    \right\}\,,
  \end{equation*}
  and with equation~\eqref{eq:Markov_q},
  \begin{equation*}
    \frac{q_{2}}{q_{3}\,q_{1}}\;\geq\;1\,,
  \end{equation*}
  Hence $\frac{q_{2}}{q_{1}}\geq q_{3}$ and therefore
  \begin{equation*}
    \frac{q_{1}}{q_{2}\,q_{3}}
    \;=\;
    \frac{q_{2}}{q_{3}\,q_{1}}
    \cdot \frac{q_{1}^{2}}{q_{2}^{2}}
    \;\leq\;
    \frac{q_{2}}{q_{3}\,q_{1}}\cdot\frac{1}{q_{3}^{2}}\,.
  \end{equation*}
  In the same way we get $\frac{q_{2}}{q_{3}}\geq q_{1}$ and
  \begin{equation*}
    \frac{q_{3}}{q_{1}\,q_{2}}
    \;\leq\;
    \frac{q_{2}}{q_{3}\,q_{1}}\cdot\frac{1}{q_{1}^{2}}\,.
  \end{equation*}
  Now equation~\eqref{eq:Markov_q} yields
  \begin{equation*}
    3\;\leq\;
    \frac{q_{2}}{q_{3}\,q_{1}}
    \,
    \left(
      \frac{1}{q_{3}^{2}}
      \,+\,1\,+\,
      \frac{1}{q_{1}^{2}}
    \right)
  \end{equation*}
  and hence~\eqref{eq:q2_q3q1_ineq}. This completes the proof of
  Lemma~\ref{lem:Ipq_cover}.
\end{proof}

\begin{remark}
  \label{rem:mcshane}
  McShane's identity~\cite{mcshane91} says that
  \begin{equation}
    \label{eq:mcshane}
    \sum_{\gamma}\frac{1}{1\,+\,e^{|\gamma|}}
    \;=\;
    \frac{1}{2}\,,
  \end{equation}
  where the sum is taken over all simple closed geodesics in some
  hyperbolic torus with one cusp, and $|\gamma|$ denotes the length of
  the geodesic. For the modular torus $M$, this identity can be
  derived as follows. The unoriented closed geodesics in~$M$ are in
  one-to-one correspondence with the
  $(\operatorname{mod} 3)$-equivalence classes of Markov fractions.
  The length of the closed geodesic $\eta_{x}$ and the
  length of the interval $I_{x}$ are related by
  \begin{equation}
    \label{eq:eta_I_lengths}
    \frac{1}{1\,+\,e^{|\gamma|}}
    \;=\;
    \tfrac{1}{6}\;|\,I_{x}\,|
  \end{equation}
  (see Remark~\ref{rem:len_closed_geodesic}). On the other hand,
  Corollary~\ref{cor:I_x_disjoint} and Lemma~\ref{lem:Ipq_cover} imply
  \begin{equation}
    \label{eq:mcshane_I}
    2\,\sum_{x}|\,I_{x}\,|\;=\;6\,
  \end{equation}
  which yields McShane's identity.
\end{remark}

\subsection{Proof of Theorem~\ref{thm:classify} (Classification)}
\label{sec:pf_classify}

Since we have already proved
Theorems~\ref{thm:Markov_best_approximants}
and~\ref{thm:companion_approximation} in the two previous sections, we
only have to show that $C(y)<\frac{1}{3}$ if $y$ is a rational number
that is neither a Markov fraction nor a companion of a Markov
fraction.

By Lemma~\ref{lem:Ipq_cover} there is a Markov fraction $x$ such that
$y\in I_{x}$. Since we assume that $y$ is not a Markov fraction or a
companion, Lemma~\ref{lem:Ipq} implies that~$y$ lies between two
companions of $x$ or between $x$ and one of its first companions. In
other words, there is an integer $k\geq 1$ such that
\begin{equation*}
  \gamma^{-}_{k+1}(x)\;<\;y\;<\;\gamma^{-}_{k}(x)
  \qquad\text{or}\qquad
  \gamma^{+}_{k}(x)\;<\;y\;<\;\gamma^{+}_{k+1}(x)\,.
\end{equation*}
We will show that there is not enough space between the Ford circles
at $\gamma^{\pm}_{k}(x)$ and $\gamma^{\pm}_{k+1}(x)$ such that a
vertical geodesic could pass between them and stay at least a signed
distance of \,$\log\frac{2}{3}$\, away from both. With
Corollary~\ref{cor:C_geometric}, this shows that $C(y)<\frac{1}{3}$.

We already know that
\begin{equation}
  \label{eq:companion_gap2}
  \big|\,
    \gamma^{\pm}_{k}(x)
    \,-\,
    \gamma^{\pm}_{k+1}(x)\,
  \big|
  \;=\;
  \frac{1}{q\,u_{k}\,u_{k+1}}\,,
\end{equation}
and since the denominator a companion $\gamma^{\pm}_{k}(x)$ is
$qu_{k}$, the euclidean radius of its Ford circle is
\begin{equation*}
\frac{1}{2\,q^{2}\,u_{k}^{2}}
\end{equation*}
(see equation~\eqref{eq:companion_gap},
Lemma~\ref{lem:companion_reduced}, and
Section~\ref{sec:overview_hyperbolic}). A necessary condition for
$C(y)\geq\frac{1}{3}$ is that the vertical geodesic $g_{y}$ fits
between the Ford circles at $\gamma^{\pm}_{k}(x)$ and
$\gamma^{\pm}_{k+1}(x)$, scaled by $\frac{2}{3}$ in the euclidean
metric (see Corollary~\ref{cor:C_geometric} and
Figure~\ref{fig:ford}). Therefore, to prove that $C(y)<\frac{1}{3}$, it
suffices to show that
\begin{equation}
  \label{eq:gap_too_small}
  \big|\,
  \gamma^{\pm}_{k+1}(x)
  \,-\,
  \gamma^{\pm}_{k}(x)\,
  \big|
  \;<\;
  \frac{1}{3\,q^{2}\,u_{k+1}^{2}}
  \,+\,
  \frac{1}{3\,q^{2}\,u_{k}^{2}}\,.
\end{equation}
And indeed we have
\begin{equation*}
  \begin{split}
    \frac{1}{3\,q^{2}\,u_{k+1}^{2}} \,+\,
    \frac{1}{3\,q^{2}\,u_{k}^{2}} \;\;&=\;\;
    \frac{u_{k}^{2}\,+\,u_{k+1}^{2}}{3\,q^{2}\,u_{k}^{2}\,u_{k+1}^{2}}\\
    &\overset{\makebox[0pt]{\scriptsize\eqref{eq:u_conserved}}}{=}\;\;
    \frac{3\,q\,u_{k}\,u_{k+1}\,+\,1}{3\,q^{2}\,u_{k}^{2}\,u_{k+1}^{2}}\\
    &=\;\;
    \frac{1}{q\,u_{k}\,u_{k+1}}
    \,+\,\frac{1}{3\,q^{2}\,u_{k}^{2}\,u_{k+1}^{2}}\\
    &\overset{\makebox[0pt]{\scriptsize\eqref{eq:companion_gap2}}}{>}\;\;
    \big|\,
    \gamma^{\pm}_{k}(x)
    \,-\,
    \gamma^{\pm}_{k+1}(x)\,
    \big|\,.
  \end{split}
\end{equation*}

\vspace{-1.5\baselineskip}\qed

\section{Coda: Triangle paths}
\label{sec:triangle_paths}

\subsection{Triangle paths for Markov fractions}
\label{sec:triangle_paths_markov}

The Markov fraction $\mu_{\frac{n}{m}}$ defined in
Section~\ref{sec:labeling} can be computed from its
label~$\frac{n}{m}$ by a Fibonacci-like recursion with Farey addition
along triangle paths (see Figure~\ref{fig:snake_Markov}).
\begin{figure}[p]
  \input{snake_markov_fig}  
  \caption[]{Triangle paths of the Markov fractions $\mu_{\frac{n}{m}}$
    with $1\leq n<m\leq 5$.}
  \label{fig:snake_Markov}
\end{figure}
This construction, which is described more precisely in the following
theorem, extends Propp's snake graph construction for Markov
numbers~\cite{Propp}.

\begin{theorem}[Triangle paths for Markov fractions]
  \label{thm:snake_markov}
  Let $m\geq 0$ and $n\geq 0$ be coprime integers, let
  $z_{m,n}=m+n\,\omega$ with $\omega=\frac{1}{2}\,(1+i\sqrt{3})$,
  and let~$\alpha$ be the oriented straight line segment from~$0$ to
  $z_{m,n}$. Let $\tau_{0}\;=\;[0,\;1,\;\omega]$, and unless $(m,n)$
  equals $(1,0)$ or $(0,1)$, let
  \begin{equation*}
    \tau_{1}\;=\;[1,\;\omega,\;1+\omega],
    \quad\ldots,\quad
    \tau_{N}=[z_{m,n}-1,\;z_{m,n}-\omega,\; z_{m,n}]
  \end{equation*}
  be the finite sequence of triangles in the Eisenstein lattice
  $\Z+\omega\,\Z$ that~$\alpha$ intersects after
  $\tau_{0}$. Recursively label the vertices of these triangles as
  follows:
  \begin{compactitem}
  \item Label the vertices $0$, $1$, and $\omega$ of $\tau_{0}$ with
    $\frac{1}{0}$, $\frac{0}{1}$, and $\frac{1}{1}$, respectively.
  \item If the vertices of $\tau_{0},\ldots,\tau_{j-1}$ are already
    labeled, label the remaining vertex of triangle $\tau_{j}$ with
    $\frac{p_{1}+p_{2}}{q_{1}+q_{2}}$, where $\frac{p_{1}}{q_{1}}$
    and $\frac{p_{2}}{q_{2}}$ are the labels at the vertices shared
    with triangle $\tau_{j-1}$.
  \end{compactitem}

  \medskip\noindent%
  Then the label at $z_{m,n}$ is the Markov fraction $\mu_{\frac{n}{m}}$.
\end{theorem}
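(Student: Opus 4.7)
My plan is to proceed by induction on $m+n$. The base cases $(m,n)\in\{(1,0),(0,1),(1,1)\}$ reduce to direct computation against the initial labels: for $(m,n)=(1,1)$, the segment $\alpha$ traverses $\tau_0$ and then $\tau_1=[1,\omega,1+\omega]$, so the new vertex $1+\omega=z_{1,1}$ receives the Farey sum $\tfrac{0+1}{1+1}=\tfrac{1}{2}=\mu_{\frac{1}{1}}$, which agrees with the table of values given after~\eqref{eq:mu}.

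For the inductive step, the first task is combinatorial: I would show that the triangle path $\tau_0,\tau_1,\ldots,\tau_N$ encodes the Stern--Brocot descent of~$\tfrac{n}{m}$. At each $\tau_j$ with $j\geq 1$, the segment~$\alpha$ enters through the edge shared with $\tau_{j-1}$ and exits through one of the other two edges, and this binary choice encodes a word in $\{L,R\}$. Comparing the slope $\tfrac{n}{m}$ with the subdivision induced by each $\tau_j$ shows that this word is precisely the standard Stern--Brocot descent to $\tfrac{n}{m}$ in the extended Stern--Brocot tree from the root edge. Via the combinatorial isomorphism $\mu$ between the extended Stern--Brocot tree and the tree of Markov fractions in $[0,1]$, the same word traces out the path from the root to the node~$\mu_{\frac{n}{m}}$.

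The more delicate task is to reconcile the Farey-sum labeling with the Markov fractions. The two generating rules---Farey addition versus~\eqref{eq:pqprime}---look algebraically unrelated, and intermediate labels along the triangle path need not be Markov fractions at all (for instance $\tfrac{1}{3}$ appears at the non-coprime vertex $z_{2,0}=2$). So the agreement at coprime vertices is a nontrivial numerical identity rather than a direct comparison of recursions. I would prove it by strengthening the inductive hypothesis with an auxiliary decoration along the path, in the spirit of Propp's snake graph construction~\cite{Propp}: decorate each $\tau_j$ with a pair of matrices in $\GLTZ$ whose product records both the Farey-sum computation and the corresponding partial Markov-fraction data, so that the Farey-sum update at each new vertex becomes left multiplication by an elementary $\GLTZ$-matrix, while the Markov-fraction data evolves by~\eqref{eq:pqprime} precisely at the coprime checkpoints $z_{m,n}$.

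The principal obstacle lies here: the decoration must remember where we are in the Markov-fraction tree descent---information not visible from the Farey-sum labels alone---so the core of the argument reduces to a combinatorial identity between two matrix products along the path, one realising the Farey-sum labeling and one performing the Markov-fraction descent. Once this identity is established, evaluating both products at the endpoint $z_{m,n}$ yields $\mu_{\frac{n}{m}}$ and closes the induction.
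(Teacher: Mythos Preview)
Your approach is genuinely different from the paper's, and the contrast is instructive. You set up an induction on $m+n$ and aim to prove a combinatorial matrix identity linking the Farey-sum recursion to the Markov-fraction recursion~\eqref{eq:pqprime}. You correctly identify this identity as the ``principal obstacle'', and you leave it unproved: the decoration you describe is only a strategy, and you do not exhibit the matrices or verify that the two products agree at coprime checkpoints. So as written, this is a plan rather than a proof.

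The paper's argument dissolves your principal obstacle rather than confronting it. The point is that the Farey-sum labeling is not an ad hoc recursion to be matched against~\eqref{eq:pqprime}; it \emph{is} the Farey triangulation of~$H^{2}$. Concretely, the Epstein--Penner construction equips the modular torus with a euclidean metric in which the canonical ideal triangulation becomes the equilateral triangulation of~$\C$ by the Eisenstein lattice, and the developing map $D:H^{2}\to\C\setminus\Gamma$ sends the ideal triangle $\infty,0,1$ to $0,1,\omega$. Under~$D$, the sequence of Farey triangles crossed by the vertical geodesic~$g_{x}$ becomes exactly the sequence $\tau_{0},\ldots,\tau_{N}$ crossed by the segment~$\alpha$, and the Farey-sum rule is nothing more than the mediant rule for adjacent Farey triangles. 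So the label at~$z_{m,n}$ is simply the ideal endpoint~$x$ of~$g_{x}$. That~$x$ is the Markov fraction~$\mu_{\frac{n}{m}}$ then follows from the geometric characterisation of Markov fractions (Corollary~\ref{cor:Markov_frac_geom}): the segment from~$0$ to the primitive lattice point~$z_{m,n}$ projects to a simple curve in~$M$, hence~$\pi(g_{x})$ is simple with both ends in the cusp, and the correspondence with the Stern--Brocot label~$\frac{n}{m}$ comes from matching homotopy classes.

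What your approach would buy, if completed, is a self-contained combinatorial proof independent of the hyperbolic machinery; what the paper's approach buys is that the ``nontrivial numerical identity'' you anticipate never needs to be written down, because the two recursions are literally the same recursion viewed through~$D$.
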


The companions of a Markov fraction can be obtained by a similar
construction, which will be described in the following section. Then,
in Section~\ref{sec:proof_triangle_paths}, both constructions will be
derived by following geodesics in the Farey triangulation.

\begin{remark}[Triangle paths and hyperbolic geodesics]
  In Figure~\ref{fig:snake_Markov}, the point $0$
  (labeled~$\frac{1}{0}$) is connected to the points $z_{m,n}$
  (labeled with the Markov fractions~$\mu_{\frac{n}{m}}$) not by
  straight line segments as described in
  Theorem~\ref{thm:snake_markov}, but by polygonal curves that are
  determined as follows. Define a hyperbolic metric on each triangle
  strip by equipping each triangle with the hyperbolic Beltrami--Klein
  metric defined by its circumcircle~\cite[Ch.~5.1]{BPS}. In this
  metric, the polygonal curves shown in Figures~\ref{fig:snake_Markov}
  and~\ref{fig:snake_companion} are hyperbolic geodesics connecting
  their ideal endpoints. We use this construction because it works
  equally well for companions (see Figure~\ref{fig:snake_companion}),
  whereas the straight line segments would have vertices in their
  interior. More importantly, Markov fractions and their companions
  correspond to certain geodesics in the modular torus with both ends
  in the cusp, and the polygonal curves shown in
  Figures~\ref{fig:snake_Markov} and~\ref{fig:snake_companion} are
  natural representations of these geodesics in the following sense:
  The canonical ideal triangulation of the modular torus induces a
  euclidean metric on it~\cite{Epstein_Penner}, and if one uses this
  euclidean metric to lift the hyperbolic geodesics to the euclidean
  plane, one obtains the polygonal curves shown in the figures.
\end{remark}

\subsection{Triangle paths for companions}

The left and right companions of a Markov fraction can also be found
by a Fibonacci-like recursion along triangle paths (see
Figure~\ref{fig:snake_companion}).
\begin{figure}[t]
  \input{snake_companion_fig}
  \caption[]{(a) Triangle paths for $\mu_{\frac{1}{2}}=\frac{2}{5}$ and the
    first and second right companions
    $\gamma^{+}_{2}(\frac{2}{5})=\frac{31}{75}$ and
    $\gamma^{+}_{3}(\frac{2}{5})=\frac{463}{1120}$.
    (b) Same for the first and second left companions
    $\gamma^{-}_{2}(\frac{2}{5})=\frac{29}{75}$ and
    $\gamma^{-}_{3}(\frac{2}{5})=\frac{433}{1120}$.}
  \label{fig:snake_companion}
\end{figure}
Roughly speaking, while the Markov fractions are obtained from
triangle paths of primitive lattice vectors, the companions are
obtained from triangle paths of imprimitive lattice vectors. To avoid
lattice points in their interior, the paths are bent slightly to
the the left or to the right, which leads to right or left companions,
respectively. More precisely:

\begin{theorem}[Triangle paths for companions]
  \label{thm:snake_companion}
  With $m$, $n$, $\omega$, $z_{m,n}$\,, and $\alpha$ as in
  Theorem~\ref{thm:snake_markov}, let $k$ be an integer $\geq 2$, so
  that the oriented line segment $k\alpha$ from~$0$ to
  $k\,z_{m,n}=z_{km,kn}$ contains $k-1$ vertices of the Eisenstein
  lattice in its interior. Bend this line segment slightly to the left
  or to the right while keeping the endpoints fixed to obtain an
  oriented curve $\beta^{+}$ or $\beta^{-}$, respectively, that avoids
  lattice points in the interior. Or, to be more precise and more
  definite, let $\beta^{\pm}$ be the curve
  \begin{equation*}
    \beta^{\pm}:\;[0,1]\rightarrow\C,\quad
    \beta^{\pm}(t)\;=\;(t\,\pm\,i\,\epsilon\,t\,(1-t))\,k\,z_{m,n},
  \end{equation*}
  where $\epsilon>0$ is chosen small enough so that there are no
  lattice points between the line segment~$k\alpha$ and $\beta^{\pm}$
  or in the interior of $\beta^{\pm}$. As in
  Theorem~\ref{thm:snake_markov}, label the vertices $0$, $1$, and
  $\omega$ with $\frac{1}{0}$, $\frac{0}{1}$, and $\frac{1}{1}$,
  respectively, and label the vertices of the triangles
  $ \tilde\tau^{\pm}_{0},\,\ldots,\,\tilde\tau^{\pm}_{N}$ crossed by
  $\beta^{\pm}$ recursively, so that in each triangle of the sequence
  the as yet unlabeled vertex is labeled with the Farey sum of the
  already labeled vertices.

  \medskip\noindent%
  Then the label at $k\,z_{m,n}$ is $\gamma^{\pm}_{k}\big(\frac{p}{q}\big)$,
  the $(k-1)$st left or right companion of $\frac{p}{q}=\mu_{\frac{n}{m}}$.
\end{theorem}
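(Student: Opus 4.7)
The plan is to extend the proof of Theorem \ref{thm:snake_markov} using the same dictionary between the Eisenstein lattice triangulation and the Farey tessellation. As indicated in the remark preceding the theorem, the canonical ideal triangulation of the modular torus carries a Euclidean structure (via the Epstein--Penner construction), and developing a hyperbolic geodesic in the modular torus through this Euclidean structure produces precisely the polygonal curves in the Eisenstein lattice shown in Figure \ref{fig:snake_companion}. Under the corresponding identification of the Eisenstein triangulation with a lift of the Farey triangulation, the Farey-sum labeling rule is exactly the rule for assigning ideal vertices of adjacent Farey triangles, so the labels generated by the algorithm are determined by the combinatorial type of the triangle path.

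First I would recall from Section \ref{sec:geometric_pov} (and the proof of Theorem \ref{thm:classify}) the geometric characterization of companions: the Markov fraction $\mu_{n/m} = p/q$ corresponds to the simple cusp-to-cusp geodesic $\gamma_{m,n}$ in the modular torus, and its companions $\gamma^{\pm}_k(p/q)$ correspond to the non-simple cusp-to-cusp geodesics that wind $k$ times along $\gamma_{m,n}$ without intersecting the disjoint pair consisting of $\gamma_{m,n}$ and the associated simple closed geodesic. In the Eisenstein-lattice picture, winding $k$ times is represented by replacing the primitive vector $z_{m,n}$ with $k z_{m,n}$. The straight segment $k\alpha$ would pass through the interior lattice points $z_{m,n},\,2z_{m,n},\ldots,(k-1)z_{m,n}$, which all project to the cusp of the modular torus; this precisely records the fact that the geodesic $k\alpha$ decomposes into $k$ copies of $\gamma_{m,n}$ meeting at the cusp and hence is not one of the companion geodesics.

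Next I would show that the bent curves $\beta^{\pm}$ represent the companion geodesics. For sufficiently small $\epsilon > 0$, the curve $\beta^{\pm}$ misses every interior lattice point and crosses exactly one of the two triangles adjacent to the segment $k\alpha$ at each passage near such a point. In the modular torus, this is exactly the perturbation that pushes the concatenated geodesic off the cusp, either to the left or to the right of $\gamma_{m,n}$, thereby producing the unique non-simple cusp-to-cusp geodesic homotopic rel cusp to the chosen side of the $k$-fold concatenation. By the characterization recalled above, this geodesic is $\gamma^{+}_k(p/q)$ or $\gamma^{-}_k(p/q)$, with the sign determined by the side of $\gamma_{m,n}$ on which the perturbation lies; the symmetry \eqref{eq:companions_symmetric} together with the side convention fixed by $\epsilon > 0$ identifies $\beta^{+}$ with right companions and $\beta^{-}$ with left companions.

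Finally, since the labels at $0$, $1$, and $\omega$ agree with the Farey cusps of the initial triangle $\tau_0$ in the Farey tessellation, and the Farey-sum rule propagates labels correctly across every triangle flip in the Farey triangulation, induction on the length of the triangle path $\tilde\tau^{\pm}_0,\ldots,\tilde\tau^{\pm}_N$ shows that the label assigned to $k z_{m,n}$ equals the ideal endpoint of the developed geodesic, which is $\gamma^{\pm}_k(p/q)$. The main obstacle is verifying that the combinatorial sequence of triangles crossed by $\beta^{\pm}$ in the Eisenstein lattice coincides exactly with the sequence of Farey triangles crossed by the corresponding hyperbolic geodesic in the modular torus's universal cover; this requires checking that the Euclidean-to-hyperbolic developing map is compatible with the triangulation not only on $\tau_0$ but across the bent portions of $\beta^{\pm}$, which in turn hinges on the estimate that for small $\epsilon$ no additional triangles are swept and no interior lattice point is encircled.
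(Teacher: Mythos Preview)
Your proposal is correct and follows essentially the same route as the paper: both arguments use the developing map $D:H^{2}\to\C\setminus\Gamma$ from the Epstein--Penner Euclidean structure to identify the Eisenstein triangle strip with a strip of Farey triangles, then invoke the geometric characterization of companions (Lemma~\ref{lem:companions_geom}) by observing that the bent curves $\beta^{\pm}$ project to curves in $M$ that avoid both $\pi(g_{x})$ and the simple closed geodesic~$\eta$. The paper's own proof is in fact much terser than yours---it dispatches the theorem in a single sentence after proving Theorem~\ref{thm:snake_markov}---so your more explicit treatment of the sign convention and the Farey-sum induction is a welcome elaboration rather than a departure.
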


Like the analogous construction for Markov fractions (see
Theorem~\ref{thm:snake_markov}), this is derived by following
geodesics in the Farey triangulation (see
Section~\ref{sec:proof_triangle_paths}).

\subsection{Proof of Theorems~\ref{thm:snake_markov}
  and~\ref{thm:snake_companion}}
\label{sec:proof_triangle_paths}

The Epstein--Penner convex hull construction~\cite{Epstein_Penner}
provides the modular torus~$M$ with a euclidean metric, with respect
to which the ideal hyperbolic triangles of the canonical triangulation
are equilateral euclidean triangles. We may assume that the developing
map $D:H^{2}\rightarrow\C$ maps the ideal triangle $\infty,0,1$ to the
euclidean triangle $0,1,\omega$ with $\omega$ as in
Theorem~\ref{thm:snake_markov}. The image of $D$ is then the
complement $\C\setminus\Gamma$ of the Eisenstein lattice
$\Gamma\;=\;\Z\,+\,\omega\Z$. The canonical projection $\pi$ factors
through $\C\setminus\Gamma$, giving rise to the projection
$\bar{\pi}$:
\begin{equation*}
  \begin{tikzcd}
    H^2 \arrow[r, "D"]\arrow[d, "\pi"] & \C\setminus\Gamma \arrow[dl, "\bar{\pi}"]\\
    M & 
  \end{tikzcd}
\end{equation*}

If $m$ and $n$ are coprime integers, then the projection $\bar{\pi}$
maps the oriented line segment from $0$ to $z_{m,n}=m+n\omega$ to a
simple closed curve in $M$. There is a unique homotopic simple
geodesic in $M$ with both ends in the cusp (under homotopy with ends
in the cusp), and all simple geodesics with both ends in the cusp can
be obtained in this way. If $m$ and $n$ are both nonnegative, the
geodesic in $M$ lifts to a vertical geodesic $g_{x}$ in $H^{2}$,
oriented from $\sminfty$ to $0$, where $x$ is a Markov fraction
between $0$ and $1$. As the geodesic $g_{x}$ traverses finitely many
triangles of the Farey triangulation, the new vertices are obtained by
Farey addition. But the developing map $D$ maps the triangles of the
Farey triangulation that $g_{y}$ traverses to the regular euclidean
triangles that the line segment from $0$ to $z_{m,n}$
traverses. Hence, the Markov fraction $x$ can also be obtained by
Farey addition along the euclidean triangle strip. This proves
Theorem~\ref{thm:snake_markov}.

To prove Theorem~\ref{thm:snake_companion} in a similar fashion, note
that $\bar{\pi}$ projects the deformed straight line segments to
curves in $M$ with both ends in the cusp that cross neither the
respective simple geodesic with both ends in the cusp nor the
corresponding simple closed geodesic.

\paragraph{Acknowledgment.}
I would like to thank Gergely Harcos, who graciously made me aware of
the previous results by Flahive~\cite{gbur78} and
Gurwood~\cite{gurwood76}.

\smallskip\noindent%
This research was funded by the Deutsche Forschungsgemeinschaft (DFG - German Research Foundation) - Project-ID 195170736 - TRR109

\begingroup
\let\OLDthebibliography\thebibliography
\renewcommand\thebibliography[1]{
  \OLDthebibliography{#1}
  \setlength{\parskip}{0pt}
  \setlength{\itemsep}{0.5ex plus 0.2ex}
}
\bibliographystyle{abbrv}
\bibliography{fractions}
\endgroup

\vspace{\baselineskip}
\noindent%
Boris Springborn\ \;\nolinkurl{<boris.springborn@tu-berlin.de>}\\[\baselineskip]
Technische Universit\"at Berlin\\
Institut f\"ur Mathematik, MA 8-3\\
Str.~des 17.~Juni 136\\
10623 Berlin, Germany

\end{document}